\def\disp{\displaystyle}
\def\dref#1{(\ref{#1})}
\def\crr{\cr\noalign{\vskip2mm}}
\def\dfrac{\displaystyle\frac}
\numberwithin{equation}{section}
\newtheorem{theorem}{Theorem}[section]
\newtheorem{proposition}{Proposition}[section]
\newtheorem{lemma}{Lemma}[section]
\newtheorem{remark}{Remark}[section]
\newcommand{\Ascr} {{\cal A}}
\newcommand{\Bscr} {{\cal B}}
\newcommand{\Dscr} {{\cal D}}
\newcommand{\Hscr} {{\cal H}}
\newcommand{\Lscr} {{\cal L}}
\newcommand{\Nscr} {{\cal N}}
\newcommand{\Xscr} {{\cal X}}
\newcommand{\aline}  {{\mathbb A}}
\newcommand{\bline}  {{\mathbb B}}
\renewcommand{\cline}{{\mathbb C}}
\newcommand{\fline}  {{\mathbb F}}
\renewcommand{\hline}{{\mathbb H}}
\newcommand{\pline}  {{\mathbb P}}
\newcommand{\rline}  {{\mathbb R}}
\newcommand{\GGG}    {{\mathbf G}}
\newcommand{\rarrow} {\mathop{\rightarrow}}
\newcommand{\half}   {{\frac{1}{2}}}
\newcommand{\e}      {{\varepsilon}}
\renewcommand{\l}    {{\lambda}}
\renewcommand{\Re}{{\rm Re\,}}
\newcommand{\dd}  {{\rm d}\hbox{\hskip 0.5pt}}
\newcommand{\FORALL} {{\hbox{$\hskip 11mm \forall \;$}}}
\newcommand{\mm}     {{\hbox{\hskip 0.5pt}}}
\newcommand{\m}      {{\hbox{\hskip 1pt}}}
\newcommand{\nm}     {{\hbox{\hskip -3pt}}}
\newcommand{\bluff}  {{\hbox{\raise 15pt \hbox{\mm}}}}
\newcommand{\sbluff} {{\hbox{\raise  9pt \hbox{\mm}}}}
\newcommand{\bbm}[1]{\left[\begin{matrix} #1 \end{matrix}\right]}
\newcommand{\sbm}[1]{\left[\begin{smallmatrix} #1
   \end{smallmatrix}\right]}
\begin{document}
\renewcommand{\thefootnote}{\fnsymbol{footnote}}
\renewcommand{\thefootnote}{\fnsymbol{footnote}}
\newcommand{\footremember}[2]{%
   \footnote{#2}
    \newcounter{#1}
    \setcounter{#1}{\value{footnote}}%
}
\newcommand{\footrecall}[1]{%
    \footnotemark[\value{#1}]%
}
\makeatletter
\def\blfootnote{\gdef\@thefnmark{}\@footnotetext}
\makeatother

\begin{center}
{\LARGE \bf Output feedback exponential stabilization for 1-D \\ 
unstable wave equations with boundary control \\[0.6ex]
 matched disturbance
}\\[4ex]
Hua-Cheng Zhou,  \ \ \ George Weiss\;
\blfootnote{This work was supported by the Israel Science
Foundation under grant 800/14.}
\blfootnote{H.-C. Zhou (hczhou@amss.ac.cn) and G. Weiss
        (gweiss@eng.tau.ac.il) are with the School of Electrical
        Engineering, Tel Aviv University, Ramat Aviv, Israel,
        69978.}
\end{center}
\vspace{3mm}
%
%

\noindent {\bf Abstract:} We study the output feedback exponential
stabilization of a one-dimensional unstable wave equation, where the
boundary input, given by the Neumann trace at one end of the domain,
is the sum of the control input and the total disturbance. The latter
is composed of a nonlinear uncertain feedback term and an external
bounded disturbance. Using the two boundary displacements as output
signals, we design a disturbance estimator that does not use high
gain. It is shown that the disturbance estimator can estimate the
total disturbance in the sense that the estimation error signal is in
$L^2[0,\infty)$. Using the estimated total disturbance, we design an
observer whose state is exponentially convergent to the state of
original system. Finally, we design an observer-based output feedback
stabilizing controller. The total disturbance is approximately
canceled in the feedback loop by its estimate. The closed-loop system
is shown to be exponentially stable while guaranteeing that all the
internal signals are uniformly bounded. \vspace{3mm}

\noindent {\bf Keywords:} Disturbance rejection, output feedback
controller, unstable wave equation, exponential stabilization
\vspace{3mm}

\noindent {\bf AMS subject classifications:} 37L15, 93D15, 93B51,
93B52.

\section{Introduction}

In this paper, we are concerned with the following one-dimensional
wave equation:
\begin{equation} \label{wave-o}
   \left\{\begin{array}{rl} w_{tt}(x,t) \nm &=\m w_{xx}(x,t),\crr
   \disp w_x(0,t) \nm &=\m -qw(0,t),\crr\disp w_x(1,t) &=\m u(t)+
   f(w(\cdot,t), w_t(\cdot,t))+d(t),\crr\disp w(x,0) \nm &=\m w_0(x),
   \ \ \ w_t(x,0) \m=\m w_1(x),\crr\disp y_{m}(t) &=\m (w(0,t)\m,
   w(1,t)), \end{array} \right.
\end{equation}
where $x\in(0,1)$, $t\geq 0$, $(w,w_t)$ is the state, $u$ is the
control input signal, and $y_m$ is the output signal, that is, the
boundary traces $w(0,t)$ and $w(1,t)$ are measured. The equation
containing the constant $q>0$ creates a destabilizing boundary
feedback at $x=0$ that acts like a spring with negative spring
constant. $f:H^1(0,1)\times L^2(0,1)\to\mathbb{R}$ is an unknown
possibly nonlinear mapping that represents the {\em internal
uncertainty} in the model, and $d$ represents the unknown {\em
external disturbance}, which is only supposed to satisfy $d\in
L^\infty[0,\infty)$. For the sake of simplicity, we denote
\begin{equation} \label{Jared}
   F(t) :=\m f(w(\cdot,t),w_t(\cdot,t)) + d(t)
\end{equation}
and we call this signal the {\em total disturbance}. We often write
$\dot w$ instead of $w_t$.

\medskip
\begin{figure}[H]\centering
\centering{}\unitlength 0.6mm
\begin{picture}(170,35)(0,55)
\put(17,83){\line(1,0){136}} 
\put(25,90){\line(0,-1){12}} 
\put(25,78){\line(-5,-1){6}} 
\put(19,76.8){\line(5,-1){12}} 
\put(156,81.5){zero level}
\put(31,74.4){\line(-5,-1){12}} 
\put(19,71.9){\line(5,-1){12}}
\put(31,69.5){\line(-5,-1){12}}
\put(19,67.1){\line(5,-1){12}}
\put(31,64.7){\line(-5,-1){12}}
\put(19,62.3){\line(5,-1){12}}
\put(31,59.8){\line(-4,-1){6}}
\put(25,58.5){\line(0,-1){5}} 
\put(-4,57){negative}
\put(34,57){spring}
\put(17,53.5){\line(1,0){17}} 
\put(21,53.4){\line(-2,-1){6}} 
\put(24,53.4){\line(-2,-1){6}}
\put(27,53.4){\line(-2,-1){6}}
\put(30,53.4){\line(-2,-1){6}}
\put(33,53.4){\line(-2,-1){6}} 
\put(33,68){$q$}
\put(23.5,89.5){$\circ$}
\put(25,91){\line(6,-1){6}} 
\put(33,89){\line(5,-2){6}}
\put(42,85){\line(5,-3){6}} 
\put(50,80){\line(5,-4){6}}
\put(58,74){\line(5,-2){6}}
\put(65,71){\line(1,0){4}}
\put(71,71){\line(5,2){6}}
\put(78,74){\line(5,3){6}}
\put(86,79){\line(5,3){6}}
\put(94,84){\line(5,2){6}}
\put(102,87){\line(5,1){6}}
\put(109.5,88.6){\line(5,0){4}} 
\put(115,88){\line(5,-2){6}}
\put(122,85){\line(5,-3){6}}
\put(129,80){\line(5,-4){6}}
\put(136,74){\line(3,-2){8}} 
\put(142.5,67.2){$\circ$}
{\color{red}
\put(144.3,55){\vector(0,1){12.1}} 
\put(144.15,55){\vector(0,1){12.1}} 
\put(144,55){\vector(0,1){12.1}} 
\put(147,55){$F=f+d$}
\put(144.3,89){\vector(0,-1){18.2}} 
\put(144.15,89){\vector(0,-1){18.2}} 
\put(144,89){\vector(0,-1){18.2}} 
\put(132,91){control force $u$}}
\put(118.5,60){\vector(3,1){21}} 
\put(75,58){\vector(-3,2){45}} 
\put(81,59){dispacement}
\put(78,53){measurements}
\put(75,91){string}
\put(93,91){\vector(2,-1){6}} 
\end{picture} \vspace{2mm}
\caption{\label{un-wave-pic} Our plant, an unstable string system}
\end{figure}
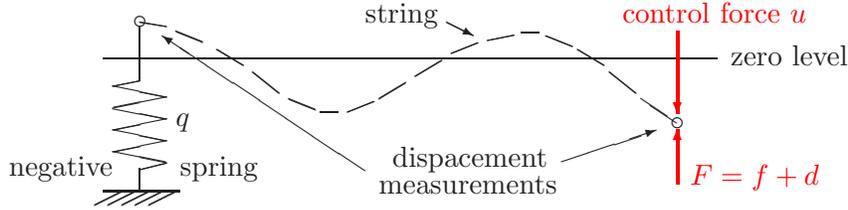 \vspace{-2mm}

We consider system \dref{wave-o} in the state Hilbert space
$\hline=H^1(0,1)\times L^2(0,1)$ with the inner product given by
\vspace{-2mm}
\begin{equation} \label{Eliel}
   \langle(\phi_1,\psi_1),(\phi_2,\psi_2)\rangle_\hline \m=
   \int_0^1[\phi_1'(x)\overline{\phi_2'(x)}+\psi_1(x)\overline{\psi_2
   (x)}]\dd x +\phi_1(0)\overline{\phi_2(0)} \m.
\end{equation}

The objective of this paper is to design a feedback controller which
generates the control signal $u$, using only the measurements $y_m$,
such that the state of the closed-loop system (that includes the state
of the system \dref{wave-o}) converges to zero, exponentially. Later
in the paper we shall also discuss a related problem, where the
negative spring is replaced by a negative damper. More precisely, on
the right hand-side of the equation in \dref{wave-o}) containing $q$,
we have $-qw_t(0,t)$ (instead of $-qw(0,t)$). We shall solve the
exponential stabilization problem also for this alternative nonlinear
wave system \dref{wave-o-US-N}. These results have been announced
(without proof) in the IFAC conference paper
\cite{ZhouWeiss_IFAC:2017}.

For simplicity of implementation, it is desirable to use a small
number of input and output signals for output feedback stabilization.
For the disturbance free situation (that is, $f\equiv 0$ and $d\equiv
0$), the stabilization of the system \dref{wave-o} was first
investigated in \cite{KGBS:08}, who used two measurement signals to
obtain an exponentially stable closed-loop system. Using only one
displacement signal as measurement, strong stability of the closed
loop system was achieved in \cite{GuoGuo2009auto}, using Lyapunov
functionals. In the recent paper \cite{F-Gtac}, the output signal is
only one displacement signal and an exponentially stabilizing
controller is designed by using a new ``backstepping'' method.
However, when the total disturbance $F$ acts at the control end, the
stabilization problem for \dref{wave-o} becomes much more difficult.
Here we present a dynamic compensator which employs a disturbance
estimator described by partial differential equations (PDEs) and full
state feedback based on the observer state. Our compensator consists
of two parts: the first part is to cancel the total disturbance by
applying the active disturbance rejection control (ADRC) strategy,
which is an unconventional design strategy first proposed by Han in
1998 \cite{Han1998}; the second part is to stabilize the system by
using the classic backstepping approach. The stabilization problem of
system \dref{wave-o} has been considered first in \cite{GWGBZ2013tac},
where the vector of output measurement was taken to be $y_m(t)=(w(0,t)
,w_t (1,t))$ and the disturbance has the following form: \vspace{-2mm}
$$ d(t) \m=\m \sum_{j=1}^m [\theta_j\sin\alpha_jt+\vartheta_j\cos
   \alpha_jt],\ \ \ t\geq 0 \m,$$
with known frequencies $\alpha_j$ and unknown amplitudes $\theta_j,
\vartheta_j,\ j=1,2,\ldots m$, and the resulting closed-loop system is
asymptotically stable. Obviously, the disturbance signal in this paper
is more general than the one described above. Recently, the
stabilization problem of system \dref{wave-o} with $f\equiv0$, $d\in
L^\infty[0,\infty)$ has been investigated in \cite{FG2016}, where the
output measurements are $\{w(0,t),w_t(0,t),w(1,t)\}$, their result is
that the closed-loop system is asymptotically stable. The output
feedback of \cite{FG2016} uses one more measurement than
\cite{GWGBZ2013tac}. Apart from the more general external disturbance,
another point that is different here from \cite{GWGBZ2013tac,FG2016}
is that the closed-loop system in this paper is exponentially stable
and we do not require to measure the velocity $w_t(0,t)$ (or
$w_t(1,t)$) which is hard to measure \cite{FansonVmeas}. In this
paper, we only use two scalar signals (the components of $y_m$) and
this is a minimal set of measurement signals. As shown in Figure
\ref{un-wave-pic}, we apply the control force $u$ to deal with both
the internal uncertainty $f$ and the unknown external disturbance $d$.

Many control methods have been applied to deal with uncertainties in
PDE systems. The internal model principle, a classical method to cope
with uncertainty, has been generalized to infinite-dimensional systems
\cite{CIB2000tac,ReWe:2003,PauPoh:14,Nata_etal:14}. In
\cite{ReWe:2003}, the tracking and disturbance rejection problems for
infinite-dimensional linear systems, with reference and disturbance
signals that are finite superpositions of sinusoids, are
considered. The results are applied to some PDEs including the noise
reduction in a structural acoustics model described by a
two-dimensional PDE. An interesting PDE example in \cite{ReWe:2003} is
disturbance rejection in a coupled beam where the disturbance and
control are not matched. Very recently, the backstepping approach has
been used to achieve output regulation for the one-dimensional heat
equation in \cite{Deu2015auto,Deu2015tac}, and the one-dimensional
Schr\"{o}dinger equation in \cite{ZhWe:2016}. For a stochastic PDE, an
optimal control problem constrained by uncertainties in system and
control is addressed in \cite{RW-sto}. An adaptive design is exploited
in \cite{BP-Krstic2014,Krst2010MA} for dealing with the anti-stable
wave equation with unknown anti-damping coefficient. In \cite{GeHeW},
a boundary control based on the Lyapunov method is designed for the
one-dimensional Euler-Bernoulli beam equation with spatial and
boundary disturbances. However, there are not so many works, to the
best of our knowledge, on exponential stabilization (instead of
reference tracking) of PDEs with disturbance by using output
feedback. Sliding mode control that is inherently robust is the most
popular approach that can achieve exponential stability for
infinite-dimensional systems but most often, the literature considers
state feedback controllers \cite{Orlov2011siam, MCheng2011auto,
JFF2013tac,WLRCauto}, while here we aim for output feedback.

Output feedback stabilization for one-dimensional anti-stable wave
equation has been considered in \cite{JFF2015tac}, where a new type of
observer has been constructed by using three output signals to
estimate the state first and then estimate the disturbance via the
state of the observer through an {\em extended state observer} (ESO).
However, the initial state is required to be smooth in
\cite{JFF2015tac} and they obtain asymptotic stability (not
exponential, like here). In the recent paper \cite{FHGBZ} the authors
continue to investigate this question and introduce a new disturbance
estimator which is different from the traditional one, the smoothness
requirement on the initial state being removed. In \cite{FHGBZ}, still
three output signals are used as inputs to the controller and the
controller achieves asymptotic stability of the closed-loop system. In
this paper we consider the output feedback stabilization for a
one-dimensional unstable (or anti-stable) wave equation by using two
signals only, which is an improvement, and in addition we achieve
exponential stability of the state of the controlled original systems,
which is stronger than asymptotic stability.

%
%
%

Define the operators $\aline:\Dscr(\aline)\rarrow\hline$, \
$\bline_1,\bline_2:\cline\rarrow\Dscr(\aline^*)'$ by
$$ \left\{\begin{array}{l} \aline(\phi,\psi) \m=\m (\psi,\phi'')
   \FORALL (\phi,\psi)\in\Dscr(\aline),\crr\disp \Dscr(\aline) =
   \Big\{(\phi,\psi)\in H^2(0,1)\times H^1(0,1)\ |\
   \phi'(0)=\phi(0),\;\phi'(1)=0\Big\},\crr\disp \bline_1 = (0,
   -\delta_0),\ \ \ \ \bline_2=(0,\delta_1), \end{array}\right.$$
where $\delta_a$ is the Dirac pulse at $x=a$, with a suitable
interpretation. It can be shown (see \cite[Example 5.2]
{Nata_etal:16} for details) that $\Dscr(\aline^*)=\Dscr(\aline)$,
$\aline^*=-\aline$ and
\begin{equation}
   \bline_1^*(\phi,\psi) \m=\m -\psi(0),\ \ \ \bline_2^*(\phi,\psi)
   \m=\m \psi(1) \FORALL (\phi,\psi)\in\Dscr(\aline^*).
\end{equation}
We often write a pair $(a,b)$ as a column vector $\sbm{a\\ b}$. The
system \dref{wave-o} can be rewritten as
\begin{equation} \label{Trump}
   \dfrac{\dd}{\dd t}\bbm{w(\cdot,t)\\ w_t(\cdot,t)} =\m \aline\bbm
   {w(\cdot,t)\\ w_t(\cdot,t)} - \bline_1((q+1)w(0,t)) + \bline_2
   \left[ f(\sbm{w(\cdot,t)\\ w_t(\cdot,t)})+u(t)+d(t) \right].
\end{equation}
The equivalence is meant in the algebraic sense, without any reference
to existence or uniqueness of solutions, see Remark 10.1.4 in
\cite{obs_book}. The proof of the equivalence between \dref{wave-o}
and \dref{Trump} uses the theory of boundary control systems in
\cite[Section 10.1] {obs_book}, and the details (for a slightly
different system) are in \cite[Example 5.2]{Nata_etal:16}, where the
notation $B_\Nscr$ and $B$ is used in place of $\bline_1$ and
$\bline_2$ (in this order). About existence and uniqueness of
solutions we have the following proposition, whose proof is given in
the Appendix.

\begin{proposition} \label{Pro01}
The above operator $\aline$ generates a unitary group on $\hline$ and
$\bline_1,\m\bline_2$ are admissible control operators for it. Suppose
that $f:\hline\to\rline$ satisfies a global Lipschitz condition on
$\hline$ and $f(0,0)=0$. Then for any $(w_0,w_1)\in\hline$ and $u,d
\in L^2_{loc}[0,\infty)$, there exists a unique global solution to
{\rm\dref{wave-o}} such that
$(w(\cdot,t),w_t(\cdot,t))\in C(0,\infty;\hline)$.
\end{proposition}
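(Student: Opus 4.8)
The plan is to treat the three assertions in turn, the first two being linear in character and the last requiring a fixed-point argument; the analytically substantive step will be the admissibility of $\bline_1,\bline_2$.

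\emph{Unitary group.} As recorded above (following \cite[Example 5.2]{Nata_etal:16}), $\aline$ is skew-adjoint: $\aline^*=-\aline$ with $\Dscr(\aline^*)=\Dscr(\aline)$. Concretely, for $(\phi,\psi)\in\Dscr(\aline)$ an integration by parts against the inner product \dref{Eliel}, using $\phi'(0)=\phi(0)$ and $\phi'(1)=0$, makes the boundary contributions $-\phi(0)\overline{\psi(0)}$ and $\psi(0)\overline{\phi(0)}$ cancel, so that $\Re\langle\aline(\phi,\psi),(\phi,\psi)\rangle_\hline=0$; surjectivity of $I\pm\aline$ follows by solving the associated two-point boundary value problem. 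Stone's theorem then yields that $\aline$ generates a unitary $C_0$-group on $\hline$, which I denote $e^{\aline t}$.

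\emph{Admissibility of $\bline_1,\bline_2$.} By the duality between admissible control and observation operators it suffices to show that the functionals $\bline_1^*(\phi,\psi)=-\psi(0)$ and $\bline_2^*(\phi,\psi)=\psi(1)$ are admissible observation operators for $e^{\aline t}$; equivalently, there are $T,K>0$ with $\int_0^T(|w_t(0,t)|^2+|w_t(1,t)|^2)\,\dd t\le K\|(w_0,w_1)\|_\hline^2$ for every solution of $w_{tt}=w_{xx}$, $w_x(0,t)=w(0,t)$, $w_x(1,t)=0$. I would prove this boundary-velocity trace estimate by the multiplier method: multiplying the equation by $x\mm w_x$ and by $(x-1)w_x$ and integrating over $(0,1)\times(0,T)$ produces the traces $w_t(1,t)$ and $w_t(0,t)$, which are then dominated by the total energy; since the group is unitary the energy equals $\|(w_0,w_1)\|_\hline^2$, giving the bound. (Alternatively, $\aline$ has compact resolvent with simple, uniformly separated imaginary eigenvalues $i\mu_n$, $\tan\mu_n=1/\mu_n$; one may instead verify the Carleson-type condition $\sup_k\sum_{\mu_n\in[k,k+1]}|\bline_i^*\varphi_n|^2<\infty$ for the normalized eigenvectors $\varphi_n$, a short computation showing $|\bline_i^*\varphi_n|$ stays bounded as $n\to\infty$.)

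\emph{Well-posedness.} First I absorb the boundary term $-\bline_1((q+1)w(0,t))$ in \dref{Trump}. The functional $C_0(\phi,\psi):=\phi(0)$ is \emph{bounded} on $\hline$ (from \dref{Eliel}, $|\phi(0)|\le\|(\phi,\psi)\|_\hline$), so $(q+1)\bline_1 C_0$ has the form ``admissible control operator composed with bounded feedback''; by the Desch--Schappacher admissible-perturbation theorem the operator $\AAA:=\aline-(q+1)\bline_1 C_0$ generates a $C_0$-semigroup $\mathbb{S}$ on $\hline$, for which $\bline_2$ remains admissible. For general $f$ a solution is a fixed point of
\[
   (\Nscr z)(t)=\mathbb{S}_t z_0+\int_0^t\mathbb{S}_{t-s}\bline_2\big[f(z(s))+u(s)+d(s)\big]\,\dd s .
\]
Admissibility of $\bline_2$ gives $\big\|\int_0^t\mathbb{S}_{t-s}\bline_2 v(s)\,\dd s\big\|_\hline\le c(t)\|v\|_{L^2[0,t]}$ with $c$ nondecreasing, and the global Lipschitz bound $|f(z_1)-f(z_2)|\le L\|z_1-z_2\|_\hline$ (with $f(0,0)=0$) yields $\sup_{[0,T]}\|\Nscr z_1-\Nscr z_2\|_\hline\le c(T)L\sqrt{T}\,\sup_{[0,T]}\|z_1-z_2\|_\hline$. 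Choosing $T$ with $c(T)L\sqrt{T}<1$ makes $\Nscr$ a contraction on $C([0,T];\hline)$, so Banach's theorem gives a unique local solution; since this $T$ depends only on $c$ and $L$ and not on $z_0$, the construction iterates over the intervals $[kT,(k+1)T]$, and the linear-growth bound $|f(z)|\le L\|z\|_\hline$ with Gronwall's inequality rules out blow-up, producing the unique global mild solution in $C([0,\infty);\hline)$.

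\emph{Main obstacle.} The one genuinely nontrivial point is the admissibility estimate for $\bline_1,\bline_2$, i.e. the hidden-regularity trace bound for the boundary velocities; once that estimate and the skew-adjointness are established, the generation and fixed-point arguments are routine.
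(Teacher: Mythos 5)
Your proposal is correct, but it follows a genuinely different route from the paper's own proof. The paper keeps \emph{both} inhomogeneous terms of \dref{Trump} --- the destabilizing boundary feedback $-\bline_1((q+1)w(0,t))$ \emph{and} the nonlinearity $\bline_2 f$ --- inside a single fixed-point map $\fline$ on $C(0,T;\hline)$, built directly from the unitary group $e^{\aline t}$, and obtains a contraction for \emph{every} $T>0$ in one stroke by using the weighted Bielecki norm \dref{norm-def}, $\|(\phi,\psi)\|_*=\sup_{t\in[0,T]}e^{-\l t}\|(\phi(\cdot,t),\psi(\cdot,t))\|_\hline$, choosing $\l>\half[C_{1T}(q+1)^2+C_{2T}L^2]$; global existence then needs no iteration and no blow-up argument. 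You instead first absorb the linear boundary term into the generator, $\AAA=\aline-(q+1)\bline_1 C_0$, via the Desch--Schappacher perturbation theorem, and then run an unweighted small-time contraction for the remaining Lipschitz nonlinearity, iterating over intervals of uniform length (legitimate, since the global Lipschitz constant makes the step length independent of the initial state; the extra Gronwall remark is then redundant). Your route is more modular and conceptually matches the linear-plus-perturbation structure used later in the paper, but it costs two nontrivial pieces of perturbation theory --- generation by $\AAA$ \emph{and} the preservation of admissibility of $\bline_2$ for the perturbed semigroup, which you assert but which also requires justification (e.g.\ from well-posed feedback theory) --- whereas the paper's weighted-norm trick needs only admissibility for $e^{\aline t}$ and Banach's theorem. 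Conversely, you supply something the paper outsources: a sketch (multiplier method, or the Carleson-type eigenvector criterion with $\tan\mu_n=1/\mu_n$) of the admissibility of $\bline_1,\bline_2$, which the paper simply quotes from \cite{Nata_etal:16}; your duality reduction to observation operators is fine here because the group is unitary, so admissibility for $e^{-\aline t}$ and for $e^{\aline t}$ coincide.
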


The paper is organized as follows: We consider the exponential
stabilization of the unstable wave equation \dref{wave-o} in Sections
\ref{Sec-dis-design} to \ref{Sec-closed-U-N}. More precisely, in
Section \ref{Sec-dis-design} we desgin an infinite-dimensional total
disturbance estimator that does not use high gain, for the system
\dref{wave-o}. We propose a state observer based on this estimator
and develop an output feedback stabilizing controller by compensating
the total disturbance in Section \ref{Sec-stateobserver-U-N}. The
exponential stability of the resulting closed-loop system for
\dref{wave-o} is proved in Section \ref{Sec-closed-U-N}. Section
\ref{Sec-US-N} is devoted to the output feedback exponential
stabilization of the alternative anti-stable wave equation mentioned
earlier (with the negative damper).

\section{Disturbance estimator design} \label{Sec-dis-design}

In this section, our objective is to design a total disturbance
estimator using the input and output signals of the system
\dref{wave-o}.

\begin{remark} \label{exam-ode} {\rm
We explain the need for a disturbance estimator on a simple
finite dimensional example. Let $A\in\rline^{n\times n}$, $B\in
\rline^n$. Consider the system \vspace{-1mm}
\begin{equation}
   \dot{x}(t) \m=\m Ax(t)+Bd(t)
\end{equation}
where $x(t)\in\rline^n$ is the state trajectory at time $t$ and
$d(t)\in\rline$ is the disturbance signal at time $t$. Suppose that
$A$ is stable (Hurwitz). The solution is given by
$$x(t) - e^{At}x(0) \m=\m \int_0^t e^{A(t-s)}d(s) \dd s
   \m=\m e^{A\frac{t}{2}} \int_0^{\frac{t}{2}} e^{A(\frac{t}{2}-s)}
   d(s) \dd s + \int_{\frac{t}{2}}^t e^{A(t-s)} d(s) \dd s \m.$$
From here, it is easy to verify that $x(t)\to 0$ as $t\to\infty$ if
$d\in L^2[0,\infty)$. Therefore, to design a stabilizing control law
for $\dot{x}(t)=Ax(t)+B[u(t)+d(t)]$, it suffices to find a control
law that generates $u$ such that $u+d\in L^2[0,\infty)$.}
\end{remark}

For many boundary control systems, the control operator $B$ is
unbounded but admissible for the underlying operator semigroup. For
more on the admissibility concept we refer for instance to
\cite{obs_book}. When $x$ takes values in a Hilbert space $X$, $A$
generates an exponentially stable operator semigroup on $X$ and $B$
is admissible, we still have a stability result similar to Remark
\ref{exam-ode}, see the following lemma. For related results see
\cite{LogemannTownley97siam,Jacob2016}. As is customary, we denote by
$X_{-1}$ the dual of $\Dscr(A^*)$ with respect to the pivot space $X$,
see \cite{obs_book}.

\begin{lemma} \label{Lem-ABu}
Let $A$ be the generator of an exponentially stable operator semigroup
$e^{At}$ on the Hilbert space $X$. Assume that $B_i\in\Lscr(U_i,
X_{-1})$, $i=1,2,\ldots\m n$ are admissible control operators for
$e^{At}$ ($U_i$ are Hilbert spaces). Then the initial value problem
\vspace{-2mm}
$$ \dot{x}(t) \m=\m Ax(t)+\sum_{i=1}^nB_iu_i(t),\ \ \ x(0)=x_0,\ \ \
   u_i\in L^2_{loc}([0,\infty),U_i),\vspace{-1mm}$$
admits  a unique solution $x\in C(0,\infty;X)$, and if $u_i\in
L^\infty([0,\infty),U_i)$, $i=1,2,\ldots\m n$, then $x$ is bounded.
If for each index $i$, either $u_i\in L^2([0,\infty),U_i)$ or
$\lim_{t\to\infty}\|u(t)\|_{U_i}=0$ holds, then $x(t)\rarrow 0$ as
$t\to\infty$. Moreover, if there exist two constants $M_0,\mu_0>0$
such that $\|u\|_{U_i}\leq M_0e^{-\mu_0 t}$, $i=1,2,\ldots\m n$,
then \m $\|x(t)\|_X\leq Me^{-\mu t}$ for some $M,\mu>0$.
\end{lemma}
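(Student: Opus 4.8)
The plan is to work with the mild (variation-of-constants) solution
$$ x(t)\m=\m e^{At}x_0+\sum_{i=1}^n\int_0^t e^{A(t-s)}B_iu_i(s)\,\dd s, $$
and to upgrade the finite-time admissibility of each $B_i$ to a uniform-in-time estimate using the exponential stability of $e^{At}$. Existence, uniqueness and $x\in C(0,\infty;X)$ follow at once from the standard theory of admissible control operators in \cite{obs_book}, since each convolution term lies in $X$ and depends continuously on $t$ whenever $u_i\in L^2_{loc}$. By linearity it then suffices to analyse one index at a time, so I write $B=B_i$, $u=u_i$, $U=U_i$ and $\Phi_t u:=\int_0^t e^{A(t-s)}Bu(s)\,\dd s$, fix $K,\alpha>0$ with $\|e^{At}\|\le Ke^{-\alpha t}$, and let $c_1$ denote a finite-time admissibility constant on $[0,1]$, i.e. $\big\|\int_0^1 e^{A(1-s)}Bv(s)\,\dd s\big\|\le c_1\|v\|_{L^2([0,1],U)}$.

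The step I would establish first, and which I expect to be the main obstacle, is an infinite-time admissibility bound $\|\Phi_t u\|\le C\|u\|_{L^2([0,t],U)}$ with $C$ independent of $t$. The point is that the finite-time estimate applied naively on $[0,t]$ only yields the useless growth $\|u\|_{L^2([0,t])}\sim\sqrt{t}\,\|u\|_\infty$; what repairs this is a decomposition into unit intervals combined with the geometric decay of the semigroup. Concretely, for $t\in[n,n+1)$ and $0\le k\le n-1$ (so that $t-k-1\ge0$) I use
$$ \int_k^{k+1}e^{A(t-s)}Bu(s)\,\dd s\m=\m e^{A(t-k-1)}\phi_k,\qquad \phi_k:=\int_k^{k+1}e^{A(k+1-s)}Bu(s)\,\dd s, $$
where $\|\phi_k\|\le c_1\|u\|_{L^2([k,k+1])}$. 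Bounding $\|e^{A(t-k-1)}\|\le Ke^{-\alpha(t-k-1)}$, applying Cauchy--Schwarz to the sum over $k$, summing the geometric series $\sum_k e^{-2\alpha(t-k-1)}$, and adding the final piece of length $<1$, gives the claimed uniform bound. (This fact may alternatively be quoted: exponential stability plus admissibility yields infinite-time admissibility; see \cite{LogemannTownley97siam,Jacob2016}.)

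All four assertions then follow from this estimate and the $t/2$ splitting of Remark \ref{exam-ode}. For boundedness with $u\in L^\infty$ I reuse the unit-interval decomposition, now with $\|\phi_k\|\le c_1\|u\|_{L^\infty}$ and the summable series $\sum_k e^{-\alpha(t-k-1)}\le(1-e^{-\alpha})^{-1}$, and add $\|e^{At}x_0\|\le K\|x_0\|$. For convergence to zero I split
$$ \Phi_t u\m=\m e^{At/2}\,\Phi_{t/2}u+\int_0^{t/2}e^{A(t/2-\sigma)}Bu(t/2+\sigma)\,\dd\sigma. $$
If $u\in L^2$, the first term is $\le Ke^{-\alpha t/2}\,C\|u\|_{L^2([0,\infty))}\to0$ and the second is $\le C\|u\|_{L^2([t/2,\infty))}\to0$. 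If instead $\|u(t)\|\to0$, I fix $\e>0$, choose $T$ with $\|u(s)\|\le\e$ for $s\ge T$, split $\Phi_t u=e^{A(t-T)}\Phi_T u+\int_T^t e^{A(t-s)}Bu(s)\,\dd s$, observe that the first term tends to $0$ for fixed $T$ while the second is $\le C'\e$ by the $L^\infty$-estimate restricted to $[T,\infty)$, and let $\e\downarrow0$. Since there are finitely many indices and $e^{At}x_0\to0$, the full state converges to zero.

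Finally, for exponential decay assume $\|u(t)\|\le M_0e^{-\mu_0 t}$. The same $t/2$ split gives $\|\Phi_{t/2}u\|\le C\|u\|_{L^2([0,\infty))}\le CM_0/\sqrt{2\mu_0}$, so the first term decays like $e^{-\alpha t/2}$, while $\|u(t/2+\cdot)\|_{L^2([0,t/2])}\le(M_0/\sqrt{2\mu_0})e^{-\mu_0 t/2}$, so the second decays like $e^{-\mu_0 t/2}$; combined with $\|e^{At}x_0\|\le Ke^{-\alpha t}\|x_0\|$ this yields $\|x(t)\|\le Me^{-\mu t}$ with $\mu=\min\{\alpha/2,\mu_0/2\}$. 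Thus the only genuinely delicate points are the infinite-time admissibility bound of the second paragraph and the two-scale $\e$--$T$ argument needed for the case $\|u(t)\|\to0$; everything else is the same decomposition applied with different norms.
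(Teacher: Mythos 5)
Your proposal is correct, and it reaches the same conclusions by a more self-contained route than the paper. The paper's proof has the same skeleton (mild solution from \cite[Proposition 4.2.5]{obs_book}, reduction to a single term $x_i(t)=\int_0^t e^{A(t-s)}B_iu_i(s)\,\dd s$, then tail-splitting), but it obtains the crucial uniform-in-time bounds by citation: \cite[Remark 4.7]{art01} gives $L^\infty$-admissibility and \cite[Remark 2.6]{art01} gives the infinite-time estimates $\|x_i(t)\|\le L_2\|u_i\|_{L^2}$ and $\|x_i(t)\|\le L_1\|u_i\|_{L^\infty}$ with constants independent of $t$. You instead prove these estimates from scratch via the unit-interval decomposition $\int_k^{k+1}e^{A(t-s)}Bu(s)\,\dd s=e^{A(t-k-1)}\phi_k$, Cauchy--Schwarz and the geometric series; this is exactly the standard argument behind the cited results (your gloss over the admissibility constant for intervals of length $<1$ is harmless: pad the input with zeros at the front of $[0,1]$, which is \cite[Proposition 2.3]{art01}, quoted in the paper's appendix). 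The convergence-to-zero step is essentially identical in both proofs (the paper splits at $t_\sigma$ for both hypotheses; you split at $t/2$ for the $L^2$ case and at $T$ for the decay case). The one genuinely different ingredient is the final exponential-decay claim: the paper uses the weighting trick $x_i^\mu(t)=e^{\mu t}x_i(t)$, $u_i^\mu(t)=e^{\mu t}u_i(t)$, observes that $\dot x_i^\mu=(A+\mu I)x_i^\mu+B_iu_i^\mu$ with $A+\mu I$ still exponentially stable and $u_i^\mu$ bounded, and so reduces to the boundedness case already proved; you instead redo the $t/2$ splitting with explicit estimates. The paper's trick is slicker, reuses earlier work, and yields any rate $\mu<\min\{\mu_0,\mu_1\}$, whereas your splitting yields $\mu=\min\{\alpha/2,\mu_0/2\}$ --- weaker by a factor of two, but entirely sufficient for the statement, and with the benefit of fully explicit constants throughout.
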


\begin{proof} Due to the admissibility, by \cite[Proposition 4.2.5.]
{obs_book}, the solution $x$ is a continuous $X$-valued function of
$t$ given by \vspace{-3mm}
$$ \m\ \ x(t) \m=\m e^{At}x_0+\sum_{i=1}^n\int_0^t e^{A(t-s)} B_i
   u_i(s) \dd s \m.\vspace{-1mm}$$
By assumption, there exist constants $M_1,\mu_1>0$ such that $\|
e^{At}\|\leq M_1e^{-\mu_1 t}$ for all $t\geq 0$. Thus, by
superposition, we only have to prove the statements in the lemma for
one of the integral terms in the above sum, $x_i(t)=\int_0^t e^{A
(t-s)}B_i u_i(s)\dd s$ (with $i$ fixed).

Suppose that $u_i\in L^\infty([0,\infty),U_i)$. Since ${B_i}$ is
$L^\infty$-admissible for $e^{At}$ by virtue of \cite[Remark 4.7]
{art01}, it follows from \cite[Remark 2.6]{art01} that there exists a
constant $L_1>0$ independent of $u_i$ and of $t$ such that $x_i$ is
bounded: \m $\|x_i(t)\|_X\leq L_1\|u_i\|_{L^\infty([0,\infty),U_i)}$.

Now suppose that $u_i\in L^2([0,\infty),U_i)$ or $\lim_{t\to\infty}\|u_i
(t)\|_{U_i}=0$. For any $\sigma>0$, there exists $t_\sigma>0$ such
that \vspace{-2mm}
$$ \m\ \ \ \|u_i\|_{L^2([t_\sigma,\infty),U_i)} \m\leq\m \sigma,\
   \mbox{ or }\ \ \ \|u_i\|_{L^\infty([t_\sigma,\infty),U_i)} \m\leq\m
   \sigma \m.$$
If $u_i\in L^2([0,\infty),U_i)$ then it follows from \cite[Remark 2.6]
{art01} that for any $t\geq t_\sigma$, \vspace{-2mm}
\begin{equation} \label{wAd-varep}
   \left\| \int_{t_\sigma}^t e^{A(t-s)}B_i u_i(s)\dd s\right\|_{X}
   \m\leq\m L_2\|u_i\|_{L^2([t_\sigma,\infty),U_i)} \m\leq\m L_2
   \sigma,\vspace{-1mm}
\end{equation}
where $L_2$ is a constant that is independent of $u_i$ and of $t$. If
$\lim_{t\to\infty}\|u_i(t)\|_{U_i}=0$, then by \cite[Remark 2.6]
{art01}, the $L^\infty$-admissibility of $B_i$ implies that
for any $t\geq t_\sigma$, \vspace{-2mm}
\begin{equation} \label{wAd-varep-u0}
   \left\|\int_{t_\sigma}^t e^{{A}(t-s)}B_i u_i(s)\dd s\right\|_X
   \m\leq\m L_1\|u_i\|_{L^\infty([t_\sigma,\infty),U_i)} \m\leq\m L_1
   \sigma \m.\vspace{-1mm}
\end{equation}
Using the exponential stability of $e^{At}$ again, we have that for
any $t\geq t_\sigma$, \vspace{-1mm}
\begin{equation} \label{wAd-varep-ea}
   \|e^{A(t-t_\sigma)} x_i(t_\sigma)\|_X \m\leq\m M_1 e^{-\mu_1
   (t-t_\sigma)} \| x_i(t_\sigma) \|_X \m.
\end{equation}
Since \m $x_i(t)=e^{A(t-t_\sigma)}x_i(t_\sigma)+\int_{t_\sigma}^t
e^{A(t-s)}B_i u_i(s)\dd s$, \m it follows from \dref{wAd-varep} or
\dref{wAd-varep-u0}, and \dref{wAd-varep-ea} that for $t\geq
t_\sigma$, \vspace{-4mm}
$$ \m\ \ \ \|x_i(t)\|_X \m\leq\m M_1 e^{-\mu_1 (t-t_\sigma)} \| x_i
   (t_\sigma)\|_X + \max\{L_1,L_2\}\sigma \m.$$
This shows that $\limsup_{t\to\infty}\|x(t)\|_X\leq\max\{L_1,L_2\}
\sigma$\m. Since $\sigma>0$ was arbitrary, we conclude that the last
limsup is 0, whence $x(t)\to 0$ as $t\to\infty$.

For the last part of the lemma, suppose that there exist $M_0,\mu_0>0$
such that $\|u_i\|_{U_i}\leq M_0e^{-\mu_0t}$. Choose a number
$\mu\in(0,\min\{\mu_0,\mu_1\})$, then $A+\mu I$ still generates an
exponentially stable operator semigroup. Define the functions
$x_i^\mu$ and $u_i^\mu$ by \vspace{-1mm}
$$ x_i^\mu(t) \m=\m e^{\mu t}x_i(t) \m,\qquad u_i^\mu(t) \m=\m
   e^{\mu t}u_i(t) \m,$$
then it is easy to see that the differential equation $\dot x_i^\mu=
(A+\mu I)x_i^\mu+B_i u_i^\mu$ holds. Since $u_i^\mu$ is bounded, by
an argument used at the beginning of this proof (with $x_i^\mu$ and
$u_i^\mu$ in place of $x_i$ and $u_i$), there exists $L_3>0$ such that
\m $\|x_i^\mu(t)\|_X\leq L_3\|u_i^\mu\|_{L^\infty([0,\infty),U_i)}$.
Clearly this implies that $x_i$ tends to zero at the exponential rate
$\mu$. \end{proof}

Now we design a {\em total disturbance estimator} for the system
\dref{wave-o}. This is an infinite dimensional system whose state
consists of the functions $v,v_t,z,z_t,W$ defined on $(0,1)$:
\begin{equation} \label{wave-o-v}
   \left\{\begin{array}{rl} v_{tt}(x,t) \nm &=\m v_{xx}(x,t),\crr
   \disp v_x(0,t) \nm &=\m -qw(0,t)+c_1[v(0,t)-w(0,t)],\ \ \ \
   v_x(1,t) =\m u(t)-W_x(1,t),\crr\disp v(x,0) \nm &=\m v_0(x),\
   \ \ \ v_t(x,0) \m=\m v_1(x),\crr\disp z_{tt}(x,t) \nm &=\m
   z_{xx}(x,t),\crr\disp z_x(0,t) \nm &=\m \frac{c_1}{1-c_0}z(0,t)
   +\frac{c_0}{1-c_0} z_t(0,t),\ \ \ \ z(1,t) =\m -v(1,t)+w(1,t) -
   W(1,t),\crr\disp z(x,0) \nm &=\m z_0(x),\ \ \ \ z_t(x,0) =\m
   z_1(x),\crr\disp W_t(x,t) \nm &=\m -W_x(x,t),\crr\disp W(0,t)
   \nm &=\m -c_0[v(0,t)-w(0,t)],\qquad W(x,0) \m=\m W_0(x),
   \end{array}\right.
\end{equation}
where $c_0$ and $c_1$ are two positive design parameters, $c_0<1$,
$(v_0,v_1,z_0,z_1,W_0)\in\hline^2\times H^1(0,1)$ is the initial state
of the disturbance estimator and its input signals are $u$, $w(0,t)$
and $w(1,t)$. The output of this estimator is $\widehat
F(t)=z_x(1,t)$.

\begin{remark} \label{overview_TDE} {\rm
Before going into the tedious technical details, we give an informal
overview of how the total disturbance estimator \dref{wave-o-v}
works. The ``$(v,W)$-part'' of \dref{wave-o-v} is used to channel the
total disturbance from the original system to an exponentially stable
wave equation with state $(p,p_t)$, where $p=w-v-W$, described in
\dref{wave-o-hatv}. (The equations \dref{wave-o-hatv} contain also a
$W$-part, but from an input-output point of view, this $W$-part is
irrelevant.) The effect of $u$ is cancelled in the estimator, so that
$u$ has no influence on $p$. The wave equation system with state
$(p,p_t)$ has input $F$ and output $p(1,t)$ and it represents from an
input-output view the linear part of the plant and the
``$(v,W)$-part'' of \dref{wave-o-v}, taken together, see Figure 2.
This is a well-posed boundary control system (in the sense of
\cite[Definition 10.1.7]{obs_book}), with a bounded observation
operator, so that for large $\Re s$, its transfer function $\GGG$
satisfies $|\GGG(s)|\leq m (\Re s)^{-\half}$, see for instance
\cite[Proposition 4.4.6]{obs_book}.

The $z$-part of \dref{wave-o-v} is in fact the same boundary control
system as the one just described, but with the roles of input and
output reversed. This would be flow inversion in the sense of
\cite{StWe:04}, except that the $z$-part is ill-posed. Indeed, its
transfer function is $\GGG^{-1}$, and from our estimate on $\GGG$ it
follows that $\GGG^{-1}$ is not proper. Overall, the transfer function
from $F$ to $\widehat F$ is the constant 1. The difference $\widehat F
-F$ depends linearly on the deviation between the initial state of the
$z$-part of \dref{wave-o-v} and the initial state of the $p$-part of
\dref{wave-o-hatv}. Since the $z$-part, in the absence of any input
(i.e., when $p(1,t)\equiv 0$) is exponentially stable, and its
observation operator giving $\widehat F$ is admissible (as we shall
see in Lemma \ref{dis-est}), it follows that $\hat F-F\in L^2[0,
\infty)$. The overall linear system shown in Figure 2 (with input
$(F,u)$ and output $\widehat F$) is well-posed. If $f$ is globally
Lipschitz, then also the overall nonlinear system (with input $(d,u)$
and output $\widehat F$) is well-posed (due to Proposition
\ref{Pro01}).} \end{remark}

\vspace{-5mm}
\begin{center}
\includegraphics[scale=0.34]{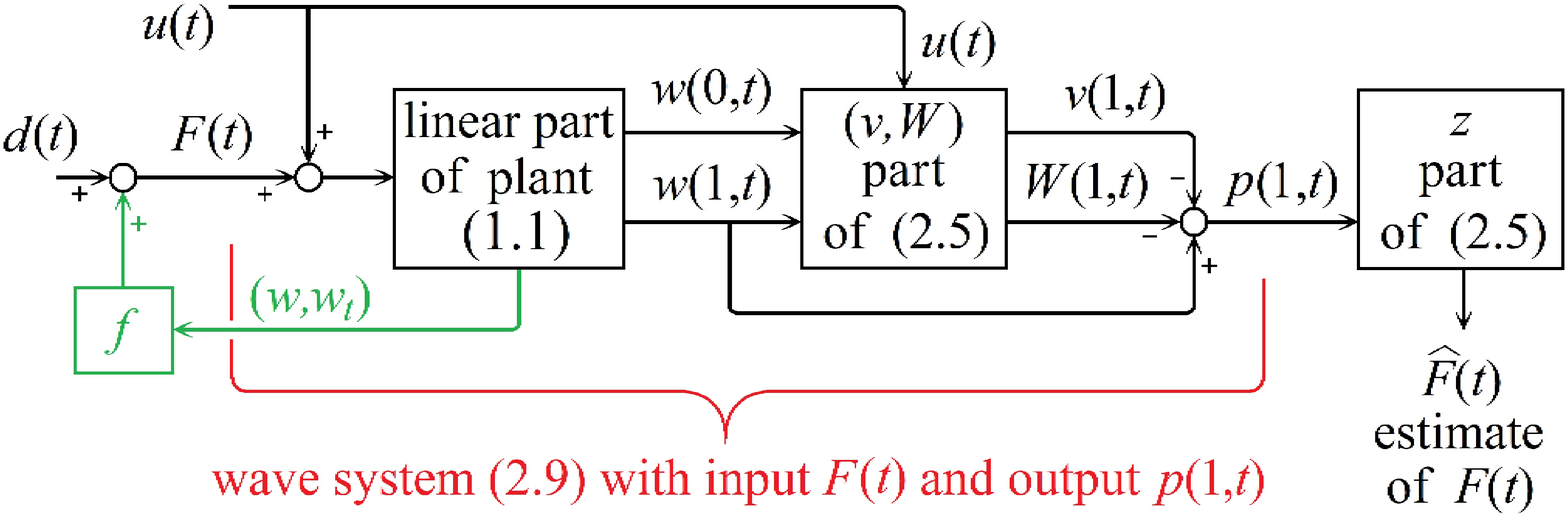}
\end{center} \vspace{-9mm}
{\leftskip 11mm {\rightskip 11mm \noindent
Figure 2. The total disturbance estimator connected to the plant.
The $z$-part of the disturbance estimator \dref{wave-o-v} is the
(ill-posed) flow inverse of the wave system \dref{wave-o-hatv} (which
has input $F$ and output $p(1,t)$). The system with input $(F,u)$ and
output $\widehat F$ is linear and its transfer function is $[1\ 0]$.
\bigskip \par}}

Now we start providing the technical details for the operation of
the total disturbance estimator. Consider the plant \dref{wave-o}
coupled with the estimator \dref{wave-o-v} and denote
\begin{equation} \label{hatv-def}
   \widehat{v}(x,t) \m=\m v(x,t)-w(x,t).
\end{equation}
Then it is easy to verify that the subsystem with state $(\widehat{v}
(x,t),W(x,t))$ satisfies
\begin{equation} \label{wave-o-hatv-P}
   \left\{\begin{array}{l} \widehat{v}_{tt}(x,t) \m=\m \widehat{v}
   _{xx}(x,t),\crr\disp \widehat{v}_x(0,t)=c_1\widehat{v}(0,t),\ \ \
   \widehat{v}_x(1,t)+W_x(1,t) \m=\m -F(t),\crr\disp W_t(x,t) \m=\m
   -W_x(x,t),\ \ \ W(0,t) \m=\m -c_0\widehat{v}(0,t) \m,\end{array}
   \right.
\end{equation}
where $F$ is the total disturbance from \dref{Jared}. It will be
convenient to change variables once more, by introducing the
notation \vspace{-2mm}
\begin{equation} \label{Shavuot}
   p(x,t) \m=\m -\widehat v(x,t) - W(x,t) \m,\qquad \widetilde c_0
   \m=\m \frac{c_0}{1-c_0} \m,\qquad \widetilde c_1 \m=\m
   \frac{c_1}{1-c_0} \m,
\end{equation}
then from the last part of \dref{wave-o-hatv-P} we see that $p(0,t)
=-(1-c_0)\widehat v(0,t)$ and hence (using that $-W_x(0,t)=W_t(0,t)$)
the subsystem with state $(p(\cdot,t),W(\cdot,t))$ is governed by
\begin{equation} \label{wave-o-hatv}
   \left\{\begin{array}{l} p_{tt}(x,t) \m=\m p_{xx}(x,t),\crr\disp
   p_x(0,t) \m=\m \widetilde c_1 p(0,t) + \widetilde c_0 p_t(0,t),\
   \ \ p_x(1,t) \m=\m F(t),\crr\disp W_t(x,t) \m=\m -W_x(x,t),\ \ \
   \ W(0,t) \m=\m \widetilde c_0 p(0,t),\end{array}\right.
\end{equation}
with the initial state $p(x,0)=-\widehat{v}(x,0)-W(x,0)$,
$p_t(x,0)=-\widehat{v}_t(x,0)+W_x(x,0)$. The following lemma states
some stability properties of the system \dref{wave-o-hatv-P}.

\begin{lemma} \label{Lem-hatz-bd}
Suppose that $d\in L^\infty[0,\infty)$ (or $d\in L^2[0,\infty)$),
$f:\hline\to\rline$ is continuous and that {\rm\dref{wave-o}} admits a
unique solution $(w,w_t)\in C(0, \infty;\hline)$ which is bounded.
For any initial state $(\widehat{v}_0,\widehat{v}_1,W_0)\in\hline
\times H^1(0,1)$ with the compatibility condition $W_0(0)=-c_0\widehat
{v}_0(0)$, there exists a unique solution $(\widehat{v},\widehat{v}_t,
W)\in C(0,\infty;\hline \times H^1(0,1))$ to {\rm\dref{wave-o-hatv-P}}
and
\begin{equation} \label{lem-v-W-bd}
   \sup_{t\geq 0}\|(\widehat{v}(\cdot,t),\widehat{v}_t(\cdot,t),W
   (\cdot,t))\|_{\hline\times H^1(0,1)} \m<\m \infty \m.
\end{equation}
If we assume further that $\lim_{t\to\infty}|f(w,w_t)|=0$ and $d\in
L^2[0,\infty)$, then
\begin{equation} \label{lem-v-W-bd0}
   \lim_{t\to\infty} \|(\widehat{v}(\cdot,t),\widehat{v}_t(\cdot,t),
   W(\cdot,t))\|_{\hline\times H^1(0,1)} \m=\m 0 \m.
\end{equation}
If we assume  that $f\equiv 0$ and $d\equiv 0$, then there exist two
constants $M,\mu>0$ such that
\begin{equation} \label{lem-v-W-bd-decay}
   \|(\widehat{v}(\cdot,t),\widehat{v}_t(\cdot,t),W(\cdot,t))\|
   _{\hline\times H^1(0,1)} \m\leq\m Me^{-\mu t}\ \ \ \forall \m
   t\geq 0 \m.
\end{equation}
\end{lemma}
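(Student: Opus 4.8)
The plan is to reduce everything to the cascade system \dref{wave-o-hatv} and then to invoke Lemma \ref{Lem-ABu}. First I would note that the change of variables \dref{Shavuot}, namely $(\widehat v,\widehat v_t,W)\mapsto(p,p_t,W)=(-\widehat v-W,\,-\widehat v_t+W_x,\,W)$, is a bounded and boundedly invertible linear isomorphism of $\hline\times H^1(0,1)$ onto itself (its inverse is $\widehat v=-p-W$, $\widehat v_t=-p_t+W_x$). Consequently the three conclusions \dref{lem-v-W-bd}, \dref{lem-v-W-bd0} and \dref{lem-v-W-bd-decay} for $(\widehat v,\widehat v_t,W)$ are equivalent to the corresponding statements for $(p,p_t,W)$, so it suffices to work with \dref{wave-o-hatv}. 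The compatibility condition $W_0(0)=-c_0\widehat v_0(0)$ translates into $W(0,0)=\widetilde c_0\,p(0,0)$, which is exactly the $W$-boundary condition at $t=0$. The decisive structural feature of \dref{wave-o-hatv} is that it is \emph{triangular}: the pair $(p,p_t)$ evolves autonomously, driven only by the boundary input $F$ at $x=1$, while $W$ is a transport equation slaved to the boundary trace $p(0,\cdot)$.

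Next I would treat the $p$-subsystem abstractly as $\dot X_p=\Ascr_p X_p+\Bscr_p F$ on $\hline$, where $\Ascr_p(p,p_t)=(p_t,p'')$ with domain encoding $p'(0)=\widetilde c_1 p(0)+\widetilde c_0 p_t(0)$ and $p'(1)=0$, and $\Bscr_p$ is the Neumann point-control operator at $x=1$. The key claim is that $\Ascr_p$ generates an \emph{exponentially stable} $C_0$-semigroup. Differentiating the energy $E(t)=\half\int_0^1(p_t^2+p_x^2)\dd x$ along solutions with $F=0$ gives $\dot E=-p_t(0)p_x(0)=-\widetilde c_0 p_t(0)^2-\widetilde c_1 p_t(0)p(0)$, so the modified energy $\widetilde E:=E+\frac{\widetilde c_1}{2}p(0,t)^2$ satisfies $\dot{\widetilde E}=-\widetilde c_0 p_t(0)^2\le0$; since $\widetilde c_1>0$ this $\widetilde E$ is equivalent to the $\hline$-norm of $(p,p_t)$. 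Dissipativity together with the range condition yields generation of a contraction semigroup (Lumer--Phillips) in the norm induced by $\widetilde E$. Exponential stability then follows by a standard multiplier/observability estimate for the one-dimensional damped wave equation, or alternatively by verifying the Gearhart--Pr\"uss resolvent condition on $i\rline$; positivity of the design parameters $c_0\in(0,1)$ and $c_1>0$ (hence $\widetilde c_0,\widetilde c_1>0$) guarantees strict decay. Admissibility of $\Bscr_p$ is already available, since by Remark \ref{overview_TDE} the map $F\mapsto(p,p_t)$ is a well-posed boundary control system.

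I would then exploit that the boundary traces of the $p$-subsystem are admissible outputs. The velocity trace comes for free from the dissipation identity, $\int_0^T p_t(0,s)^2\dd s\le\widetilde c_0^{-1}\widetilde E(0)\lesssim\|X_p(0)\|^2$, and the same holds with exponential weights once $e^{\Ascr_p t}$ is exponentially stable; the displacement trace $p(0,\cdot)$ is controlled similarly, using $p(0,s)=p(0,t)+\int_t^s p_t(0,r)\dd r$. Solving the transport equation explicitly gives $W(x,t)=\widetilde c_0\,p(0,t-x)$ for $t\ge x$ (and $W(x,t)=W_0(x-t)$ for $t<x$), so that $\|W(\cdot,t)\|_{H^1(0,1)}^2=\widetilde c_0^2\int_{t-1}^t\big[p(0,s)^2+p_t(0,s)^2\big]\dd s$ once $t\ge1$. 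Thus the size and decay of $W$ in $H^1$ are governed entirely by the admissible boundary traces of $X_p$, and $W$ inherits whatever boundedness, convergence, or exponential decay $X_p$ enjoys.

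Finally I would assemble the three statements by applying Lemma \ref{Lem-ABu} to $\dot X_p=\Ascr_p X_p+\Bscr_p F$ with $F=f(w,w_t)+d$ split into two input channels. For \dref{lem-v-W-bd}: $f(w,w_t)$ is bounded because $(w,w_t)$ is bounded and $f$ maps bounded sets into bounded sets (in particular when $f$ is Lipschitz, as in Proposition \ref{Pro01}), while $d\in L^\infty$ (resp. $d\in L^2$) produces a bounded state contribution by the $L^\infty$- (resp. $L^2$-) admissibility part of Lemma \ref{Lem-ABu}; boundedness of $W$ then follows from the trace identity above, and transferring back through \dref{Shavuot} gives \dref{lem-v-W-bd}. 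For \dref{lem-v-W-bd0}: the hypotheses $\lim_{t\to\infty}|f(w,w_t)|=0$ and $d\in L^2$ let Lemma \ref{Lem-ABu} force $X_p(t)\to0$, whence the averaged traces over $[t,t+1]$ tend to $0$ and $\|W(\cdot,t)\|_{H^1}\to0$. For \dref{lem-v-W-bd-decay}: when $f\equiv0$ and $d\equiv0$ we have $F\equiv0$, so $X_p$ decays at the semigroup rate and, by the exponentially weighted trace bound, $W$ decays exponentially as well. Well-posedness (existence, uniqueness and continuity of $(\widehat v,\widehat v_t,W)$) follows from admissibility together with $F\in L^2_{\rm loc}$, which holds because $t\mapsto f(w(\cdot,t),w_t(\cdot,t))$ is continuous. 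I expect the main obstacle to be the exponential stability of $\Ascr_p$ --- establishing the strict decay \emph{rate} rather than mere dissipativity --- and the bookkeeping needed to pass that decay to the $H^1$-norm of the transport component $W$ through the admissibility of the boundary traces.
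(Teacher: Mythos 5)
Your overall route coincides with the paper's: pass to the $(p,W)$ variables via \dref{Shavuot}, write the $p$-part abstractly and apply Lemma \ref{Lem-ABu}, solve the transport part explicitly as $W(x,t)=\widetilde c_0\m p(0,t-x)$ for $t\geq x$, and read off the $H^1$-norm of $W$ from the boundary traces $p(0,\cdot)$, $p_t(0,\cdot)$ over $[t-1,t]$. The one place where you genuinely diverge is the justification of those trace estimates, and that is where there is a gap. Your bound $\int_0^T p_t(0,s)^2\dd s\leq\widetilde c_0^{-1}\widetilde E(0)$ comes from the dissipation identity $\dot{\widetilde E}=-\widetilde c_0\,p_t(0,t)^2$, which holds only for the homogeneous system: when the input acts, $\dot{\widetilde E}=-\widetilde c_0\,p_t(0,t)^2+F(t)\m p_t(1,t)$, and the trace $p_t(1,\cdot)$ is not controlled by anything you have established (for $\hline$-valued mild solutions it is not even a pointwise-defined function without a separate hidden-regularity argument). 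What your identity proves is admissibility of the observation operator $(p,p_t)\mapsto p_t(0)$ for $e^{\Ascr t}$, and that only bounds the traces of $e^{\Ascr t}X_p(0)$; it does not bound the traces of $\int_0^t e^{\Ascr(t-s)}\Bscr F(s)\dd s$. For that you would need well-posedness of the input--output map from the Neumann input at $x=1$ to the traces at $x=0$, a strictly stronger property than ``admissible $\Bscr$ plus admissible $C$,'' which you neither state nor prove. Since $F\not\equiv0$ precisely in the two main cases \dref{lem-v-W-bd} and \dref{lem-v-W-bd0} (and even the identity $\|W(\cdot,t)\|_{H^1(0,1)}^2=\widetilde c_0^2\int_{t-1}^t[p(0,s)^2+p_t(0,s)^2]\dd s$ presupposes $p_t(0,\cdot)\in L^2_{loc}$ in order for $W(\cdot,t)$ to lie in $H^1$ at all), your argument as written fully covers only \dref{lem-v-W-bd-decay}, where $f\equiv0$ and $d\equiv0$.

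The paper closes exactly this hole by replacing the boundary dissipation with an interior multiplier: $\rho(t)=2\int_0^1(x-1)p_t(x,t)p_x(x,t)\dd x$ satisfies $|\rho(t)|\leq\|(p(\cdot,t),p_t(\cdot,t))\|_\hline^2$ and $\dot\rho(t)=p_x^2(0,t)+p_t^2(0,t)-\int_0^1[p_x^2+p_t^2]\dd x$; because the weight $x-1$ vanishes at $x=1$, no boundary term involving $F$ or $p_t(1,t)$ ever appears. Integrating over $[t-1,t]$ yields \dref{til-v-bdt-bd}, i.e. $\int_{t-1}^t p_s^2(0,s)\dd s\leq 3\max_{s\in[t-1,t]}\|(p(\cdot,s),p_s(\cdot,s))\|_\hline^2$ along \emph{any} solution, with or without input, and this single estimate is what lets $W$ inherit boundedness, convergence to zero, and exponential decay from $(p,p_t)$ in the three respective cases. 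If you substitute this multiplier estimate for your dissipation-based trace bound (or, alternatively, invoke and prove well-posedness of the map $F\mapsto p_t(0,\cdot)$), the rest of your proposal goes through; your self-contained energy/Lumer--Phillips/Gearhart--Pr\"uss treatment of the exponential stability of $\Ascr$, which the paper simply cites from \cite{GuoXu2007}, is a harmless (if unnecessary) substitution.
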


\begin{proof} We shall use the equivalent system \dref{wave-o-hatv}.
We define the operators $\Ascr$ and $\Bscr$ (that resemble $\aline$
and $\bline_2$ from \dref{Trump}) by
\begin{equation} \label{bbAep-def}
   \left\{\nm\begin{array}{l} \Ascr(\phi,\psi) \m=\m (\psi,\phi'')
   \FORALL (\phi,\psi)\in\Dscr(\Ascr) \m,\qquad \Bscr \m=\m
   (0,\delta_1),\crr\disp \Dscr(\Ascr) = \Big\{(\phi,\psi) \in
   H^2(0,1)\times H^1(0,1)\m|\ \phi'(0)=\widetilde c_1\phi(0) +
   \widetilde c_0 \psi(0) \m,\ \phi'(1)=0\Big\}.\end{array}\right.
\end{equation}
Then the ``$p$-part'' of \dref{wave-o-hatv} can be
written in abstract form as
$$ \dfrac{\dd}{\dd t}\bbm{p(\cdot,t)\\ p_t(\cdot,t)} \m=\m \Ascr
   \bbm{p(\cdot,t)\\ p_t(\cdot,t)} + \Bscr \left[ f(\sbm{w(\cdot,t)\\
   w_t(\cdot,t)}) + d(t) \right].$$
It is well-known \cite[Theorem 2.1]{GuoXu2007} that $\Ascr$ generates
an exponentially stable operator semigroup $e^{\Ascr t}$ on $\hline$
and $\Bscr$ is admissible for $e^{\Ascr t}$. Since $f:\hline\to\rline$
is continuous and $(w,\dot{w})\in C(0,\infty;\hline)$ is bounded, we
have $f(w,\dot w)\in L^\infty[0,\infty)$. Thus, by $d\in L^\infty[0,
\infty)$ or by $d\in L^2[0,\infty)$, it follows from Lemma
\ref{Lem-ABu} that the ``$p$-part'' of \dref{wave-o-hatv} admits a
unique bounded solution, so that there exists a constant $M_1>0$ such
that \vspace{-1mm}
\begin{equation} \label{til-v-bd}
   \sup_{t\geq 0} \|(p(\cdot,t),p_t(\cdot,t))\|_\hline \leq M_1 \m.
\end{equation}

We claim that $\|W(\cdot,t))\|_{H^1(0,1)}$ is uniformly bounded for
all $t\geq 0$. To prove this, first we show that for all $t\geq 1$,
\vspace{-1mm}
\begin{equation} \label{til-v-bdt-bd}
   \int_0^1 p^2_t(0,t-x) \dd x \m\leq\m 3\max_{s\in[t-1,t]}\|
   (p(\cdot,t),p_t(\cdot,t))\|^2_\hline\m.
\end{equation}
Indeed, define \vspace{-2mm}
$$ \rho(t) \m=\m 2\int_0^1 (x-1)p_t(x,t)p_x(x,t) \dd x \m.$$
Then $|\rho(t)|\leq 2\|p_t(\cdot,t)\|_{L^2}\|p_x(\cdot,t)\|_{L^2}
\leq\|(p(\cdot,t),p_t(\cdot,t))\|_\hline^2$. Computing $\frac{\dd}
{\dd t}\rho(t)$ along the solution of the ``$p$-part'' of
\dref{wave-o-hatv}, using that $2\frac{\dd}{\dd t}[p_t p_x]=\frac
{\dd}{\dd x}\left[p_x^2+p_t^2\right]$, yields
$$ \dot{\rho}(t)=p_x^2(0,t)+p_t^2(0,t)-\int_0^1 [p_x^2(x,t)+p_t^2
   (x,t)] \dd x \geq p_t^2(0,t) - \int_0^1 [p_x^2(x,t)+p_t^2(x,t)]
   \dd x,$$
which  implies that, for $t\geq 1$,
\begin{equation} \label{wtv-3norm}
   \int_{t-1}^t p_s^2(0,s) \dd s \m\leq \int_{t-1}^t \|(p(\cdot,
   s),p_s(\cdot,s))\|_\hline^2 \dd x + \rho(t)-\rho(t-1) \leq\m
    3\max_{s\in[t-1,t]}\|(p(\cdot,t),p_t(\cdot,t))\|^2_\hline\m.
\end{equation}
On the other hand, since for any $t\geq 1$, \ $\int_0^1 p^2_t(0,t-x)
\dd x=\int_{t-1}^t p_s^2(0,s)\dd s$, we obtain \dref{til-v-bdt-bd}.
Define the function
\vspace{-2mm}
\begin{equation} \label{W-sol}
   W(x,t) \m=\m \left\{\begin{array}{ll}\disp \widetilde c_0 p(0,
   t-x),& t\geq x,\crr\disp W_0(x-t),& x>t.\end{array}\right.
\vspace{-1mm} \end{equation}
Then a simple computation shows that $W$ solves the ``$W$-part'' of
\dref{wave-o-hatv}. It follows from the Sobolev embedding theorem,
the last part of \dref{wave-o-hatv} and \dref{til-v-bd} that
\vspace{-1mm}
\begin{equation} \label{W-bd0-est}
   |W(0,t)| =\m \widetilde c_0|p(0,t)| \leq \widetilde c_0\|
   p(\cdot,t)\|_{H^1(0,1)} \leq\m \widetilde c_0\|(p(\cdot,t),p_t
   (\cdot,t))\|_\hline \leq \widetilde c_0 M_1 \m.
\end{equation}
From \dref{W-sol} we derive that for $t\geq 1$, \m $\int_0^1 W_x^2
(x,t)\dd x=\widetilde c_0^2\int_0^1 p^2_t(0,t-x)\dd x$. Then the
boundedness of $\|W(\cdot,t)\|_{H^1(0,1)}$ follows from here, using
\dref{til-v-bd}, \dref{W-bd0-est} and \dref{til-v-bdt-bd}.

Since $\widehat{v}(x,t)=-p(x,t)-W(x,t)$ and $W_t(x,t)=-W_x(x,t)$,
we have
$$ \sup_{t\geq 0}\|(\widehat{v}(\cdot,t),\widehat{v}_t(\cdot,t))\|
   _\hline \leq \sup_{t\geq 0} \big[ \|(p(\cdot,t),p_t(\cdot,t))\|
   _\hline + \|(W(\cdot,t),W_x(\cdot,t))\|_\hline \big] \m.$$
This with \dref{til-v-bd} and the boundedness of $\|W(\cdot,t)\|
_{H^1(0,1)}$ implies that \dref{lem-v-W-bd} holds.

Next, suppose that $\lim_{t\to\infty}|f(w,w_t)|=0$ and $d\in L^2[0,
\infty)$. It follows from Lemma \ref{Lem-ABu} that the ``$p$-part''
of \dref{wave-o-hatv} admits a unique solution satisfying
\begin{equation} \label{til-v-bd0}
   \lim_{t\to\infty} \|(p(\cdot,t),p_t(\cdot,t))\|_\hline =\m 0 \m.
   \vspace{-2mm}
\end{equation}
By \dref{til-v-bdt-bd} and \dref{til-v-bd0}, we get $\int_0^1 p_t^2
(0,t-x)\dd x\to 0$ as $t\to\infty$. Then from \vspace{-2mm}
\begin{equation} \label{W-x-est}
   \|W(\cdot,t)\|_{H^1(0,1)}^2 =\m |W(0,t)|^2 + \int_0^1 W_x^2(x,t)
   \dd x \m=\m \widetilde c_0^2 \left[ p^2(0,t) + \int_0^1 p_t^2(0,
   t-x) \dd x \right]
\end{equation}
we see that $\lim_{t\rarrow\infty}\|W(\cdot,t)\|_{H^1(0,1)}=0$.
This, with $\widehat{v}(x,t)=-p(x,t)-W(x,t)$, $W_t(x,t)=-W_x(x,t)$
and \dref{til-v-bd0}, gives \dref{lem-v-W-bd0}.

Next, suppose that $f\equiv 0$ and $d\equiv 0$. Since $\Ascr$
generates an exponentially stable operator semigroup $e^{\Ascr t}$
on $\hline$, there exist two constants $M_3,\mu_3>0$ such that
\begin{equation} \label{p-expdecay}
   \|(p(\cdot,t),p_t(\cdot,t))\|_\hline \m\leq\m M_3e^{-\mu_3t}
   \FORALL t\geq 0 \m.
\end{equation}
Since by \dref{til-v-bdt-bd} and \dref{p-expdecay} we have $\int_0^1
p_t^2(0,t-x)\dd x\leq 3M_3^2e^{-2\mu_3t}$, it follows from
\dref{W-bd0-est}, \dref{W-x-est} and \dref{p-expdecay} that $\|W
(\cdot,t)\|_{H^1(0,1)}$ also converges to zero exponentially.
Combining this with $\widehat{v}(x,t)=-p(x,t)-W(x,t)$ and $W_t(x,t)=
-W_x(x,t)$, we get \dref{lem-v-W-bd-decay}. \end{proof}

To understand that the ``$z$-part'' of \dref{wave-o-v} is used to
invert the system \dref{wave-o-hatv}, denote \vspace{-1mm}
\begin{equation} \label{tildez-def}
   \beta(x,t) \m=\m z(x,t)-p(x,t) \m.
\end{equation}
Still using the notation \dref{Shavuot}, we can see that $\beta(x,
t)$ is governed by
\begin{equation} \label{wave-o-tildz}
   \left\{\begin{array}{l} \beta_{tt}(x,t) \m=\m \beta_{xx}(x,t),
   \crr\disp \beta_x(0,t) \m=\m \widetilde c_1\beta(0,t) +
   \widetilde c_0\beta_t(0,t),\ \ \ \beta(1,t) \m=\m 0 \m.
   \end{array}\right.
\end{equation}
We consider the system \dref{wave-o-tildz} in the energy Hilbert state
space $\hline_0=H^1_R(0,1)\times L^2(0,1)$, where $H^1_R(0,1)=\{\phi
\in H^1(0,1):\phi(1)=0\}$, with the usual inner product from
\dref{Eliel}, so that $\hline_0$ is a closed subspace of $\hline$.
The system \dref{wave-o-tildz} can be rewritten as \vspace{-2mm}
$$ \frac{\dd}{\dd t}(\beta(\cdot,t),\beta_t(\cdot,t)) \m=\m \Ascr_0
   (\beta(\cdot,t),\beta_t(\cdot,t)) \m,$$
where \vspace{-2mm}
\begin{equation} \label{bbA0ep-def}
   \Ascr_0(\phi,\psi) \m=\m (\psi,\phi'') \FORALL (\phi,\psi) \in
   \Dscr(\Ascr_0) \m,\ \ \ \ \ \Dscr(\Ascr_0) \m=\vspace{-2mm}
\end{equation}
$$ \bigg\{(\phi,\psi)\in H^2(0,1)\times
   H^1(0,1)\ \bigg|\ \phi(1)=0,\ \psi(1)=0,\ \phi'(0) = \widetilde
   c_1 \phi(0) + \widetilde c_0\psi(0)\bigg\}.$$

It is well-known (\cite[Theorem 3]{ChengG1979}) that $\Ascr_0$
generates an exponentially stable operator semigroup $e^{\Ascr_0 t}$
on $\hline_0$. Thus, for any initial state $(\beta_0,\beta_1)\in
\hline_0$, \dref{wave-o-tildz} has a unique solution $(\beta(\cdot,
t),\beta_t(\cdot,t))=e^{\Ascr_0 t}(\beta_0,\beta_1)\in C(0,\infty;
\hline_0)$, and this decays exponentially.

\begin{lemma} \label{dis-est}
The observation operator $C:\Dscr(\Ascr_0)\rarrow\rline$ defined by
$C(\beta_0,\beta_1)=(\frac{\dd}{\dd x}\beta_0)(1)$ is admissible for
the operator semigroup $e^{\Ascr_0 t}$.
\end{lemma}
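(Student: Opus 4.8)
The plan is to establish admissibility via a classical multiplier estimate for the wave system \dref{wave-o-tildz}. Recall that $C$ being admissible for $e^{\Ascr_0 t}$ means that the output map $(\beta_0,\beta_1)\mapsto \beta_x(1,\cdot)$, initially defined on $\Dscr(\Ascr_0)$ by $Ce^{\Ascr_0 t}(\beta_0,\beta_1)=\beta_x(1,t)$, extends to a bounded operator from $\hline_0$ into $L^2_{loc}([0,\infty))$; since $e^{\Ascr_0 t}$ is exponentially stable (as noted just before the lemma), this is equivalent to the single infinite-horizon estimate $\int_0^\infty |\beta_x(1,t)|^2\dd t\le K\|(\beta_0,\beta_1)\|_{\hline_0}^2$ for all $(\beta_0,\beta_1)\in\Dscr(\Ascr_0)$, with the estimate then extending to all of $\hline_0$ by density. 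So I would aim directly at this bound, working first with classical solutions $(\beta,\beta_t)\in C([0,\infty);\Dscr(\Ascr_0))\cap C^1([0,\infty);\hline_0)$, for which $\beta_x(1,t)$ is well defined by the $H^2$ trace.

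The key step is a multiplier identity, in the same spirit as the function $\rho$ used in the proof of Lemma \ref{Lem-hatz-bd}, but with the weight chosen to single out the endpoint $x=1$ rather than $x=0$. First I would introduce $\rho(t)=\int_0^1 x\,\beta_x(x,t)\beta_t(x,t)\dd x$ and differentiate along the solution. Using $\beta_{tt}=\beta_{xx}$, the integrand becomes $x\,\frac{\partial}{\partial x}\big[\half(\beta_x^2+\beta_t^2)\big]$, and an integration by parts gives $\dot\rho(t)=\half\beta_x^2(1,t)-\half\int_0^1[\beta_x^2(x,t)+\beta_t^2(x,t)]\dd x$. The crucial simplifications are that the factor $x$ annihilates the boundary contribution at $x=0$, so the Robin-type condition $\beta_x(0)=\widetilde c_1\beta(0)+\widetilde c_0\beta_t(0)$ never enters, while at $x=1$ the condition $\beta(1,t)=0$ forces $\beta_t(1,t)=0$, leaving only $\half\beta_x^2(1,t)$.

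Integrating this identity over $[0,T]$ yields $\half\int_0^T\beta_x^2(1,t)\dd t=\rho(T)-\rho(0)+\half\int_0^T\int_0^1[\beta_x^2+\beta_t^2]\dd x\,\dd t$. Next I would bound the three terms on the right using only the $\hline_0$-norm and exponential stability: $|\rho(t)|\le\half\int_0^1[\beta_x^2+\beta_t^2]\dd x\le\half\|(\beta(\cdot,t),\beta_t(\cdot,t))\|_{\hline_0}^2$, and by exponential stability there are constants $M_0,\mu_0>0$ with $\|(\beta(\cdot,t),\beta_t(\cdot,t))\|_{\hline_0}\le M_0 e^{-\mu_0 t}\|(\beta_0,\beta_1)\|_{\hline_0}$. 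Hence $\rho(T)\to0$ as $T\to\infty$, $|\rho(0)|\le\half\|(\beta_0,\beta_1)\|_{\hline_0}^2$, and $\half\int_0^\infty\int_0^1[\beta_x^2+\beta_t^2]\dd x\,\dd t\le\frac{M_0^2}{2}\|(\beta_0,\beta_1)\|_{\hline_0}^2\int_0^\infty e^{-2\mu_0 t}\dd t$. Letting $T\to\infty$ produces $\int_0^\infty|\beta_x(1,t)|^2\dd t\le K\|(\beta_0,\beta_1)\|_{\hline_0}^2$, which is the desired estimate.

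The calculation itself is light; the part to handle with care is the regularity and density bookkeeping. The multiplier computation is legitimate only for $(\beta_0,\beta_1)\in\Dscr(\Ascr_0)$ (so that $\beta\in C([0,\infty);H^2(0,1))$ and the trace $\beta_x(1,t)$ together with all boundary terms are meaningful), after which the uniform bound lets the output map extend continuously to $\hline_0$; this is exactly the statement of admissibility. I would also verify that, as expected, the argument uses nothing about the signs of $\widetilde c_0,\widetilde c_1$ beyond the already-established exponential stability of $e^{\Ascr_0 t}$, so no separate energy estimate is strictly needed — although monotonicity of the natural energy $\half\int_0^1[\beta_x^2+\beta_t^2]\dd x+\half\widetilde c_1\beta^2(0,t)$ would give finite-horizon admissibility directly, as an alternative route.
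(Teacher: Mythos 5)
Your proof is correct, and it takes a genuinely different route from the paper's. The paper proves the lemma with no computation at all: it introduces the generator $\Ascr_1$ of the wave semigroup with Dirichlet conditions at both ends, on $H^1_0(0,1)\times L^2(0,1)$, quotes the known admissibility of the Neumann trace at $x=1$ for $e^{\Ascr_1 t}$ (\cite[Proposition 6.2.1]{obs_book}), and then invokes finite propagation speed: for $t\in[0,\half]$ the output $\beta_x(1,t)$ cannot feel the boundary condition at $x=0$, so $Ce^{\Ascr_0 t}$ and $Ce^{\Ascr_1 t}$ produce the same output on $[0,\half]$ for common initial data, and admissibility (a finite-time property) transfers from $\Ascr_1$ to $\Ascr_0$. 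Your argument is instead the classical sideways-multiplier estimate: the weight $x$ in $\rho(t)=\int_0^1 x\,\beta_x\beta_t\,\dd x$ annihilates the boundary term at $x=0$ (so $\widetilde c_0,\widetilde c_1$ never enter the identity), the condition $\beta(1,t)=0$ kills $\beta_t(1,t)$, and the resulting identity together with the exponential stability of $e^{\Ascr_0 t}$ (established just before the lemma) yields the infinite-horizon bound $\int_0^\infty\beta_x^2(1,t)\,\dd t\le K\|(\beta_0,\beta_1)\|_{\hline_0}^2$ on $\Dscr(\Ascr_0)$; this is admissibility and a bit more, since you obtain directly the $L^2[0,\infty)$ statement that the paper only derives afterwards (Remark \ref{Rem-k-est-d}) by combining admissibility with exponential stability. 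As for what each route buys: the paper's proof is shorter modulo the cited textbook result, and it never uses any stability of $e^{\Ascr_0 t}$, so it transfers verbatim to arbitrary left boundary conditions; yours is self-contained, produces an explicit constant, and sidesteps a delicate point in the paper's argument --- the claim that $\Dscr(\Ascr_0)\cap\Dscr(\Ascr_1)$ is dense in $\hline_0$ is problematic as stated, because first components of that intersection vanish at $x=0$ and this condition survives $H^1$-limits, so the comparison argument really needs an extra cutoff/localization step (again via finite propagation speed) that your direct computation avoids entirely. One small inaccuracy at the end of your write-up: monotonicity of the energy would not give admissibility ``directly,'' since it says nothing about the trace at $x=1$; its only role would be to replace exponential stability in closing the very same multiplier estimate, giving a finite-horizon bound.
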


\begin{proof} Consider the semigroup generator $\Ascr_1$ on $X=H^1
_0(0,1)\times L^2(0,1)$ by the same formula as $\Ascr_0$, but with
domain $\Dscr(\Ascr_1)=[H^2(0,1)\cap H^1_0(0,1)]\times H^1_0(0,1)$. It
is well-known that $C$ is admissible for $e^{\Ascr_1 t}$, see for
instance \cite[Proposition 6.2.1]{obs_book}. Take $(\beta_0,\beta_1)
\in\Dscr(\Ascr_0)\cap\Dscr(\Ascr_1)$, which is dense in $\hline_0$.
By the result just mentioned, the function $y:[0,\half]\rarrow\rline$
defined by $y(t)=Ce^{\Ascr_1 t}(\beta_0,\beta_1)$ is in $L^2[0,
\half]$ and there is a $k\geq 0$ (independent of $(\beta_0,\beta_1)$)
such that $\|y\|_{L^2}\leq k\|(\beta_0,\beta_1)\|_X$. Notice that
$\|(\beta_0,\beta_1)\|_X=\|(\beta_0,\beta_1)\|_{\hline_0}$. Because
information in solutions of the wave equation propagates with speed at
most 1, the left boundary condition has no influence on $y$, so that
we have $y(t)=Ce^{\Ascr_0 t}(\beta_0,\beta_1)$. This fact, together
with our estimate on $\|y\|_{L^2}$, proves that $C$ is admissible also
for $\Ascr_0$. \end{proof}

\begin{remark} \label{Rem-k-est-d} {\rm
Since $C$ is admissible for $e^{\Ascr_0 t}$ and this operator
semigroup is exponentially stable, it follows (see \cite[Remark 4.3.5]
{obs_book}) that the function $y(t)=Ce^{\Ascr_0 t}(\beta_0,\beta_1)$
is in $L^2[0,\infty)$, for any $(\beta_0,\beta_1)\in\hline_0$. In
terms of solutions of \dref{wave-o-tildz}, $y(t)=\beta_x(1,t)$. From
\dref{tildez-def} $\beta_x(1,t)=z_x(1,t)-p_x(1,t)$. Now using the
third equation in \dref{wave-o-hatv}, we get $\beta_x(1,t)=\widehat
F(t)-F(t)$. Thus, $\widehat F$ can be regarded as an estimate of $F$,
because $\widehat F-F\in L^2[0,\infty)$.}
\end{remark}

\section{Controller and observer design}\label{Sec-stateobserver-U-N}

In this section, based on our disturbance estimator, we design a state
observer for the system \dref{wave-o} as follows:
\begin{equation} \label{wave-o-obser}
   \left\{\begin{array}{rl} \widehat{w}_{tt}(x,t) \nm &=\m \widehat
   {w}_{xx}(x,t),\crr\disp \widehat{w}_x(0,t) \nm &=\m -qw(0,t)+c_1
   [\widehat{w}(0,t)-w(0,t)],\crr\disp \widehat{w}_x(1,t) \nm &=\m
   u(t)+\widehat F(t)-Y_x(1,t),\crr\disp \widehat{w}(x,0) \nm &=\m
   \widehat{w}_0(x),\ \ \widehat{w}_t(x,0)\m=\m \widehat{w}_1(x),\crr
   \disp Y_t(x,t) \nm &=\m -Y_x(x,t),\crr\disp Y(0,t) \nm &=\m -c_0
   [\widehat{w}(0,t)-w(0,t)],\qquad Y(x,0) \m=\m Y_0(x),
   \end{array}\right.
\end{equation}
where $c_0$ and $c_1$ are the same as in \dref{wave-o-v} and $\widehat
F(t)=z_x(1,t)$ is generated by the total disturbance estimator
\dref{wave-o-v}. The system \dref{wave-o-obser} is a ``natural
observer'' \cite{De2004scl} after canceling the disturbance, in the
sense that it employs a copy of the plant plus output injection (in
this case, only at the boundary). Note that the observer
\dref{wave-o-obser} is different from the one in \cite{KGBS:08}, where
the signal $w_t(1,t)$ (that is unavailable here) is used.

To show the asymptotic convergence of the above observer, we introduce
the observer error variable \vspace{-2mm}
\begin{equation} \label{varep-def}
   \e(x,t) \m=\m \widehat{w}(x,t)-w(x,t) \m.
\end{equation}
Then, using the notation $\beta$ from \dref{tildez-def}, $(\e(x,t),
Y(x,t))$ satisfies
\begin{equation} \label{Boris-P}
   \left\{\begin{array}{l} \e_{tt}(x,t) \m=\m \e_{xx}(x,t),\crr\disp
   \e_x(0,t) \m=\m c_1\e(0,t), \ \ \ \e_x(1,t) \m=\m \beta_x(1,t)-
   Y_x(1,t), \crr\disp Y_t(x,t) \m=\m -Y_x(x,t),\ \ \
   Y(0,t) \m=\m -c_0\e(0,t).\end{array}\right.
\end{equation}
We have the following lemma to show that \dref{Boris-P} is
asymptotically stable.

\begin{lemma} \label{lem-varepto0}
For any initial state $(\e(\cdot,0),\e_t(\cdot,0),Y(\cdot,0))\in
\hline\times H^1(0,1)$ with the compatibility condition $Y(0,0)=-c_0
\e(0,0)$, there exists a unique solution to {\rm\dref{Boris-P}} such
that $(\e,\e_t,Y)\in C(0, \infty;\hline\times H^1(0,1))$ and it
satisfies
\begin{equation} \label{varepsi-to0}
   \lim_{t\to\infty}\|(\e(\cdot,t),\e_t(\cdot,t),Y(\cdot,t))
   \|_{\hline\times H^1(0,1)} \m=\m 0 \m.
\end{equation}
\end{lemma}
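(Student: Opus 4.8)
The plan is to recognize that the error system \dref{Boris-P} is structurally identical to the system \dref{wave-o-hatv-P} already analyzed in Lemma \ref{Lem-hatz-bd}, with the total disturbance $F$ replaced by the signal $-\beta_x(1,t)$. Indeed, under the matching $\e\leftrightarrow\widehat v$, $Y\leftrightarrow W$, the boundary condition $\e_x(1,t)+Y_x(1,t)=\beta_x(1,t)$ in \dref{Boris-P} plays exactly the role of $\widehat v_x(1,t)+W_x(1,t)=-F(t)$ in \dref{wave-o-hatv-P}, while the remaining equations coincide. The decisive observation, which is the whole point of the construction, is that by Lemma \ref{dis-est} and Remark \ref{Rem-k-est-d} the forcing $\beta_x(1,t)=Ce^{\Ascr_0t}(\beta_0,\beta_1)$ belongs to $L^2[0,\infty)$: the observation operator $C$ is admissible for $e^{\Ascr_0t}$ and this semigroup is exponentially stable. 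This $L^2$ property is precisely what is needed to invoke the convergence part of Lemma \ref{Lem-ABu}.

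First I would repeat the change of variables \dref{Shavuot}: setting $\eta(x,t)=-\e(x,t)-Y(x,t)$ and keeping $\widetilde c_0=c_0/(1-c_0)$, $\widetilde c_1=c_1/(1-c_0)$, a direct computation shows that $(\eta,Y)$ is governed by the system \dref{wave-o-hatv} with the input $F$ replaced by $-\beta_x(1,t)$ (and the compatibility condition $Y(0,0)=-c_0\e(0,0)$ becomes $Y(0,0)=\widetilde c_0\eta(0,0)$). In particular the ``$\eta$-part'' reads $\dfrac{\dd}{\dd t}(\eta,\eta_t)=\Ascr(\eta,\eta_t)-\Bscr\beta_x(1,t)$, with $\Ascr,\Bscr$ as in \dref{bbAep-def}. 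Since $\Ascr$ generates an exponentially stable semigroup, $\Bscr$ is admissible for it, and $\beta_x(1,\cdot)\in L^2[0,\infty)$, Lemma \ref{Lem-ABu} yields both existence and uniqueness of a solution in $C(0,\infty;\hline)$ and the convergence $\lim_{t\to\infty}\|(\eta(\cdot,t),\eta_t(\cdot,t))\|_\hline=0$.

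Next I would treat the transport component $Y$ exactly as in the proof of Lemma \ref{Lem-hatz-bd}. Using the d'Alembert-type representation \dref{W-sol} for $Y$ driven by $Y(0,t)=\widetilde c_0\eta(0,t)$, one obtains for $t\geq1$ the identity $\|Y(\cdot,t)\|_{H^1(0,1)}^2=\widetilde c_0^2[\eta^2(0,t)+\int_0^1\eta_t^2(0,t-x)\dd x]$, and the multiplier estimate \dref{til-v-bdt-bd} (with $\eta$ in place of $p$, and the same functional $\rho$) bounds $\int_0^1\eta_t^2(0,t-x)\dd x$ by $3\max_{s\in[t-1,t]}\|(\eta(\cdot,s),\eta_t(\cdot,s))\|_\hline^2$. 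Because the $\eta$-state tends to $0$, both terms on the right vanish as $t\to\infty$, so $\|Y(\cdot,t)\|_{H^1(0,1)}\to0$. Finally, reconstructing $\e=-\eta-Y$ and $\e_t=-\eta_t+Y_x$ gives $\|(\e(\cdot,t),\e_t(\cdot,t),Y(\cdot,t))\|_{\hline\times H^1(0,1)}\to0$, which is \dref{varepsi-to0}.

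The only genuinely non-routine step is the justification that $\beta_x(1,\cdot)\in L^2[0,\infty)$; everything else is a verbatim re-run of the estimates established for \dref{wave-o-hatv}. That step rests on the admissibility of $C$ for $e^{\Ascr_0t}$ (Lemma \ref{dis-est}), proved by a finite-speed-of-propagation comparison with the Dirichlet problem, combined with the exponential stability of $e^{\Ascr_0t}$, so I would simply cite Remark \ref{Rem-k-est-d}. I would also flag that, in contrast with the exponential-decay conclusion \dref{lem-v-W-bd-decay} of Lemma \ref{Lem-hatz-bd}, one can here assert only asymptotic (not exponential) convergence, since admissibility of $C$ guarantees an $L^2$ output but neither a pointwise nor an exponentially decaying one.
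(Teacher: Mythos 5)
Your proposal is correct and takes essentially the same route as the paper: the paper's proof introduces $\widetilde\e=\e+Y$ (the negative of your $\eta$), writes the wave part abstractly as $\frac{\dd}{\dd t}(\widetilde\e,\widetilde\e_t)=\Ascr(\widetilde\e,\widetilde\e_t)+\Bscr\beta_x(1,t)$ with $\Ascr,\Bscr$ from \dref{bbAep-def}, invokes Remark \ref{Rem-k-est-d} for $\beta_x(1,\cdot)\in L^2[0,\infty)$ and Lemma \ref{Lem-ABu} for existence, uniqueness and decay, and then disposes of the $Y$-part exactly by the argument of Lemma \ref{Lem-hatz-bd} under the substitution $\widehat v,W,p,F\mapsto\e,Y,-\widetilde\e,-\beta_x$. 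Your closing remark about why only asymptotic (not exponential) convergence is obtained here also matches the paper's own comment following Lemma \ref{calADAto0}.
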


\begin{proof} We introduce a new variable \vspace{-2mm}
\begin{equation} \label{tvarep-def}
   \widetilde\e(x,t) \m=\m \e(x,t)+Y(x,t) \m.
\end{equation}
Then it is easy to check that $(\widetilde\e(x,t),Y(x,t))$ is
governed by
\begin{equation} \label{Boris}
   \left\{\begin{array}{l} \widetilde\e_{tt}(x,t) \m=\m \widetilde
   \e_{xx}(x,t),\crr\disp \widetilde\e_x(0,t) \m=\m \widetilde c_1
   \widetilde\e(0,t)+\widetilde c_0\widetilde\e_t(0,t),\ \ \
   \widetilde\e_x(1,t) \m=\m \beta_x(1,t),\crr\disp
   Y_t(x,t) \m=\m -Y_x(x,t),\ \ \ Y(0,t) \m=\m -\widetilde c_0
   \widetilde\e(0,t), \end{array}\right.
\end{equation}
with the initial state \m $\widetilde\e(x,0)=\e(x,0)+Y(x,0),\
\widetilde\e_t(x,0)=\e_t(x,0)-Y_x(x,0),\ Y(x,0)=Y(x,0)$. The
$\e$-part of the system \dref{Boris} can be rewritten as
$$ \dfrac{\dd}{\dd t} \bbm{\widetilde \e(\cdot,t)\\ \widetilde\e_t
   (\cdot,t)} \m=\m \Ascr \bbm{\widetilde\e(\cdot,t)\\ \widetilde
   \e_t(\cdot,t)} + \Bscr\beta_x(1,t),$$
where $\Ascr$ and $\Bscr$ are defined by \dref{bbAep-def}. As already
mentioned, we know from \cite{GuoXu2007} that $\Ascr$ is an
exponentially stable semigroup generator on $\hline$ and $\Bscr$ is
admissible for it. By Remark \ref{Rem-k-est-d}, $\beta_x(1,t)\in L^2
[0,\infty)$. It follows from Lemma \ref{Lem-ABu} that for any initial
state in $\hline$, \dref{Boris} has a unique solution that satisfies
\vspace{-2mm}
\begin{equation} \label{tepsi-to0}
   \lim_{t\to\infty}\|(\widetilde\e(\cdot,t),\widetilde\e_t
   (\cdot,t))\|_{\hline} \m=\m 0 \m.
\end{equation}
The remaining part of the proof is very similar to the proof of
\dref{lem-v-W-bd0}, just replace $\widehat v,W,p$ and $F$ used there
with $\e,Y,-\widetilde\e$ and $-\beta_x$ used here. \end{proof}

Lemma \ref{lem-varepto0} shows that \dref{wave-o-obser} is indeed an
observer for the system \dref{wave-o}. Now, by the observer-based
feedback control law of \cite{KGBS:08}, we propose the following
observer-based feedback controller (the motivation behind it will be
clear from \dref{hatw-tw} to \dref{tilde-w-sys-0}): \vspace{-2mm}
\begin{equation} \label{con-out}
   \begin{array}{rl}\disp u(t) &=\m -\widehat F(t)+Y_x(1,t)-c_3
   \widehat{w}_t(1,t)-(c_2+q)\widehat{w}(1,t)\crr\disp
   &\m\ \ \ \ - (c_2+q)\int_0^1e^{q(1-\xi)}[c_3\widehat{w}_t(\xi,t)+q
   \widehat{w}(\xi,t)] \dd\xi \m,\end{array}
\end{equation}
where $c_2,c_3$ are positive design parameters. The term $-\widehat
F(t)$ is used to essentially cancel the total disturbance $F(t)$ in
\dref{wave-o}, which is the estimation/cancellation strategy, and the
remaining terms are used to stabilize the system \dref{wave-o-obser}.
The closed-loop system formed of the observer \dref{wave-o-obser} and
the controller \dref{con-out} is
\begin{equation} \label{wave-o-obser-closed}
   \left\{\begin{array}{rl} \nm\widehat{w}_{tt}(x,t) \nm\nm &=\m
   \widehat{w}_{xx}(x,t),\crr\disp \nm\widehat{w}_x(0,t) \nm\nm &=\m
   -qw(0,t)+c_1 [\widehat{w}(0,t)-w(0,t)],\crr\disp \nm\widehat{w}
   _x(1,t) \nm\nm &=\m -c_3 \widehat{w}_t(1,t)-(c_2+q)\widehat{w}(1,t)
   -(c_2+q)\int_0^1 e^{q(1-\xi)}[c_3\widehat{w}_t(\xi,t)+q\widehat{w}
   (\xi,t)]\dd\xi,\crr\disp \nm Y_t(x,t) \nm\nm &=\m -Y_x(x,t),\ \ \
   Y(0,t) \m=\m -c_0 [\widehat{w}(0,t)-w(0,t)].\end{array}\right.
\end{equation}
Consider the invertible change of variable
\begin{equation} \label{hatw-tw}
   \widetilde{w}(x,t) \m=\m [(I+\mathbb{P})\widehat{w}](x,t) \m=\m
   \widehat{w}(x,t)+(c_2+q)\int_0^x e^{q(x-\xi)}\widehat{w}(\xi,t)
   \dd\xi \m,
\end{equation}
where $\mathbb{P}$ is a Volterra transformation
\cite{KGBS:08}. The inverse $(I+\mathbb{P})^{-1}$ is given by
\begin{equation}
   \widehat{w}(x,t) \m=\m [(I+\mathbb{P})^{-1}\widetilde{w}](x,t)
   \m=\m \widetilde{w}(x,t)-(c_2+q)\int_0^x e^{-c_2(x-\xi)}
   \widetilde{w}(\xi,t) \dd\xi \m.
\end{equation}
It can be shown that the transformation \dref{hatw-tw} converts
system \dref{wave-o-obser-closed} into
\begin{equation} \label{tilde-w-sys}
\left\{\begin{array}{l}
\widetilde{w}_{tt}(x,t)=\widetilde{w}_{xx}(x,t)
-(c_1+q)(c_2+q)e^{qx}\e(0,t),\crr\disp
\widetilde{w}_x(0,t)=c_2\widetilde{w}(0,t)+(c_1+q)\e(0,t),\ \
\widetilde{w}_x(1,t)=-c_3\widetilde{w}_t(1,t),  \crr\disp
Y_t(x,t)=-Y_x(x,t),\ \ Y(0,t)=-c_0\e(0,t).
\end{array}\right.
\end{equation}
Thus, the overall system is a cascade of the exponentially stable
``$(\widetilde{w},Y)$-part'' subsystem and the asymptotical stable
``$\e$-part'' subsystem. For $\e(0,t)=0$, the resulting system
\dref{tilde-w-sys} is exponentially stable:
\begin{equation} \label{tilde-w-sys-0}
   \left\{\begin{array}{l}
   \widetilde{w}_{tt}(x,t)=\widetilde{w}_{xx}(x,t),  \crr\disp
   \widetilde{w}_x(0,t)=c_2\widetilde{w}(0,t), \ \ \
   \widetilde{w}_x(1,t)=-c_3\widetilde{w}_t(1,t),  \crr\disp
   Y_t(x,t)=-Y_x(x,t),\ \ \ \ Y(0,t) =\m 0 \m.
\end{array}\right.
\end{equation}
This is a familiar form of a wave equation with a ``passive damper''
boundary condition coupled with a finite time stable transport
equation. The solution of the ``$\widetilde{w}$-part'' is
exponentially stable and the solution of the ``$Y$-part'' satisfies
$Y(x,t)\equiv0$ for $t\geq 1$. The idea of the transformation
\dref{hatw-tw} is that it makes the closed-loop system
\dref{tilde-w-sys} behave like the system \dref{tilde-w-sys-0} (in
the absence of an observer) by propagating the destabilizing
$q$-term from the boundary $x=0$, through the entire domain, to the
boundary $x=1$, where it gets cancelled by the feedback.

\begin{lemma}\label{lem-tw-to0}
Suppose that the signal $\e(0,t)$ is determined by the system {\rm
\dref{Boris-P}}. Then for any initial state $(\widetilde{w}(\cdot,0),
\widetilde{w}_t(\cdot,0),Y(\cdot,0))\in\hline\times H^1(0,1)$
satisfying the compatibility condition $Y(0,0)=-c_0\e_0(0,0)$, there
exists a unique solution to {\rm\dref{tilde-w-sys}} such that\\
$(\widetilde{w},\widetilde{w}_t)\in C(0,\infty;\hline)$ and this
solution satisfies
\begin{equation} \label{tw-si-to0}
   \lim_{t\to\infty}\|(\widetilde{w}(\cdot,t),\widetilde{w}_t(\cdot,
   t),Y(\cdot,t))\|_{\mathbb{H}\times H^1(0,1)} \m=\m 0 \m.
\end{equation}
\end{lemma}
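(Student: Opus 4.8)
The plan is to treat \dref{tilde-w-sys} as a cascade driven by the scalar signal $\e(0,t)$ produced by \dref{Boris-P}: the transport ``$Y$-part'' reduces to Lemma \ref{lem-varepto0}, while the ``$\widetilde w$-part'' is recast as an abstract evolution equation and handled with Lemma \ref{Lem-ABu}.

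First I would dispose of the ``$Y$-part''. It is the very same equation as the ``$Y$-part'' of \dref{Boris-P}, namely $Y_t=-Y_x$ with $Y(0,t)=-c_0\e(0,t)$, fed by the same input $\e(0,t)$. Solving the transport equation explicitly gives $Y(x,t)=-c_0\e(0,t-x)$ for $t\geq x$ and $Y(x,t)=Y_0(x-t)$ for $x>t$, so the compatibility condition $Y(0,0)=-c_0\e_0(0,0)$ yields $Y\in C(0,\infty;H^1(0,1))$; moreover, for $t\geq 1$ the solution no longer depends on $Y_0$ and coincides with the $Y$ of \dref{Boris-P}. Hence $\|Y(\cdot,t)\|_{H^1(0,1)}\to 0$ is exactly the $Y$-convergence already contained in Lemma \ref{lem-varepto0}.

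For the ``$\widetilde w$-part'' I would introduce the operator $\Ascr_d(\phi,\psi)=(\psi,\phi'')$ on $\hline$ with
$$\Dscr(\Ascr_d)=\Big\{(\phi,\psi)\in H^2(0,1)\times H^1(0,1)\ \big|\ \phi'(0)=c_2\phi(0),\ \phi'(1)=-c_3\psi(1)\Big\},$$
which is precisely the generator of the homogeneous system \dref{tilde-w-sys-0}. Since this is a wave equation with a passive damper at $x=1$, the semigroup $e^{\Ascr_d t}$ is exponentially stable, as already observed after \dref{tilde-w-sys-0}. The ``$\widetilde w$-part'' then reads
$$\dfrac{\dd}{\dd t}\bbm{\widetilde w(\cdot,t)\\ \widetilde w_t(\cdot,t)}=\Ascr_d\bbm{\widetilde w(\cdot,t)\\ \widetilde w_t(\cdot,t)}+\Bscr_d\,\e(0,t),$$
where $\Bscr_d$ gathers the two channels through which $\e(0,t)$ enters: a bounded distributed operator carrying the forcing $-(c_1+q)(c_2+q)e^{qx}$, and an unbounded Neumann boundary operator at $x=0$, of the same type as $\bline_1$ in \dref{Trump}, carrying the inhomogeneous term $(c_1+q)\e(0,t)$ in the boundary condition. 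The distributed channel is bounded, hence admissible; the boundary channel is a standard Neumann boundary control for the $1$-D wave equation and is admissible for $e^{\Ascr_d t}$, admissibility near $x=0$ being insensitive to the boundary condition at $x=1$ by the same unit-propagation-speed reasoning used in the proof of Lemma \ref{dis-est}. Being a sum of admissible operators, $\Bscr_d$ is admissible.

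Finally I would invoke Lemma \ref{lem-varepto0} once more: since $(\e,\e_t)\to 0$ in $\hline=H^1(0,1)\times L^2(0,1)$, the Sobolev embedding $H^1(0,1)\hookrightarrow C[0,1]$ gives $\lim_{t\to\infty}\e(0,t)=0$, while $\e(0,\cdot)\in L^2_{loc}[0,\infty)$ by continuity. Lemma \ref{Lem-ABu} then supplies at once the unique solution $(\widetilde w,\widetilde w_t)\in C(0,\infty;\hline)$ and, through its ``$\lim u_i=0$'' criterion, the convergence $\|(\widetilde w(\cdot,t),\widetilde w_t(\cdot,t))\|_\hline\to 0$; combined with the $Y$-part this gives \dref{tw-si-to0}. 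I expect the only genuinely delicate point to be the admissibility of the boundary channel of $\Bscr_d$, since the exponential stability of $\Ascr_d$ and the whole $Y$-part are already in hand, leaving essentially the bookkeeping of the two input channels and of the cascade.
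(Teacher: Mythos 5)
Your proposal is correct and takes essentially the same route as the paper's proof: the $Y$-part is dispatched through Lemma \ref{lem-varepto0}, the ``$\widetilde w$-part'' is recast as an abstract equation for the passive-damper generator (the paper's $A_{\widetilde{w}}$ of \dref{Aw-def}) driven by $\e(0,t)$ through a bounded distributed channel plus an unbounded Neumann boundary channel, $\e(0,t)\to 0$ follows from Sobolev embedding and Lemma \ref{lem-varepto0}, and Lemma \ref{Lem-ABu} yields both well-posedness and the convergence \dref{tw-si-to0}. The only difference is in justification of the two standing facts: where you sketch a finite-propagation-speed argument (in the spirit of Lemma \ref{dis-est}) for admissibility of the boundary channel and appeal to the discussion after \dref{tilde-w-sys-0} for exponential stability, the paper simply cites \cite[Proposition 2]{JFF2013tac} for both.
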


\begin{proof} The convergence of ``$Y$-part'' of \dref{tilde-w-sys}
follows from Lemma \ref{lem-varepto0}. We can write the
``$\widetilde{w}$-part'' of system \dref{tilde-w-sys} into abstract
operator form as follows:
\begin{equation} \label{varep-abs}
   \frac{\dd}{\dd t}\bbm{\widetilde{w}(\cdot,t)\\ \widetilde{w}_t
   (\cdot,t)} \m=\m A_{\widetilde{w}} \bbm{\widetilde{w}(\cdot,t)\\
   \widetilde{w}_t(\cdot,t)} + B_1\e(0,t)+B_2\e(0,t),
\end{equation}
where the operators $A_{\widetilde{w}}:\Dscr(A_{\widetilde{w}})
(\subset\hline)\to\hline$, $B_1$ and $B_2$ are given by
\begin{equation} \label{Aw-def}
   \left\{\begin{array}{l}\disp A_{\widetilde{w}}(\phi,\psi) \m=\m
   (\psi,\phi'') \FORALL (\phi,\psi)\in\Dscr({A}_{\widetilde{w}}),
   \crr\disp \Dscr(A_{\widetilde{w}}) \m=\m \left\{(\phi,\psi)\in
   H^2(0,1)\times H^1(0,1)\ |\ \phi'(0)=c_2\phi(0),\ \phi'(1) = -c_3
   \psi(1)\right\},\crr\disp B_1=(c_1+q)(0,-\delta_0),\ \ \ \ B_2 =
   -(c_1+q)(c_2+q)(0,-e^{qx}).\end{array} \right.
\end{equation}
It is well known (\cite[Proposition 2]{JFF2013tac}) that
$A_{\widetilde{w}}$ generates an exponentially stable operator
semigroup $e^{A_{\widetilde{w}}t}$ on $\hline$ and $B_1$ is
admissible for it. On the other hand, since the operator $B_2$
is bounded, it is also admissible for $e^{A_{\widetilde{w}}t}$.
By the Sobolev embedding theorem and Lemma
\ref{lem-varepto0}, we obtain
$$ |\e(0,t)| \m\leq\m \|\e(0,t)\|_{H^1(0,1)}\to 0,\ \mbox{ as }
   t\to\infty \m.$$
It follows from  Lemma \ref{Lem-ABu} that $\lim_{t\to\infty}\|
(\widetilde{w}(\cdot,t),\widetilde{w}_t(\cdot,t))\|_{\mathbb{H}}=0$.
\end{proof}

\section{{ Well-posedness and stability of the closed-loop system}}
\label{Sec-closed-U-N} 

In this section we show the well-posedness and exponential stability
of the closed-loop system of \dref{wave-o}. First we claim that the
system \dref{tilde-w-sys} is exponentially stable. To this end, we
consider the overall system \dref{wave-o-tildz}, \dref{Boris-P} and
\dref{tilde-w-sys} as follows:
\begin{equation} \label{Lapid}
   \left\{\begin{array}{l} \e_{tt}(x,t) \m=\m \e_{xx}(x,t),\crr\disp
   \e_x(0,t)=c_1\e(0,t),\ \ \ \e_x(1,t) \m=\m \beta_x(1,t)-
   Y_x(1,t),\crr\disp \beta_{tt}(x,t) \m=\m \beta_{xx}(x,t),
   \crr\disp \beta_x(0,t) \m=\m \widetilde c_1\beta(0,t) + \widetilde
   c_0\beta_t(0,t),\ \ \ \beta(1,t) \m=\m 0,\crr\disp \widetilde{w}
   _{tt}(x,t) \m=\m \widetilde{w}_{xx}(x,t) -(c_1+q)(c_2+q)e^{qx}
   \e(0,t),\crr\disp \widetilde{w}_x(0,t) \m=\m c_2 \widetilde{w}(0,t)
   + (c_1+q)\e(0,t),\ \ \ \widetilde{w}_x(1,t) \m=\m -c_3\widetilde{w}
   _t(1,t),\crr\disp Y_t(x,t) \m=\m -Y_x(x,t),\ \ \ Y(0,t) \m=\m -c_0
   \e(0,t),\end{array} \right.
\end{equation}
in the space $\Xscr=\hline\times H^1(0,1)\times H^1_R(0,1)\times
L^2(0,1)\times\hline$, with the normal inner product.

\begin{lemma} \label{calADAto0}
Suppose that  $c_i>0$, $i=1,2,3$. For any initial
value $(\widetilde\e_0,\widetilde\e_1,Y_0,\beta_0,
\beta_t$, $\widetilde{w}_0,\widetilde{w}_1)\in\Xscr$, with
the compatibility condition $Y_0(0)=-c_0\widetilde\e_0(0)$, the
system {\rm\dref{Lapid}} admits a unique solution $(\e,\e_t,Y,
\beta,\beta_t,\widetilde{w},\widetilde{w}_t)\in C(0,
\infty;\Xscr)$ and there exist two constants $M,\mu>0$ such that
\begin{equation} \label{epYtztw-to0}
   \|(\e(\cdot,t),\e_t(\cdot,t),Y(\cdot,t),\beta(\cdot,t),
   \beta_t(\cdot,t),\widetilde{w}(\cdot,t),\widetilde{w}_t
   (\cdot,t))\|_{\Xscr} \leq Me^{-\mu t}.
\end{equation}
\end{lemma}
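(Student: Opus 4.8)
The plan is to exploit the cascade structure of \dref{Lapid}. The system decomposes into three layers that feed forward but not backward: the ``$\beta$-part'' is autonomous, the ``$\e$-part'' is driven by $\beta_x(1,t)$, and the ``$(\widetilde w, Y)$-part'' is driven by $\e(0,t)$. Earlier results already give exponential decay at each isolated layer: $e^{\Ascr_0 t}$ is exponentially stable (so the $\beta$-part decays exponentially), $A_{\widetilde w}$ generates an exponentially stable semigroup with $B_1,B_2$ admissible, and the $Y$-part reaches zero in finite time. The previous lemmas, however, only delivered \emph{asymptotic} (not exponential) decay for the $\e$-part and the $(\widetilde w,Y)$-part, because Lemma \ref{Lem-ABu} was applied with an $L^2$ input $\beta_x(1,t)$ and with an input $\e(0,t)$ that merely tends to zero. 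The work here is to upgrade these to exponential estimates, which is possible now because the \emph{driving} signals themselves decay exponentially.

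First I would establish well-posedness: since each layer is a well-posed boundary control system with admissible control operators (all cited earlier), and the coupling is strictly feedforward, the abstract form of \dref{Lapid} is $\frac{\dd}{\dd t}X = \Ascr_{\rm tot}X$ for a generator on $\Xscr$ obtained by a bounded/admissible perturbation of the block-diagonal generator; existence and uniqueness of $X\in C(0,\infty;\Xscr)$ then follow. Next I would prove the exponential estimate layer by layer. For the $\beta$-part, \dref{p-expdecay}-type reasoning gives $\|(\beta,\beta_t)\|_{\hline_0}\leq M e^{-\mu t}$ directly from exponential stability of $e^{\Ascr_0 t}$. By Lemma \ref{dis-est} the observation $\beta_x(1,\cdot)$ is admissible, and admissibility of an \emph{output} for an exponentially stable semigroup yields an exponentially weighted $L^2$ bound: $\int_t^{t+1}\beta_x(1,s)^2\,\dd s\leq M e^{-2\mu t}$, i.e. $\beta_x(1,\cdot)$ decays exponentially in the appropriate $L^2_{\rm loc}$ sense. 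Feeding this into the last part of Lemma \ref{Lem-ABu}, applied to the $\e$-part with generator $\Ascr$ and input operator $\Bscr$, gives $\|(\widetilde\e,\widetilde\e_t)\|_\hline\leq M e^{-\mu' t}$ for some $\mu'>0$; tracing back through $\widetilde\e=\e+Y$ and the $Y$-equation (as in the proof of Lemma \ref{lem-varepto0}) yields exponential decay of $(\e,\e_t,Y)$, in particular of $\e(0,t)$ via the Sobolev embedding.

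Finally, with $\e(0,\cdot)$ now known to decay exponentially, I would apply the last part of Lemma \ref{Lem-ABu} once more to \dref{varep-abs}: since $\|\e(0,t)\|\leq M e^{-\mu'' t}$ and $B_1,B_2$ are admissible for the exponentially stable semigroup $e^{A_{\widetilde w}t}$, the state $(\widetilde w,\widetilde w_t)$ satisfies an exponential bound as well. Collecting the three exponential estimates and taking a common rate $\mu>0$ smaller than all the individual rates gives \dref{epYtztw-to0}. The main obstacle I anticipate is the bookkeeping around the exponentially weighted admissibility estimate for the \emph{cascade input}: one must verify that the hypothesis ``$\|u_i\|_{U_i}\leq M_0 e^{-\mu_0 t}$'' in Lemma \ref{Lem-ABu} can be met, which requires converting the \emph{output} admissibility bound on $\beta_x(1,\cdot)$ (an $L^2$-type estimate, not a pointwise one) into a form that drives the next layer exponentially. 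This is handled cleanly by the weighting trick in Lemma \ref{Lem-ABu}'s final paragraph (multiply by $e^{\mu t}$ so that $A+\mu I$ is still exponentially stable and the weighted input is bounded), applied to $L^2$ inputs rather than bounded ones; the only care needed is to choose $\mu$ strictly below the decay rate of the $\beta$-layer so that $e^{\mu t}\beta_x(1,\cdot)$ remains in $L^2[0,\infty)$.
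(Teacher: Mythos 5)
Your proposal is correct, but it takes a genuinely different route from the paper's proof, and the difference is precisely the point flagged in the remark that follows the lemma. The paper never lets $\beta_x(1,t)$ drive anything: it introduces $\eta(x,t)=\widetilde\e(x,t)-\beta(x,t)$ (where $\widetilde\e=\e+Y$), and since $\widetilde\e$ and $\beta$ share the same impedance boundary condition at $x=0$ while $\widetilde\e_x(1,t)=\beta_x(1,t)$, the variable $\eta$ satisfies $\eta_x(1,t)=0$; the input is cancelled identically, the $(\eta,\beta)$-part of the resulting system \dref{Tillerson} is autonomous and exponentially stable by the cited results, and the $(Y,\widetilde w)$-part is then driven by the pointwise exponentially decaying trace $\eta(0,t)+\beta(0,t)$, with a multiplier ($\rho$-functional) argument supplying the hidden-regularity bound needed for the transport variable $Y$. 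The authors state explicitly that they do this because they ``are not able to prove that $\beta_x(1,t)$ decays exponentially, only that $\beta_x(1,t)\in L^2[0,\infty)$,'' so that Lemma \ref{Lem-ABu} cannot be applied to that layer. Your argument shows this obstruction can be circumvented rather than avoided: admissibility of the observation operator $C$ of Lemma \ref{dis-est} for the exponentially stable semigroup $e^{\Ascr_0 t}$ gives $\int_t^{t+1}\beta_x(1,s)^2\,\dd s=\int_0^1|Ce^{\Ascr_0\sigma}e^{\Ascr_0 t}(\beta_0,\beta_1)|^2\dd\sigma\leq k^2M^2e^{-2\mu t}\|(\beta_0,\beta_1)\|^2$, hence $e^{\mu' t}\beta_x(1,\cdot)\in L^2[0,\infty)$ for any $\mu'<\mu$, and the weighting trick from the last paragraph of Lemma \ref{Lem-ABu} --- extended from bounded to $L^2$ inputs, which is legitimate because the infinite-time admissibility estimate \dref{wAd-varep} applies verbatim to the shifted, still exponentially stable generator $\Ascr+\mu' I$ --- yields the exponential bound on $(\widetilde\e,\widetilde\e_t)$; your remaining steps (multiplier argument for $Y$, Sobolev embedding for $\e(0,t)$, Lemma \ref{Lem-ABu} for the $\widetilde w$-layer) then coincide with the paper's. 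Two caveats: the cascade must be set up in terms of $\widetilde\e=\e+Y$ rather than $\e$ itself, since $\e$ and $Y$ are coupled in both directions in \dref{Lapid} (you invoke this only implicitly via Lemma \ref{lem-varepto0}), and the $L^2$-weighted version of Lemma \ref{Lem-ABu} is not in the paper, so it would have to be stated and proved (routinely) as an extra lemma. As for what each approach buys: the paper's cancellation is shorter and stays entirely within its existing lemmas, but it hinges on the special structural coincidence of identical boundary conditions at $x=0$; your cascade argument costs one extra (easy) lemma but is more systematic, would survive in situations where no cancelling change of variables exists, and shows that the pessimistic claim in the paper's remark --- that exponential stability ``cannot follow from Lemma \ref{Lem-ABu}'' --- is an artifact of insisting on pointwise decay of $\beta_x(1,t)$, when an exponentially weighted $L^2$ estimate is both available and sufficient.
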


\begin{proof} Let $\widetilde\varepsilon(x,t)$ be given by
\dref{tvarep-def}. Introduce a new variable $\eta(x,t)=\widetilde
\e(x,t)-\beta(x,t)$. We convert the system \dref{Lapid} into the
following equivalent system: \vspace{-1mm}
\begin{equation} \label{Tillerson}
   \left\{\begin{array}{l}\eta_{tt}(x,t) \m=\m \eta_{xx}(x,t),\crr
   \disp \eta_x(0,t) \m=\m \widetilde c_1\eta(0,t) + \widetilde c_0
   \eta_t(0,t),\ \ \eta_x(1,t) \m=\m 0,\crr\disp Y_t(x,t) \m=\m
   -Y_x(x,t),\ \ \ Y(0,t) \m=\m -\widetilde c_0 [\eta(0,t)+\beta(0,
   t)],\crr\disp \beta_{tt}(x,t) \m=\m \beta_{xx}(x,t),\crr\disp
   \beta_x(0,t) \m=\m \widetilde c_1\beta(0,t) + \widetilde c_0
   \beta_t(0,t),\ \ \ \beta(1,t) \m=\m 0,\crr\disp \widetilde{w}
   _{tt}(x,t) \m=\m \widetilde{w}_{xx}(x,t) - \frac{(c_1+q)(c_2+q)}
   {1-c_0}e^{qx}[\eta(0,t)+\beta(0,t)],\crr\disp \widetilde{w}_x
   (0,t) \m=\m c_2\widetilde{w}(0,t) + \frac{c_1+q}{1-c_0}
   [\eta(0,t)+\beta(0,t)],\ \ \ \widetilde{w}_x(1,t) \m=\m -c_3
   \widetilde{w}_t(1,t).\end{array} \right.
\end{equation}
We see that the ``$(\eta,\beta)$-part'' of \dref{Tillerson} is
independent of the ``$(Y,\widetilde{w})$-part'' of \dref{Tillerson}.
It is well-known (\cite[Theorem 2.1]{GuoXu2007} and \cite[Theorem 3]
{ChengG1979}) that the subsystem $(\eta,\beta)$ is exponentially
stable, i.e., there exist two constants $M_1,\mu_1>0$ such that
\begin{equation} \label{eps-tz-exp-decay}
   \|(\eta(\cdot,t),\eta_t(\cdot,t),\beta(\cdot,t),\beta_t(\cdot,t)
   \|_{\hline\times H_R^1(0,1)\times L^2(0,1)} \m\leq\m M_1
   e^{-\mu_1t} \m.
\end{equation}
By the Sobolev embedding theorem we have
\begin{equation} \label{ep-tz-exp-decay}
   \begin{array}{l} |\eta(0,t)+\beta(0,t)| \m\leq\m \|\eta(0,t)+\beta
   (0,t)\|_{H^1(0,1)}\crr\disp \m\leq\m 2 \|(\eta(\cdot,t),\eta_t
   (\cdot,t),\beta(\cdot,t),\beta_t(\cdot,t)\|_{\hline\times
   H_R^1(0,1)\times L^2(0,1)} \m\leq\m 2M_1 e^{-\mu_1t} .\end{array}
\end{equation}
We can write the ``$\widetilde{w}$-part'' of \dref{Tillerson} in
operator form as follows:
\begin{equation} \label{varep-abs-tep}
   \frac{\dd}{\dd t} \bbm{\widetilde{w}(\cdot,t)\\ \widetilde{w}_t
   (\cdot,t)} =\m A_{\widetilde{w}} \bbm{\widetilde{w}(\cdot,t)\\
   \widetilde{w}_t(\cdot,t)} + \Bscr_1[\eta(0,t)+\beta(0,t)]+ \Bscr_2
   [\eta(0,t)+\beta(0,t)],
\end{equation}
where the operators $A_{\widetilde{w}}$ is given by \dref{Aw-def} and
$\Bscr_1=(c_1+q)/(1-c_0)(0,-\delta_0)$, $\Bscr_2=-(c_1+q)(c_2+q)/
(1-c_0)(0,-e^{qx})$. Since $A_{\widetilde{w}}$ generates an
exponentially stable operator semigroup $e^{A_{\widetilde{w}}t}$ on
$\hline$ and $\Bscr_1$, $\Bscr_2$ are admissible for this semigroup,
it follows from \dref{ep-tz-exp-decay} and Lemma \ref{Lem-ABu} that
there exist two constants $M_2,\mu_2>0$ such that
\begin{equation} \label{tw-expdecay}
 \|(\widetilde{w}(\cdot,t),\widetilde{w}_t(\cdot,t)\| \m\leq\m
   M_2e^{-\mu_2t} \m.\end{equation}
Next, we claim that the solution of the ``$Y$-part'' of
\dref{Tillerson} is exponentially stable. Define the function
\begin{equation} \label{Y-wp-sol}
   Y(x,t) \m=\m \left\{\begin{array}{ll}\disp -\widetilde c_0
   [\eta(0,t-x)+\beta(0,t-x)], & t\geq x,\crr\disp Y_0(x-t), & x>t.
   \end{array}\right.
\end{equation}
Then it is a straightforward to verify that $Y$ solves the
``$Y$-part'' of \dref{Tillerson}. Based on the proof of the
exponential stability of $W(\cdot,t)$ on ${H^1(0,1)}$ in Lemma
\ref{Lem-hatz-bd}, it suffices to show that there exist two constants
$M_3,\mu_3>0$ such that \vspace{-2mm}
\begin{equation} \label{tepsi-intval-exp-to0}
   \int_0^1 [\eta_t(0,t-x)+\beta_t(0,t-x)]^2 \dd x \m\leq\m
   M_3 e^{-\mu_3t} \m.
\end{equation}
Indeed, define \vspace{-2mm}
$$ \rho(t) \m=\m 2\int_0^1 (x-1)\eta_t(x,t)\eta_x(x,t) \dd x
   + 2\int_0^1 (x-1)\beta_t(x,t)\beta_x(x,t) \dd x.$$
Then
$ |\rho(t)| \m\leq\m \|(\eta(\cdot,t),\eta_t(\cdot,t),\beta(\cdot,
   t),\beta_t(\cdot,t))\|_{\hline\times H_R^1(0,1)\times L^2(0,1)}^2
   \m.$
Computing the derivative of $\rho(t)$ along the solution of
\dref{Tillerson} gives (we suppress the arguments $(x,t)$ that
appear within integrals)
$$ \dot{\rho}(t) = \eta_x^2(0,t)+\eta_t^2(0,t) -\int_0^1 [\eta_x^2
   +\eta_t^2]\dd x + \beta_x^2(0,t) + \beta_t^2(0,t) - \int_0^1
   [\beta_x^2 + \beta_t^2] \dd x$$
$$ \m\ \ \ \geq\m \eta_t^2(0,t) - \int_0^1 [\eta_x^2+\eta_t^2]
   \dd x + \beta_t^2(0,t) - \int_0^1 [\beta_x^2 + \beta_t^2]\dd x,$$
which, combining with \dref{eps-tz-exp-decay}, implies that
$$ \begin{array}{l}\disp \int_{t-1}^t[\eta_s^2(0,s)+ \beta_s^2(0,s)]
   \dd s\crr\disp \leq\m \int_{t-1}^t\|(\eta(\cdot,s),\eta_s(\cdot,s),
   \beta(\cdot,s),\beta_s(\cdot,s))\|_{\hline\times H_R^1(0,1)\times
   L^2(0,1)}^2 \dd s+\rho(t)-\rho(t-1)\crr\disp \leq M_1^2\int_{t-1}^t
   e^{-2\mu_1s} \dd s + M_1^2 e^{-2\mu_1t}+M_1^2e^{-2\mu_1(t-1)}
   \m\leq\m \bigg(\frac{e^{2\mu_1}} {2\mu_1}+e^{2\mu_1}+1\bigg) M_1^2
   e^{-2\mu_1t}.\end{array}$$
On the other hand, since for all $t\geq 1$,
$$ \begin{array}{l}\disp \int_0^1 [\eta_t(0,t-x)+\beta_t(0,t-x)]^2 \dd
   x\crr\disp \leq\m 2\int_0^1 [\eta_t^2(0,t-x) +\beta_t^2(0,t-x)] \dd
   x \m=\m 2\int_{t-1}^t [\eta_s^2(0,s)+\beta_s^2(0,s)] \dd s,
   \end{array}$$
we obtain \dref{tepsi-intval-exp-to0} with \m $M_3=2\big(\frac
{e^{2\mu_1}}{2\mu_1}+e^{2\mu_1}+1\big)M_1^2$ and $\mu_3=2\mu_2$.
Combining with $\e(x,t)=\widetilde\e(x,t)-Y(x,t)$, $Y_t(x,t)=-Y_x
(x,t)$, \dref{eps-tz-exp-decay} and \dref{tw-expdecay}, we get
\dref{epYtztw-to0}. \end{proof}

\begin{remark} {\rm
In the proof of Theorem \ref{calADAto0} below, we introduce the new
variable $\eta(x,t)=\widetilde\e(x,t)-\beta(x,t)$ which is a useful
trick in proving the exponential stability of the subsystem
$\widetilde\e(x,t)$ and the subsystem $\e(x,t)$. This is because we
are not able to prove that $\beta_x(1,t)$ decays exponentially, only
that $\beta_x(1,t)\in L^2[0,\infty)$. So, the exponential stabilities
mentioned cannot follow from Lemma \ref{Lem-ABu}.}
\end{remark}

Now we go back to the closed-loop system \dref{wave-o} under the
feedback \dref{con-out}:
\begin{equation}\label{wave-o-closed-a}
   \left\{\begin{array}{l} w_{tt}(x,t) \m=\m w_{xx}(x,t),\crr\disp
   w_x(0,t) \m=\m -qw(0,t),\crr\disp w_x(1,t) \m=\m -z_x(1,t)+Y_x(1,t)
   -c_3\widehat{w}_t(1,t)-(c_2+q)\widehat{w}(1,t)+f(w(\cdot,t),
   w_t(\cdot,t))\crr\disp\hspace{1.8cm} +\m d(t) - (c_2+q)\int_0^1
   e^{q(1-\xi)}[c_3\widehat{w}_t(\xi,t)+q\widehat{w}(\xi,t)]\dd\xi,
   \crr\disp v_{tt}(x,t) \m=\m v_{xx}(x,t),\crr\disp v_x(0,t) \m=\m
   - qw(0,t) + c_1[v(0,t)-w(0,t)],\crr\disp v_x(1,t) \m=\m -z_x(1,t)
   + Y_x(1,t)-W_x(1,t)-c_3\widehat{w}_t(1,t)-(c_2+q)\widehat{w}(1,t)
   \crr\disp\hspace{1.8cm} -(c_2+q)\int_0^1 e^{q(1-\xi)}\left[ c_3
   \widehat{w}_t(\xi,t)+q\widehat{w}(\xi,t) \right]\dd\xi,
   \end{array}\right.
\end{equation}
\begin{equation} \label{wave-o-closed-b}
   \left\{\begin{array}{l} z_{tt}(x,t) \m=\m z_{xx}(x,t),\crr\disp
   z_x(0,t) \m=\m \widetilde c_1 z(0,t)+\widetilde c_0 z_t(0,t),\
   \ \ \ z(1,t) \m=\m -v(1,t)-W(1,t)+w(1,t),\crr\disp W_t(x,t) \m=\m
   -W_x(x,t),\ \ \ W(0,t) \m=\m -c_0[v(0,t)-w(0,t)],\crr\disp
   \widehat{w}_{tt}(x,t) \m=\m \widehat{w}_{xx}(x,t),\crr\disp
   \widehat{w}_x(0,t) \m=\m -qw(0,t)+c_1[\widehat{w}(0,t)-w(0,t)],
   \crr\disp \widehat{w}_x(1,t) \m=\m -c_3\widehat{w}_t(1,t)-(c_2+q)
   \widehat{w}(1,t) - (c_2+q) \int_0^1 e^{q(1-\xi)}[c_3\widehat{w}_t
   (\xi,t)+q\widehat{w}(\xi,t)] \dd\xi,\crr\disp Y_t(x,t) \m=\m
   -Y_x(x,t),\ \ \ Y(0,t) \m=\m -c_0[\widehat{w}(0,t)-w(0,t)].
   \end{array}\right.
\end{equation}
We consider system \dref{wave-o-closed-a}-\dref{wave-o-closed-b}
in the state space $\Hscr=\hline^3\times H^1(0,1)\times\hline
\times H^1(0,1)$.

\begin{theorem} \label{thm-closed-US-NC}
Suppose that  $c_i>0$, $i=1,2,3$, $f:H^1(0,1)\times
L^2(0,1)\to\mathbb{R}$ is continuous, and $d\in L^\infty[0,\infty)$ or
$d\in L^2[0,\infty)$. For any initial state
$(w_0,w_1,v_0,v_1,z_0,z_1,W_0,\widehat{w}_0,\widehat{w}_1,$ $Y_0)\in
\Hscr$ satisfying the compatibility conditions
$$ -z_0(1)-v_0(1)-W_0(1)+w_0(1) \m=\m 0,\qquad W_0(0)+c_0[v_0(0)-w_0
   (0)] \m=\m 0 \m,$$
$$ Y_0(0)+c_0[\widehat{w}_0(0)-w_0(0)] \m=\m 0 \m,$$
there exists a unique solution to {\rm\dref{wave-o-closed-a}-\dref
{wave-o-closed-b}} such that
$$ (w,w_t,v,v_t,W,z,z_t,\widehat{w}_0,\widehat{w}_t,Y) \m\in\m
   C(0,\infty;\mathcal{H}) \m,$$
\begin{equation} \label{thm-whatwY-decay}
   \|(w(\cdot,t),w_t(\cdot,t), \widehat{w}(\cdot,t),\widehat{w}_t
   (\cdot,t),Y(\cdot,t)) \|_{\mathbb{H}^2\times H^1(0,1)} \m\leq\m
   Me^{-\mu t} \FORALL t\geq 0,
\end{equation}
with some $M,\mu>0$ independent of the initial state, and
\begin{equation} \label{thm-vzW-bd}
   \sup_{t\geq0}\|(v(\cdot,t),v_t(\cdot,t),z(\cdot,t),z_t(\cdot,t),
   W(\cdot,t))\|_{\mathbb{H}^2\times H^1(0,1)} \m<\m \infty \m.
\end{equation}
If we assume further that $f(0,0)=0$ and $d\in L^2[0,\infty)$, then
\begin{equation} \label{thm-vzW-bd0}
   \lim_{t\to\infty} \|(v(\cdot,t),v_t(\cdot,t),z(\cdot,t),z_t
   (\cdot,t),W(\cdot,t))\|_{\hline^2\times H^1(0,1)} \m=\m 0 \m.
\end{equation}
If we assume that $f\equiv0$ and $d\equiv0$, then there exist two
constants $M',\mu'>0$ such that
\begin{equation} \label{thm-vzW-bd-decay}
   \|(v(\cdot,t),v_t(\cdot,t),z(\cdot,t),z_t(\cdot,t),W(\cdot,t))
   \|_{\hline^2\times H^1(0,1)} \m\leq\m M'e^{-\mu' t}\ \ \ \forall
   \m t\geq 0\m.
\end{equation}
\end{theorem}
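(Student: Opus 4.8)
The plan is to reduce the closed-loop system \dref{wave-o-closed-a}--\dref{wave-o-closed-b} to the autonomous linear system \dref{Lapid}, whose exponential stability is already in hand from Lemma \ref{calADAto0}, and then to reconstruct the physical states and read off their asymptotics. The decisive structural fact is that in the error coordinates $\e=\widehat w-w$, $\beta=z-p$ and $\widetilde w=(I+\mathbb P)\widehat w$ (with $p=-\widehat v-W=-(v-w)-W$ as in \dref{hatv-def}, \dref{Shavuot}), together with $Y$, the nonlinearity $f$ and the disturbance $d$ cancel entirely: the computations leading to \dref{Boris-P}, \dref{wave-o-tildz} and \dref{tilde-w-sys} show that $(\e,\e_t,Y,\beta,\beta_t,\widetilde w,\widetilde w_t)$ satisfies precisely \dref{Lapid}, an $f$- and $d$-free system. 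The three compatibility conditions in the statement are exactly what place the transformed initial data in $\Xscr$; in particular, $-z_0(1)-v_0(1)-W_0(1)+w_0(1)=0$ forces $\beta_0(1)=0$, so that $(\beta_0,\beta_1)\in\hline_0$.

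First I would prove well-posedness and the decay estimate \dref{thm-whatwY-decay}. By Lemma \ref{calADAto0}, \dref{Lapid} has a unique solution in $C(0,\infty;\Xscr)$ obeying \dref{epYtztw-to0}, so $\e,Y,\beta,\widetilde w$ all decay exponentially. Since $I+\mathbb P$ is a Volterra transformation with smooth kernel, it is boundedly invertible on $\hline$, whence $\widehat w=(I+\mathbb P)^{-1}\widetilde w$ decays exponentially and then so does $w=\widehat w-\e$; as the reconstruction maps are bounded and initial-state-independent, the constants $M,\mu$ are uniform, giving \dref{thm-whatwY-decay}. To see that this reconstruction really yields a solution of the original loop (existence), one runs the substitutions backwards; the key self-consistency check is at $x=1$, where $w_x(1,t)=\widehat w_x(1,t)-\e_x(1,t)=[u+\widehat F-Y_x(1,t)]-[\beta_x(1,t)-Y_x(1,t)]=u+(\widehat F-\beta_x(1,t))=u+F$, using $\widehat F-F=\beta_x(1,t)$ from Remark \ref{Rem-k-est-d}, so $w$ indeed solves the plant with total disturbance $F=f(w,w_t)+d$. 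Uniqueness is immediate in the same coordinates: any solution forces the error variables to solve \dref{Lapid}, which determines $\widehat w,w$, hence $F$, hence all remaining variables.

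Next I would establish \dref{thm-vzW-bd}, \dref{thm-vzW-bd0} and \dref{thm-vzW-bd-decay}. With $w\in C(0,\infty;\hline)$ now known and bounded, the subsystem with state $(\widehat v,W)=(v-w,W)$ is exactly \dref{wave-o-hatv-P} driven by $F=f(w,w_t)+d$, so Lemma \ref{Lem-hatz-bd} applies verbatim: continuity of $f$ and boundedness of $(w,w_t)$ give $f(w,w_t)\in L^\infty$, so \dref{lem-v-W-bd} yields boundedness of $(\widehat v,\widehat v_t,W)$ and therefore of $v=\widehat v+w$. Writing $z=\beta+p$ with $p=-\widehat v-W$, the exponential decay of $\beta$ and the boundedness of $p$ give boundedness of $(z,z_t)$, which is \dref{thm-vzW-bd}. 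For \dref{thm-vzW-bd0}, the hypotheses $f(0,0)=0$ and $d\in L^2$, together with $(w,w_t)\to0$ exponentially and the continuity of $f$, give $|f(w,w_t)|\to0$, so \dref{lem-v-W-bd0} forces $(\widehat v,W)\to0$; then $v=\widehat v+w\to0$ and $z=\beta+p\to0$. For \dref{thm-vzW-bd-decay}, when $f\equiv0$ and $d\equiv0$ we have $F\equiv0$, so \dref{lem-v-W-bd-decay} gives exponential decay of $(\widehat v,W)$, hence of $v$ and $z$.

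I expect the main obstacle to be the well-posedness bookkeeping of the first step: verifying that the several invertible substitutions genuinely decouple the loop into the linear, $f$-free system \dref{Lapid} plus a one-directional reconstruction, so that only the continuity of $f$ (and not a Lipschitz condition as in Proposition \ref{Pro01}) is required. The conceptual point is that $f$ enters only downstream, through the already-determined $w$, so no nonlinear fixed-point argument is needed; making this precise --- matching every boundary condition and confirming the self-consistency identity $w_x(1,t)=u+F$ --- is the delicate part.
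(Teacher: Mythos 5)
Your proposal is correct and follows essentially the same route as the paper: the same change of variables $(\e,\beta,\widetilde w,Y,\widehat v,W)$, the same decoupling into the $f$- and $d$-free system \dref{Lapid} (handled by Lemma \ref{calADAto0}) plus the downstream $(\widehat v,W)$-part \dref{Putin} (handled by Lemma \ref{Lem-hatz-bd}), and the same reconstruction of $(w,\widehat w,v,z)$ via $(I+\mathbb{P})^{-1}$ and the relations $w=\widehat w-\e$, $v=\widehat v+w$, $z=\beta+p$. Your explicit self-consistency check $w_x(1,t)=u+F$ at $x=1$ and the uniqueness argument merely spell out what the paper compresses into the assertion that \dref{wave-o-closed-a}-\dref{wave-o-closed-b} and \dref{closed-equiv-a}-\dref{closed-equiv-b} are equivalent.
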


\begin{proof} Using the variables $\e(x,t),\beta(x,t)$ and
$\widehat{v}(x,t)$ given by \dref{varep-def}, \dref{tildez-def} and
\dref{hatv-def}, respectively, and the invertible transformation
\dref{hatw-tw}, we can rewrite
\dref{wave-o-closed-a}-\dref{wave-o-closed-b} as follows:
\begin{equation} \label{closed-equiv-a}
   \left\{\begin{array}{l} \e_{tt}(x,t) \m=\m \e_{xx}(x,t),\crr\disp
   \e_x(0,t)=c_1\e(0,t),\ \ \ \e_x(1,t)=\beta_x(1,t)-Y_x(1,t),\crr
   \disp \beta_{tt}(x,t) \m=\m \beta_{xx}(x,t),\crr\disp \beta_x(0,t)
   =\m \frac{c_1}{1-c_0}\beta(0,t)+\frac{c_0}{1-c_0}\beta_t(0,t),\ \
   \ \beta(1,t) \m=\m 0,\crr\disp \widetilde{w}_{tt}(x,t) \m=\m
   \widetilde{w}_{xx}(x,t) - (c_1+q)(c_2+q)e^{qx}\e(0,t),\crr\disp
   \widetilde{w}_x(0,t) = c_2\widetilde{w}(0,t)+(c_1+q)\e(0,t),\ \
   \ \widetilde{w}_x(1,t) = -c_3\widetilde{w}_t(1,t),
   \end{array}\right.
\end{equation}
\begin{equation} \label{closed-equiv-b}
   \left\{\begin{array}{l} Y_t(x,t) \m=\m -Y_x(x,t),\ \ \ Y(0,t)
   \m=\m -c_0\e(0,t),\crr\disp \widehat{v}_{tt}(x,t) \m=\m \widehat
   {v}_{xx}(x,t),\crr\disp \widehat{v}_x(0,t) \m=\m c_1\widehat{v}
   (0,t),\crr\disp \widehat{v}_x(1,t) \m=\m -f(w(\cdot,t),w_t(\cdot,
   t))-d(t)-W_x(1,t),\crr\disp W_t(x,t) \m=\m -W_x(x,t),\ \ \ W(0,t)
   =-c_0\widehat{v}(0,t).\end{array}\right.
\end{equation}
It is clear that \dref{wave-o-closed-a}-\dref{wave-o-closed-b} is
well-posed if and only if \dref{closed-equiv-a}-\dref{closed-equiv-b}
is well-posed. We see that the ``$(\e,\beta,\widetilde{w},Y)$-part''
of \dref{closed-equiv-a}-\dref{closed-equiv-b} is independent of the
``$(\widehat{v},W)$-part'' of this system. By Lemma \ref{calADAto0},
there exist two constants $M_1,\mu_1>0$ such that the solution $(\e,
\e_t,Y,\beta,\beta_t,\widetilde{w},\widetilde{w}_t)\in C(0,\infty;
\Xscr)$ satisfies
\begin{equation} \label{proof-epYtztw-to0}
   \|(\e(\cdot,t),\e_t(\cdot,t),Y(\cdot,t),\beta(\cdot,t),\beta_t
   (\cdot,t),\widetilde{w}(\cdot,t),\widetilde{w}_t(\cdot,t))
   \|_\Xscr \leq\m M_1 e^{-\mu_1 t}.
\end{equation}
Owing to the invertibility of the transformation
$$ \begin{pmatrix}w(x,t)\cr w_t(x,t)\cr \widehat{w}(x,t)\cr
   \widehat{w}_t(x,t)\end{pmatrix} = \begin{pmatrix} -I & 0 &
   (I+\pline)^{-1} & 0\cr 0 & -I & 0 & (I+\pline)^{-1}\cr 0 & 0 &
   (I+\pline)^{-1} & 0\cr 0 & 0 & 0 & (I+\pline)^{-1}\end{pmatrix}
   \begin{pmatrix} \e(x,t)\cr \e_t(x,t)\cr \widetilde{w}(x,t)\cr
   \widetilde{w}_t(x,t)\end{pmatrix},$$
where $I+\pline$ is defined by \dref{hatw-tw}, $(w(\cdot,t), w_t
(\cdot,t),\widehat{w}(\cdot,t), \widehat{w}_t(\cdot,t))\in C(0,\infty;
\hline^2)$ is well-defined and satisfies
\begin{equation} \label{th-w-hat-w-pf}
   \|(w(\cdot,t),w_t(\cdot,t),\widehat{w}(\cdot,t),\widehat{w}_t
   (\cdot,t))\|_{\hline^2} \m\leq\m 2[1+\|(I+\mathbb{P})^{-1}\|] M_1
   e^{-\mu_1 t},
\end{equation}
which, combined with \dref{proof-epYtztw-to0}, implies that
\dref{thm-whatwY-decay} holds with $M=3[1+\|(I+\mathbb{P})^{-1}\|]M_1$
and $\mu=\mu_1$. Now we consider the ``$(\widehat{v},W)$-part'':
\begin{equation} \label{Putin}
   \left\{\begin{array}{rl} \widehat{v}_{tt}(x,t) &=\m \widehat{v}
   _{xx}(x,t),\crr\disp \widehat{v}_x(0,t) &=\m c_1\widehat{v}(0,t),
   \ \ \ \widehat{v}_x(1,t) =\m -f(w(\cdot,t),w_t(\cdot,t))-d(t)
   -W_x(1,t),\crr\disp W_t(x,t) &=\m -W_x(x,t),\ \ \ W(0,t) \m=\m
   -c_0\widehat{v}(0,t).\end{array}\right.
\end{equation}
Since  $f:H^1(0,1)\times L^2(0,1)\to\rline$ is continuous and
$(w,\dot{w})$ is bounded, due to the convergence \m $\|(w(\cdot,t),
w_t(\cdot,t))\|_{\hline}\to 0$, we conclude that \m $f(w(\cdot,t),
w_t(\cdot,t))\in L^\infty[0,\infty)$. Since $d\in L^\infty[0,\infty)$
or $d\in L^2[0,\infty)$, it follows from Lemma \ref{Lem-hatz-bd} that
the system \dref{Putin} admits a unique  bounded solution, i.e.,
\begin{equation} \label{th-hatv-W-bd}
   \sup_{t\geq0} \|(\widehat{v}(\cdot,t),\widehat{v}_t(\cdot,t),W
   (\cdot,t))\|_{\mathbb{H}\times H^1(0,1)} \m<\m \infty \m.
\end{equation}
Noting that $W_t(x,t)=-W_x(x,t)$, it follows from \dref{hatv-def},
\dref{tildez-def} and \dref{th-hatv-W-bd} that
$$ \begin{array}{l} \|(v(\cdot,t),v_t(\cdot,t))\|_{\mathbb{H}}
   \m\leq\m \|(\widehat{v}(\cdot,t),\widehat{v}_t(\cdot,t))\|
   _{\hline}+\|(w(\cdot,t),w_t(\cdot,t))\|_{\hline},\crr\disp
   \|(z(\cdot,t),z_t(\cdot,t))\|_{\hline} \m\leq\m \|(\beta(\cdot,t),
   \beta_t(\cdot,t))\|_\hline +\|(\widehat{v}(\cdot,t),\widehat{v}_t
   (\cdot,t))\|_\hline + \|(W(\cdot,t),W_x(\cdot,t))\|_\hline .
   \end{array}$$
The right-hand sides above are finite, which gives \dref{thm-vzW-bd}.

Now suppose that $f(0,0)=0$ and $d\in L^2[0,\infty)$. By
\dref{th-w-hat-w-pf} and the continuity of $f$, we have
$\lim_{t\to\infty}|f(w,w_t)|=0$. By Lemma \ref{Lem-hatz-bd}, we obtain
\begin{equation} \label{lem-v-W-bd0-th}
   \lim_{t\to\infty}\|(\widehat{v}(\cdot,t),\widehat{v}_t(\cdot,t),
   W(\cdot,t))\|_{\mathbb{H}\times H^1(0,1)} \m=\m 0 \m.
\end{equation}
By \dref{proof-epYtztw-to0}, \dref{th-w-hat-w-pf} and
\dref{lem-v-W-bd0-th}, we derive
$$ \begin{array}{l} \|(v(\cdot,t),v_t(\cdot,t))\|_\hline \leq \|
   (\widehat{v}(\cdot,t),\widehat{v}_t(\cdot,t))\|_\hline + \|(w
   (\cdot,t),w_t(\cdot,t))\|_\hline \to 0 \ \mbox{ as }t\to\infty,
   \crr\disp \|(z(\cdot,t),z_t(\cdot,t))\|_\hline \leq \|(\beta
   (\cdot,t),\beta_t(\cdot,t))\|_\hline + \|(\widehat{v}(\cdot,t),
   \widehat{v}_t(\cdot,t))\|_\hline + \|(W(\cdot,t),W_x(\cdot,t))
   \|_\hline \m.\end{array}$$
Next, suppose that $f\equiv 0$ and $d\equiv 0$. It follows from Lemma
\ref{Lem-hatz-bd} that there exist two constants $M_2,\mu_2>0$ such
that for all $t\geq 0$,
\begin{equation} \label{lem-v-W-bd-decay-th}
   \|(\widehat{v}(\cdot,t),\widehat{v}_t(\cdot,t),W(\cdot,t))\|_{\hline
   \times H^1(0,1)} \m\leq\m M_2e^{-\mu_2 t}.
\end{equation}
By \dref{proof-epYtztw-to0}, \dref{th-w-hat-w-pf} and
\dref{lem-v-W-bd-decay-th}, we obtain that for all $t\geq 0$,
$$ \begin{array}{l} \|(v(\cdot,t),v_t(\cdot,t))\|_{\hline} \m\leq\m
   \|(\widehat{v}(\cdot,t),\widehat{v}_t(\cdot,t))\|_{\hline} +
   \|(w(\cdot,t),w_t(\cdot,t))\|_{\hline}\crr\disp\hspace{3.1cm}
   \leq\m M_2e^{-\mu_2 t}+2[1+\|(I+\mathbb{P})^{-1}\|]M_1e^{-\mu_1 t},
   \end{array}$$
$$ \begin{array}{l} \|(z(\cdot,t),z_t(\cdot,t))\|_\hline \m\leq\m
   \|(\beta(\cdot,t),\beta_t(\cdot,t))\|_\hline + \|(\widehat{v}
   (\cdot,t),\widehat{v}_t(\cdot,t))\|_\hline + \|(W(\cdot,t),W_x
   (\cdot,t))\|_\hline\crr\disp\hspace{3.2cm} \leq\m M_1 e^{-\mu_1 t}
   + 3M_2 e^{-\mu_2 t},\end{array}$$
which, combined with \dref{lem-v-W-bd-decay-th}, implies that
\dref{thm-vzW-bd-decay} holds. \end{proof}

\begin{remark} \label{rek-unstablewave} {\rm
The signals $\{w(0,t),w(1,t)\}$ are almost a minimal set of
measurement signals to exponentially stabilize the system
\dref{wave-o}. Indeed, from Theorem \ref{thm-closed-US-NC}, we see
that we can design disturbance estimator and state observer by using
$\{w(0,t) ,w(1,t)\}$ only. Based on this disturbance estimator and
state observer, the system \dref{wave-o} can be exponentially
stabilized by using $\{w(0,t),w(1,t)\}$ only. However,\\ (a). Each of
the observations $\{w(0,t),w(1,t)\}$ alone is not enough for exact
observability, i.e., for any $T>0$, there is no constant $C_T>0$ such
that
$$ \int_0^T w^2(0,s) \dd s \m\geq\m C_T\|w(\cdot,0),w_t(\cdot,0)
   \|_\hline,\ \ \ \int_0^Tw^2(1,s) \dd s \m\geq\m C_T\|
   w(\cdot,0),w_t(\cdot,0)\|_\hline \m.$$
(b). The signal $y(t)=w(1,t)$ is also not enough for exponential
stabilization. Actually, let $f(w,w_t)\equiv 0$, and let $d=q$. Then
the system \dref{wave-o} admits a solution $(w,w_t)=(q(x-1),0)$
which makes the output $y(t)=w(1,t)\equiv 0$.

From (a), (b), $w(0,t)$ seems to be necessary for stabilization. We
leave two open question here: (I): Can we design a state observer for
system \dref{wave-o} using only $y(t)=w(0,t)$? \\ (II): Is
$y(t)=w(0,t)$ enough to make the system \dref{wave-o} stabilizable?}
\end{remark}

\section{An anti-stable wave equation with negative damper}
\label{Sec-US-N} 

In this section we consider the output feedback exponential
stabilization for a new system, where the ``negative spring'' from
\dref{wave-o} is replaced with a ``negative damper'', so that only
the second equation in \dref{wave-o} is changed:
\begin{equation} \label{wave-o-US-N}
   \left\{\begin{array}{rl} w_{tt}(x,t) &=\m w_{xx}(x,t),\crr\disp
   w_x(0,t) &=\m -qw_t(0,t),\crr\disp w_x(1,t) &=\m u(t)+f(w(\cdot,
   t),w_t(\cdot,t))+d(t),\crr\disp w(x,0) &=\m w_0(x),\ \ \ w_t(x,0)
   \m=\m w_1(x),\crr\disp y_{m}(t) &=\m (w(0,t),\m w(1,t)),
   \end{array}\right.
\end{equation}
where $(w,w_t)$ is the state, $u$ is the control input signal, $y_m$
is the output signal, that is, the boundary traces $w(0,t)$ and
$w(1,t)$ are measured. The equations containing the parameter $q>0,
q\neq 1$ creates a destabilizing feedback, it is like the equation
of a damper but with the reversed sign. The function $f:H^1(0,1)
\times L^2(0,1)\to\rline$ is a possibly unknown nonlinear mapping
that represents the internal uncertainty, and $d$ represents the
unknown external disturbance which is only supposed to satisfy $d\in
L^\infty[0,\infty)$.

We consider system \dref{wave-o-US-N} in the state Hilbert space
$\hline=H^1(0,1)\times L^2(0,1)$. The intuitive representation is
as in Figure 1, but with a damper in place of the spring. The
following result is similar to Proposition \ref{Pro01}, and can be
proved along the same lines.

\begin{proposition} \label{Pro01-US}
Suppose that $f:H^1(0,1)\times L^2(0,1)\to\mathbb{R}$ is continuous
with $f(0,0)=0$ and satisfies a global Lipschitz condition in
$H^1(0,1)\times L^2(0,1)$. Then, for any $(w_0,w_1)\in\hline$, $u\in
L^2_{loc}[0,\infty)$, and $d\in L^2_{loc}[0,\infty)$, there exists a
unique global solution to {\rm\dref{wave-o-US-N}} such that
$(w(\cdot,t),\dot{w}(\cdot,t))\in C(0,\infty;\hline)$.
\end{proposition}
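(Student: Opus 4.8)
The plan is to mirror the proof of Proposition \ref{Pro01}: recast \dref{wave-o-US-N} as a semilinear evolution equation on $\hline$, driven through an admissible control operator, establish the underlying linear semigroup theory, and then close with a global Lipschitz fixed-point scheme. First I would introduce the operator $\aline_{\rm d}:\Dscr(\aline_{\rm d})\rarrow\hline$ carrying the \emph{negative damper} boundary condition, namely $\aline_{\rm d}(\phi,\psi)=(\psi,\phi'')$ on
\[
   \Dscr(\aline_{\rm d})=\Big\{(\phi,\psi)\in H^2(0,1)\times H^1(0,1)\ \big|\ \phi'(0)=-q\psi(0),\ \phi'(1)=0\Big\},
\]
together with the control operator $\bline_2=(0,\delta_1)$ from \dref{Trump}. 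With these, \dref{wave-o-US-N} reads, in the algebraic sense of Remark 10.1.4 in \cite{obs_book},
\[
   \dfrac{\dd}{\dd t}\bbm{w(\cdot,t)\\ w_t(\cdot,t)} = \aline_{\rm d}\bbm{w(\cdot,t)\\ w_t(\cdot,t)} + \bline_2\big[u(t)+f(\sbm{w(\cdot,t)\\ w_t(\cdot,t)})+d(t)\big],
\]
so that here, in contrast with \dref{Trump}, the destabilizing feedback is built directly into $\Dscr(\aline_{\rm d})$ and no separate $\bline_1$-term is needed.

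The linear heart of the proof is to show that $\aline_{\rm d}$ generates a $C_0$-group on $\hline$ (a $C_0$-semigroup already suffices for forward existence) and that $\bline_2$ is admissible for it. For the generation I would use a Lumer--Phillips argument: testing against $(w,w_t)$ and using the boundary conditions yields the energy identity $\frac{\dd}{\dd t}\big(\frac12\int_0^1[w_x^2+w_t^2]\,\dd x\big)=q\,w_t^2(0,t)$, which with the inner product \dref{Eliel} gives $\Re\langle\aline_{\rm d}(\phi,\psi),(\phi,\psi)\rangle_\hline = q|\psi(0)|^2+\Re[\psi(0)\overline{\phi(0)}]\leq c\,\|(\phi,\psi)\|_\hline^2$, so $\aline_{\rm d}-cI$ is dissipative; the range condition follows by solving the resolvent ODE $\phi''-\lambda^2\phi=-\lambda g-h$ with the Robin/Neumann data, the obstruction being the transcendental relation $e^{-2\lambda}=(q-1)/(q+1)$, which fails for $\Re\lambda$ large. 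Applying the same reasoning to the time-reversed problem (for which the damper becomes dissipative) produces the backward semigroup and hence the group. Admissibility of $\bline_2$ for $e^{\aline_{\rm d}t}$ concerns the right boundary only and follows, as in Lemma \ref{dis-est}, from the finite propagation speed, or may be quoted from \cite{GuoXu2007,Nata_etal:16}.

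With the linear theory in hand I would pass to the mild formulation. By admissibility and \cite[Proposition 4.2.5]{obs_book}, for any $g\in L^2_{loc}[0,\infty)$ the map $t\mapsto e^{\aline_{\rm d}t}(w_0,w_1)+\int_0^t e^{\aline_{\rm d}(t-s)}\bline_2 g(s)\,\dd s$ is a continuous $\hline$-valued function, so $(w,w_t)\in C(0,\infty;\hline)$ solves \dref{wave-o-US-N} iff it is a fixed point of the map $\Phi$ that inserts $g(s)=u(s)+f(w(\cdot,s),w_t(\cdot,s))+d(s)$ into this formula. Using $f(0,0)=0$ and the global bound $|f(\xi_1)-f(\xi_2)|\leq L\|\xi_1-\xi_2\|_\hline$, together with the admissibility estimate $\|\int_0^t e^{\aline_{\rm d}(t-s)}\bline_2\rho(s)\,\dd s\|_\hline\leq k(T)\|\rho\|_{L^2[0,T]}$ on a fixed interval $[0,T]$, one obtains $\sup_{[0,T]}\|\Phi(w^1)-\Phi(w^2)\|_\hline\leq k(T)L\sqrt{T}\,\sup_{[0,T]}\|w^1-w^2\|_\hline$. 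Choosing $T$ so small that $k(T)L\sqrt{T}<1$ makes $\Phi$ a contraction on $C([0,T];\hline)$, giving a unique solution on $[0,T]$.

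The decisive feature is that this existence time $T$ depends only on $L$ and on the admissibility constant $k(T)$, \emph{not} on the size of the initial state; since $e^{\aline_{\rm d}t}$ is time-homogeneous, the same $T$ works on every translated interval, so the local solution extends by concatenation over $[0,T],[T,2T],\dots$ to a unique global solution in $C(0,\infty;\hline)$, without even needing a separate Gronwall estimate to exclude finite-time blow-up. I expect the main obstacle to be the linear step, specifically the group generation for the \emph{anti-stable} operator $\aline_{\rm d}$: unlike the skew-adjoint $\aline$ of Proposition \ref{Pro01}, whose unitary group is immediate, here the boundary condition couples the two components, the generator is only dissipative after a bounded shift, and the range condition must be verified by hand through the relation $e^{-2\lambda}=(q-1)/(q+1)$, with the standing hypothesis $q\neq 1$ in force. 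Once generation and admissibility are secured, the semilinear fixed-point part, exactly as in Proposition \ref{Pro01} and Lemma \ref{Lem-ABu}, is entirely routine.
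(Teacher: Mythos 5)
Your overall architecture --- building the damper boundary condition into the generator $\aline_{\rm d}$, keeping only $\bline_2$ as control operator, and then running the global-Lipschitz contraction argument --- is sound, and it correctly identifies the real structural difference from Proposition \ref{Pro01}: the term $-qw_t(0,t)$ cannot be fed through $\bline_1$ as in \dref{Trump}, because the velocity trace $\psi\mapsto\psi(0)$ is not a bounded functional on $\hline$. The fixed-point and concatenation part of your argument is also fine (a minor variant of the weighted-norm contraction in the paper's Appendix). The gap is precisely in the step you call the linear heart: the Lumer--Phillips argument for $\aline_{\rm d}$ fails. Your identity $\Re\langle\aline_{\rm d}(\phi,\psi),(\phi,\psi)\rangle_\hline=q|\psi(0)|^2+\Re[\psi(0)\overline{\phi(0)}]$ is correct, but the asserted bound by $c\,\|(\phi,\psi)\|_\hline^2$ is false, because the trace $|\psi(0)|^2$ is not controlled by $\|\psi\|_{L^2}^2$. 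Concretely, take $\psi_n(x)=n\max\{0,1-nx\}$ and $\phi_n$ with $\phi_n'(x)=-qn\max\{0,1-n^2x\}$, $\phi_n(0)=0$; then $(\phi_n,\psi_n)\in\Dscr(\aline_{\rm d})$, $\|(\phi_n,\psi_n)\|_\hline^2\approx q^2/3+n/3$, while $\Re\langle\aline_{\rm d}(\phi_n,\psi_n),(\phi_n,\psi_n)\rangle_\hline=qn^2$, so no constant $c$ makes $\aline_{\rm d}-cI$ dissipative. This is not a technical nuisance but a structural fact: in the energy norm the anti-damped semigroup is not quasi-contractive, since a wave packet concentrated near $x=0$ is amplified by the reflection coefficient $(1+q)/(1-q)$ (of modulus $>1$ for every $q>0$, $q\neq 1$) in arbitrarily short time, so $\|e^{\aline_{\rm d}t}\|$ stays bounded away from $1$ as $t\to 0^+$ and no shift can repair Lumer--Phillips in this norm. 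For the same reason, your appeal to the time-reversed (damped, contractive) problem only yields that $-\aline_{\rm d}$ (suitably conjugated) generates a semigroup; that alone does not produce the forward semigroup.

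The repair is standard but must be made explicit: either quote the known result that for $q\neq 1$ the anti-damped wave operator generates a $C_0$-group on $\hline$ (see the anti-stable wave literature, e.g.\ \cite{JFF2013tac}, \cite{Krst2010MA}, \cite{BP-Krstic2014}), or prove it via the Riemann invariants $\xi=w_t+w_x$, $\eta=w_t-w_x$, which turn the homogeneous part of \dref{wave-o-US-N} into two transport equations coupled by the reflections $\eta(0,t)=\frac{1+q}{1-q}\,\xi(0,t)$ and $\xi(1,t)=\eta(1,t)$; one can then either solve this system explicitly step by step in time, or introduce an equivalent inner product with $x$-dependent exponential weights on $\xi$ and $\eta$, chosen so that at each endpoint the weight of the outgoing invariant dominates the amplified incoming one, after which the boundary terms in the energy identity have the right sign and the shifted Lumer--Phillips argument does go through. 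Once generation (and hence admissibility of $\bline_2$, by your propagation-speed argument or by citation) is secured, the remainder of your proof is correct; note that the paper itself gives no separate proof for this proposition, only the remark that it ``can be proved along the same lines'' as Proposition \ref{Pro01}, and your write-up makes clear --- correctly --- that those lines need the nontrivial modification at exactly the point where your own argument currently breaks down.
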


\subsection{The disturbance estimator}

We design a disturbance estimator for the system \dref{wave-o-US-N},
that uses the signal $y_{m}(t)=(w(0,t),w(1,t))$, as follows:
\begin{equation} \label{wave-o-v-US-N}
   \left\{\begin{array}{rl} v_{tt}(x,t) \nm &=\m v_{xx}(x,t),\crr
   \disp v_x(0,t) \nm &=\m -qv_t(0,t)+c_1[v(0,t)-w(0,t)],\ \ \
   v_x(1,t) =\m u(t)-W_x(1,t),\crr\disp v(x,0) \nm &=\m v_0(x),\ \ \
   \ \ v_t(x,0) \m=\m v_1(x),\crr\disp z_{tt}(x,t) \nm &=\m z_{xx}
   (x,t),\crr\disp z_x(0,t) \nm &=\m \frac{c_1}{1-c_0} z(0,t)+\frac
   {c_0-q}{1-c_0}z_t(0,t),\ \ \ z(1,t) =\m -v(1,t)-W(1,t)+
   w(1,t),\crr\disp z(x,0) \nm &=\m z_0(x),\ \ \ z_t(x,0) \m=\m z_1
   (x),\crr\disp W_t(x,t) \nm &=\m -W_x(x,t),\crr\disp W(0,t) \nm
   &=\m -c_0[v(0,t)-w(0,t)],\qquad W(x,0) \m=\m W_0(x),
   \end{array}\right.
\end{equation}
where $c_0$ and $c_1$ are two design parameters so that $\frac{c_1}
{1-c_0}>0$ and $\frac{c_0-q}{1-c_0}>0$. The initial state of the
disturbance estimator \dref{wave-o-v-US-N} is \m $(v_0,v_1.z_0,z_1,
W_0)\in \mathbb{H}^2\times H^1(0,1)$. It is clear that the above
disturbance estimator receives as inputs the control input $u$ of
the original system and the two measurement signals $w(0,t)$ and $
w(1,t)$. The ``$(v,W)$-subsystem'' is an auxiliary system which is
used to separate the total disturbance from the original system
\dref{wave-o-US-N} to an exponential system. Indeed, let
\begin{equation} \label{hatv-def-US-N}
   \widehat{v}(x,t) \m=\m v(x,t)-w(x,t) \m.
\end{equation}
Then it is easy to verify that  $(\widehat{v}(x,t),W(x,t))$ satisfies
\begin{equation} \label{wave-o-hatv-P-US-N}
\left\{\begin{array}{l}
\widehat{v}_{tt}(x,t)=\widehat{v}_{xx}(x,t),  \crr\disp
\widehat{v}_x(0,t)=-q\widehat{v}_t(0,t)+c_1\widehat{v}(0,t),  \crr\disp
\widehat{v}_x(1,t)=-f(w(\cdot,t),w_t(\cdot,t))-d(t)-W_x(1,t), \crr\disp
W_t(x,t)=-W_x(x,t), \;\;
W(0,t)=-c_0\widehat{v}(0,t).
\end{array}\right.
\end{equation}
It is seen that the inhomogeneous part of \dref{wave-o-hatv-P-US-N}
is just the total disturbance.

\begin{lemma} \label{Lem-hatz-bd-US-N}
Suppose that $\frac{c_1}{1-c_0}>0$, $\frac{c_0-q}{1-c_0}>0$; $d\in
L^\infty[0,\infty)$, (or $d\in L^2[0,\infty)$), $f:H^1(0,1)\times L^2
(0,1)\to\rline$ is continuous and that {\rm\dref{wave-o-US-N}} admits a
unique solution $(w,\dot{w})\in C(0,\infty;\hline)$ which is bounded.
For any initial value $(\widehat{v}_0,\widehat{v}_1,W_0)\in\hline\times
H^1(0,1)$ with the compatibility condition $W_0(0)=-c_0\widehat{v}_0
(0)$, there exists a unique solution $(\widehat{v},\widehat{v}_t,W)\in
C(0,\infty;\hline\times H^1(0,1))$ to {\rm\dref{wave-o-hatv-P-US-N}} such
that
\begin{equation} \label{lem-v-W-bd-US-N}
   \sup_{t\geq 0}\|(\widehat{v}(\cdot,t),\widehat{v}_t(\cdot,t),
   W(\cdot,t))\|_{\hline\times H^1(0,1)} < \infty \m.
\end{equation}
If we assume further that  $\lim_{t\to\infty}|f(w,w_t)|=0$ and $d\in
L^2[0,\infty)$, then
\begin{equation} \label{lem-v-W-bd0-US-N}
   \lim_{t\to\infty} \|(\widehat{v}(\cdot,t),\widehat{v}_t(\cdot,t),
   W(\cdot,t))\|_{\hline\times H^1(0,1)} \m=\m 0 \m.
\end{equation}
If we assume that $f\equiv 0$ and $d\equiv 0$, then there exist two
constants $M,\mu>0$ such that
\begin{equation} \label{lem-v-W-bd-decay-US-N}
   \|(\widehat{v}(\cdot,t),\widehat{v}_t(\cdot,t),W(\cdot,t))
   \|_{\hline\times H^1(0,1)} \m\leq\m Me^{-\mu t}\ \ \ \forall
   \m t\geq 0 \m.
\end{equation}
\end{lemma}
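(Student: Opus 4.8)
The plan is to reduce this lemma to Lemma \ref{Lem-hatz-bd} by a change of variables that absorbs the negative damper at $x=0$ into a dissipative Robin-type boundary condition. Following \dref{Shavuot}, I would set $p(x,t) = -\widehat{v}(x,t) - W(x,t)$ and introduce the constants $\widetilde{c}_0 = (c_0-q)/(1-c_0)$ and $\widetilde{c}_1 = c_1/(1-c_0)$; the hypotheses $c_1/(1-c_0) > 0$ and $(c_0-q)/(1-c_0) > 0$ are exactly the conditions $\widetilde{c}_1 > 0$, $\widetilde{c}_0 > 0$. Since $W(0,t) = -c_0\widehat{v}(0,t)$, the last line of \dref{wave-o-hatv-P-US-N} gives $p(0,t) = -(1-c_0)\widehat{v}(0,t)$, hence $\widehat{v}(0,t) = -p(0,t)/(1-c_0)$ and $\widehat{v}_t(0,t) = -p_t(0,t)/(1-c_0)$.

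The key computation is the transformation of the boundary condition at $x=0$. Using $W_t = -W_x$ together with $W(0,t) = -c_0\widehat{v}(0,t)$ one gets $W_x(0,t) = -W_t(0,t) = c_0\widehat{v}_t(0,t)$, so that $p_x(0,t) = -\widehat{v}_x(0,t) - W_x(0,t) = -\widehat{v}_x(0,t) - c_0\widehat{v}_t(0,t)$. Substituting the damper condition $\widehat{v}_x(0,t) = -q\widehat{v}_t(0,t) + c_1\widehat{v}(0,t)$ and then the expressions for $\widehat{v}(0,t)$, $\widehat{v}_t(0,t)$ in terms of $p$, I expect to obtain precisely $p_x(0,t) = \widetilde{c}_1 p(0,t) + \widetilde{c}_0 p_t(0,t)$; here the coefficient $(q - c_0)$ of $\widehat{v}_t(0,t)$ is what produces $\widetilde{c}_0$, which is the one place where the negative damper (rather than the negative spring of \dref{wave-o}) enters. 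At $x=1$, using the third line of \dref{wave-o-hatv-P-US-N}, the term $W_x(1,t)$ cancels and $p_x(1,t) = F(t)$. Thus the ``$p$-part'' satisfies exactly the system \dref{wave-o-hatv} (with the redefined $\widetilde{c}_0, \widetilde{c}_1$), and the ``$W$-part'' is the same transport equation $W_t = -W_x$ with $W(0,t) = \frac{c_0}{1-c_0}p(0,t)$.

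Once this reduction is established, the remainder of the argument is identical to the proof of Lemma \ref{Lem-hatz-bd}, which I would cite rather than repeat. Because $\widetilde{c}_0, \widetilde{c}_1 > 0$, the operator $\Ascr$ of \dref{bbAep-def} generates an exponentially stable semigroup with $\Bscr$ admissible (\cite{GuoXu2007}), so Lemma \ref{Lem-ABu} applied to the ``$p$-part'' yields \dref{til-v-bd} when $F \in L^\infty[0,\infty)$, \dref{til-v-bd0} when $f(w,w_t) \to 0$ and $d \in L^2[0,\infty)$, and exponential decay when $f \equiv 0$, $d \equiv 0$; in the bounded case I would also use that $f(w,w_t) \in L^\infty[0,\infty)$ since $f$ is continuous and $(w,\dot w)$ is bounded. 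The bound on $\|W(\cdot,t)\|_{H^1(0,1)}$ follows from the multiplier estimate \dref{til-v-bdt-bd} exactly as before, now with the proportionality constant $c_0/(1-c_0)$ in place of $\widetilde{c}_0$ in \dref{W-bd0-est}, \dref{W-sol} and \dref{W-x-est}. Finally $\widehat{v} = -p - W$ and $W_t = -W_x$ give \dref{lem-v-W-bd-US-N}, \dref{lem-v-W-bd0-US-N} and \dref{lem-v-W-bd-decay-US-N} in the three respective cases. I expect the only genuinely new work to be the boundary computation of the second paragraph; the main thing to watch is the bookkeeping of signs in $W_x(0,t) = c_0\widehat{v}_t(0,t)$ and the correct appearance of $(c_0 - q)$, since an error there would break the match with \dref{wave-o-hatv}.
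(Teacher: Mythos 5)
Your proposal is correct and follows essentially the same route as the paper: the paper's proof also introduces $p=-\widehat v-W$, derives the system \dref{wave-o-hatv-US-N} (whose $p$-part is \dref{wave-o-hatv} with $\frac{c_0}{1-c_0}$ replaced by $\frac{c_0-q}{1-c_0}$ in the boundary condition at $x=0$, while the $W$-part keeps $\frac{c_0}{1-c_0}$), and then declares the rest identical to the proof of Lemma \ref{Lem-hatz-bd}. Your explicit verification of the boundary computation at $x=0$ and your remark that the transport equation's coefficient stays $c_0/(1-c_0)$ are details the paper leaves implicit, but the argument is the same.
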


\begin{proof} First we introduce a new variable $p(x,t)=
-\widehat{v}(x,t)-W(x,t)$, then $(p(x,t),W(x,t))$ satisfies
\begin{equation} \label{wave-o-hatv-US-N}
   \left\{\begin{array}{rl} p_{tt}(x,t) \nm &=\m
   p_{xx}(x,t),\crr\disp p_x(0,t) \nm &=\m
   \frac{c_1}{1-c_0}p(0,t) + \frac{c_0-q}{1-c_0} \m
   p_t(0,t),\crr\disp p_x(1,t) \nm &=\m f(w(\cdot,t),w_t(\cdot,t))+d(t),\crr\disp W_t(x,t) \nm &=\m -W_x
   (x,t),\ \ \ \ W(0,t) \m=\m \frac{c_0}{1-c_0}p(0,t),
   \end{array}\right.
\end{equation}
with the initial state \m $p(x,0)=-\widehat{v}(x,0)
-W(x,0)$, \m $p_t(x,0)=-\widehat{v}_t(x,0)+W_x(x,0)$ and
\m $W(x,0)=W(x,0)$.
Comparing \dref{wave-o-hatv-US-N} with \dref{wave-o-hatv}, it is seen
that \dref{wave-o-hatv-US-N} is exactly the same as the system
\dref{wave-o-hatv} by replacing $\frac{c_0}{1-c_0}$ with
$\frac{c_0-q}{1-c_0}$. Thus, the rest of the proof of this lemma is
exactly the same as for Lemma \ref{Lem-hatz-bd}. \end{proof}

Let \vspace{-2mm}
\begin{equation} \label{tildez-def-US-N}
   \beta(x,t) \m=\m z(x,t)-p(x,t).
\end{equation}
Then we can see that $\beta(x,t)$ is governed by
\begin{equation} \label{wave-o-tildz-US-N}
   \left\{\begin{array}{l} \beta_{tt}(x,t) \m=\m
   \beta_{xx}(x,t),\crr\disp \beta_x(0,t) \m=\m
   \frac{c_1}{1-c_0}\beta(0,t)+\frac{c_0-q}{1-c_0}
   \beta_t(0,t),\ \ \ \beta(1,t) \m=\m 0,
\end{array}\right.
\end{equation}
which is exactly the same as the system \dref{wave-o-tildz} by
replacing $\frac{c_0}{1-c_0}$ with $\frac{c_0-q}{1-c_0}$. Therefore,
it follows from Lemma \ref{dis-est} and Remark \ref{Rem-k-est-d} that
$z_x(1,t)$ can be regarded as an estimate of the total disturbance
$F(t)=f(w(\cdot,t),w_t(\cdot,t))+d(t)$, that is, $z_x(1,t)\approx
F(t)$.

\subsection{Controller and observer design}

In this subsection we investigate the following state observer for
the system \dref{wave-o-US-N}:
\begin{equation} \label{wave-o-obser-US-N}
   \left\{\begin{array}{rl} \widehat{w}_{tt}(x,t) \nm &=\m \widehat
   {w}_{xx}(x,t),\crr\disp \widehat{w}_x(0,t) \nm &=\m -q\widehat{w}
   _t(0,t) + c_1[\widehat{w}(0,t)-w(0,t)],\crr\disp \widehat{w}_x(1,t)
   \nm &=\m u(t)+z_x(1,t)-Y_x(1,t),\crr\disp \widehat{w}(x,0) \nm
   &=\m \widehat{w}_0(x),\ \ \ \widehat{w}_t(x,0) \m=\m \widehat{w}_1
   (x),\crr\disp Y_t(x,t) \nm &=\m -Y_x(x,t),\crr\disp Y(0,t) \nm
   &=\m -c_0[\widehat{w}(0,t)-w(0,t)],\ \ \ Y(x,0) \m=\m Y_0(x),
   \end{array}\right.
\end{equation}
where $x\in[0,1]$, $t\geq 0$ and $c_1,\m c_2$ are design parameters
that are the same as in \dref{wave-o-v-US-N}. Here $z_x(1,t)$ plays
the role of total disturbance. To show the asymptotic convergence of
the observer above, define
\begin{equation} \label{varep-def-US-N}
   \e(x,t) \m=\m \widehat{w}(x,t)-w(x,t).
\end{equation}
Then it is easy to see that $(\varepsilon(x,t),Y(x,t))$ is
governed by
\begin{equation} \label{wave-o-obser-err-P-US-N}
   \left\{\begin{array}{l} \e_{tt}(x,t) \m=\m \e_{xx}(x,t),\crr
   \disp \e_x(0,t) \m=\m -q\e_t(0,t) + c_1\e(0,t),\ \
   \e_x(1,t) \m=\m \beta_x(1,t)-Y_x(1,t),\crr\disp
   Y_t(x,t) \m=\m -Y_x(x,t),\ \ \ Y(0,t) \m=\m -c_0\e(0,t).
   \end{array}\right.
\end{equation}

\begin{lemma} \label{lem-varepto0-US-N}
Suppose that $\frac{c_0-q}{1-c_0}>0$, $\frac{c_1}{1-c_0}>0$ and the
signal $\widetilde{z}_x(1,t)$ is determinated by system
{\rm\dref{wave-o-tildz}}. Then for any initial state $(\e(\cdot,0),\e_t
(\cdot,0),Y(\cdot,0))\in\hline\times H^1(0,1)$ with the
compatibility condition $Y(0,0)=-c_0\e(0,0)$, there exists a unique
solution to {\rm\dref{wave-o-obser-err-P-US-N}} such that
$(\e,\e_t,Y)\in C(0,\infty;\hline\times H^1(0,1))$ satisfying
\begin{equation} \label{varepsi-to0-US-N}
   \lim_{t\to\infty} \|(\e(\cdot,t),\e_t(\cdot,t),Y(\cdot,t))\|
   _{\hline\times H^1(0,1)} \m=\m 0 \m.
\end{equation}
\end{lemma}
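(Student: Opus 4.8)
The plan is to reduce the damper system \dref{wave-o-obser-err-P-US-N} to the same abstract form already handled in the spring case (Lemma \ref{lem-varepto0}), exploiting the fact, noted in the text, that the $\beta$-dynamics \dref{wave-o-tildz-US-N} differ from \dref{wave-o-tildz} only by replacing $\frac{c_0}{1-c_0}$ with $\frac{c_0-q}{1-c_0}$. First I would introduce the shifted variable $\widetilde\e(x,t)=\e(x,t)+Y(x,t)$, exactly as in \dref{tvarep-def}. Since $Y_t=-Y_x$ implies $Y_{tt}=Y_{xx}$, the function $\widetilde\e$ again solves the wave equation. The boundary relations $Y(0,t)=-c_0\e(0,t)$ and $Y_x(0,t)=-Y_t(0,t)=c_0\e_t(0,t)$, together with $\widetilde\e(0,t)=(1-c_0)\e(0,t)$ and $\widetilde\e_t(0,t)=(1-c_0)\e_t(0,t)$, turn the left boundary condition $\e_x(0,t)=-q\e_t(0,t)+c_1\e(0,t)$ into $\widetilde\e_x(0,t)=\frac{c_1}{1-c_0}\widetilde\e(0,t)+\frac{c_0-q}{1-c_0}\widetilde\e_t(0,t)$, while $\widetilde\e_x(1,t)=\beta_x(1,t)$ and $Y(0,t)=-\frac{c_0}{1-c_0}\widetilde\e(0,t)$. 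Thus $(\widetilde\e,Y)$ obeys a system of exactly the form \dref{Boris}, but with the damper coefficients $\widetilde c_0=\frac{c_0-q}{1-c_0}$ and $\widetilde c_1=\frac{c_1}{1-c_0}$.

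Next I would write the $\widetilde\e$-part abstractly as $\frac{\dd}{\dd t}(\widetilde\e,\widetilde\e_t)=\Ascr(\widetilde\e,\widetilde\e_t)+\Bscr\beta_x(1,t)$, where $\Ascr,\Bscr$ are the operators \dref{bbAep-def} evaluated at the damper coefficients above. Because the standing assumptions guarantee $\widetilde c_0>0$ and $\widetilde c_1>0$, the cited result \cite[Theorem 2.1]{GuoXu2007} still applies and shows that $\Ascr$ generates an exponentially stable semigroup on $\hline$ for which $\Bscr$ is admissible. The forcing term $\beta_x(1,t)$ is the output of the exponentially stable $\beta$-system \dref{wave-o-tildz-US-N}, so by Lemma \ref{dis-est} and Remark \ref{Rem-k-est-d} (which apply verbatim after the coefficient substitution) we have $\beta_x(1,t)\in L^2[0,\infty)$. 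Lemma \ref{Lem-ABu} then yields a unique solution with $\lim_{t\to\infty}\|(\widetilde\e(\cdot,t),\widetilde\e_t(\cdot,t))\|_\hline=0$.

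Finally I would recover $\e=\widetilde\e-Y$ and control $Y$ through its transport structure. Solving $Y_t=-Y_x$ with $Y(0,t)=-\frac{c_0}{1-c_0}\widetilde\e(0,t)$ gives, for $t\ge x$, $Y(x,t)=-\frac{c_0}{1-c_0}\widetilde\e(0,t-x)$, so that $\|Y(\cdot,t)\|_{H^1(0,1)}^2$ is a constant multiple of $\widetilde\e(0,t)^2+\int_0^1\widetilde\e_t^2(0,t-x)\dd x$. The second term is handled by the same multiplier estimate used in Lemma \ref{Lem-hatz-bd}, namely via $\rho(t)=2\int_0^1(x-1)\widetilde\e_t\widetilde\e_x\dd x$, which bounds $\int_{t-1}^t\widetilde\e_s^2(0,s)\dd s$ by the energy of $(\widetilde\e,\widetilde\e_t)$ over $[t-1,t]$; since that energy tends to $0$, so does $\|Y(\cdot,t)\|_{H^1(0,1)}$, and with $\e=\widetilde\e-Y$ this gives \dref{varepsi-to0-US-N}. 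In fact this last step is exactly the argument that established \dref{lem-v-W-bd0} in Lemma \ref{Lem-hatz-bd} (equivalently the closing paragraph of Lemma \ref{lem-varepto0}), under the substitution $\widehat v,W,p,F\mapsto\e,Y,-\widetilde\e,-\beta_x$. The one point that genuinely needs checking beyond the spring case is the persistence of exponential stability and admissibility for $\Ascr$ under the damper coefficient $\widetilde c_0=\frac{c_0-q}{1-c_0}$; this is precisely why the hypotheses $\frac{c_0-q}{1-c_0}>0$ and $\frac{c_1}{1-c_0}>0$ are imposed, and it is the main (indeed only) obstacle, all remaining computations being formally identical to those in Lemma \ref{lem-varepto0}.
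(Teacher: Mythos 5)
Your proposal is correct and follows essentially the same route as the paper: the substitution $\widetilde\e=\e+Y$ reducing \dref{wave-o-obser-err-P-US-N} to a \dref{Boris}-type system with the damper coefficients $\frac{c_1}{1-c_0},\frac{c_0-q}{1-c_0}$, then the abstract semigroup argument (exponential stability plus admissibility, $\beta_x(1,\cdot)\in L^2[0,\infty)$ via Lemma \ref{dis-est} and Remark \ref{Rem-k-est-d}, Lemma \ref{Lem-ABu}), and finally the transport/multiplier estimate for the $Y$-part. The paper merely compresses these steps by citing the proof of Lemma \ref{lem-varepto0} after the coefficient substitution, whereas you unpack them explicitly; the content is identical.
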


\begin{proof} Introduce a new variable $\widetilde\e(x,t)=\e(x,t)+
Y(x,t)$. Then $(\widetilde\e(x,t),Y(x,t))$ satisfies
\begin{equation} \label{wave-o-obser-err-US-N}
   \left\{\begin{array}{l} \widetilde\varepsilon_{tt}(x,t) =
   \widetilde\e_{xx}(x,t),\crr\disp \widetilde\e_x(0,t) =
   \widetilde c_1\widetilde\e(0,t) + \frac{c_0-q}{1-c_0}
   \widetilde\e_t(0,t),\ \ \ \widetilde\e_x(1,t) = \beta
   _x(1,t),\crr\disp Y_t(x,t) = -Y_x(x,t),\ \ \ Y(0,t)=-\widetilde
   c_0\widetilde\e(0,t),\end{array}\right.
\end{equation}
with the initial state $\widetilde\e(x,0)=\e(x,0)+Y(x,0),\;
\widetilde\e_t(x,0)=\e_t(x,0)-Y_x(x,0),\;Y(x,0)=Y(x,0)$. Comparing
\dref{wave-o-obser-err-US-N} with \dref{Boris}, we see that
\dref{wave-o-obser-err-US-N} is exactly the same as the system
\dref{Boris} by replacing $\widetilde c_0$  with $\frac{c_0-q}
{1-c_0}$ for ``$\widetilde\e$-part''. Thus, according to the proof
of Lemma \ref{lem-varepto0}, we can conclude that
\dref{wave-o-obser-err-P-US-N} admits a unique solution satisfying
\dref{varepsi-to0-US-N}.
\end{proof}

By Lemma \ref{lem-varepto0-US-N}, \dref{wave-o-obser-US-N} is indeed
an observer of \dref{wave-o-US-N}. To find a stabilizing control law
for system \dref{wave-o-US-N}, we introduce the following auxiliary
system (here $t\geq 0$ and $x\in[0,1]$):
$$ \left\{\begin{array}{l}
   Z_t(x,t) \m=\m -Z_x(x,t),\crr\disp Z(0,t) \m=\m -c_2\widehat{w}
   (0,t),\qquad Z(x,0) \m=\m Z_0(x). \end{array}\right.$$
Now we introduce the new variable \m $\widetilde{w}(x,t)=\widehat{w}
(x,t)+Z(x,t)$. Then $(\widetilde{w},Z)$ satisfies
\begin{equation} \label{wave-o-obser-trans-tw-US-N}
   \left\{\begin{array}{l} \widetilde{w}_{tt}(x,t) \m=\m \widetilde
   {w}_{xx}(x,t),\crr\disp \widetilde{w}_x(0,t) \m=\m \frac{c_2-q}
   {1-c_2}\widetilde{w}_t(0,t)+c_1\e(0,t),\crr\disp \widetilde{w}
   _x(1,t) \m=\m u(t)+z_x(1,t)-Y_x(1,t)+Z_x(1,t),\crr\disp
   Z_t(x,t) \m=\m -Z_x(x,t),\ \ \ Z(0,t) \m=\m -\frac{c_2}{1-c_2}
   \widetilde{w}(0,t). \end{array}\right.
\end{equation}
We see that the exponential stability of system
\dref{wave-o-obser-trans-tw-US-N} is equivalent to the exponential
stability of \dref{wave-o-obser-US-N}. We propose the following
observer-based feedback controller:
\begin{equation} \label{con-out-US-N}
   u(t) \m=\m -c_3\widehat{w}(1,t)-c_3Z(1,t)-z_x(1,t)+Y_x(1,t)
   -Z_x(1,t).
\end{equation}
The closed-loop system formed by \dref{wave-o-obser-trans-tw-US-N}
with the controller \dref{con-out-US-N} becomes \vspace{-1mm}
\begin{equation} \label{wave-o-obser-trans-tw-closed-US-N}
   \left\{\begin{array}{l} \widetilde{w}_{tt}(x,t) \m=\m \widetilde
   {w}_{xx}(x,t),\crr\disp \widetilde{w}_x(0,t) = \frac{c_2-q}
   {1-c_2}\widetilde{w}_t(0,t)+c_1\varepsilon(0,t), \;\;
   \widetilde{w}_x(1,t) \m=\m -c_3\widetilde{w}(1,t),\crr\disp
   Z_t(x,t) \m=\m -Z_x(x,t),\ \ \ Z(0,t) \m=\m -\frac{c_2}{1-c_2}
   \widetilde{w}(0,t). \end{array}\right.
\end{equation}
The closed-loop of observer \dref{wave-o-obser-US-N} corresponding
to controller \dref{con-out-US-N} becomes
\begin{equation} \label{wave-o-obser-trans-hatw-closed-US-N}
   \left\{\begin{array}{l} \widehat{w}_{tt}(x,t)=\widehat{w}_{xx}
   (x,t),\crr\disp \widehat{w}_x(0,t) = -q\widehat{w}_t(0,t) + c_1
   [\widehat{w}(0,t)-w(0,t)],\crr\disp \widehat{w}_x(1,t) = -c_3
   \widehat{w}(1,t)-c_3Z(1,t)-Z_x(1,t),\crr\disp Y_t(x,t) = -Y_x
   (x,t),\ \ \ Y(0,t) = -c_0[\widehat{w}(0,t)-w(0,t)],\crr\disp
   Z_t(x,t) = -Z_x(x,t),\ \ \ Z(0,t) = -\frac{c_2}{1-c_2}
   \widetilde{w}(0,t).\end{array}\right.
\end{equation}
To show the exponential stability of system
\dref{wave-o-obser-trans-tw-US-N} under the feedback
\dref{con-out-US-N}, we consider the overall system
\dref{wave-o-obser-err-P-US-N}, \dref{wave-o-tildz-US-N} and
\dref{wave-o-obser-trans-tw-closed-US-N} described by
\begin{equation} \label{Obama}
   \left\{\begin{array}{l} \e_{tt}(x,t) \m=\m \e_{xx}(x,t),\crr\disp
   \e_x(0,t) \m=\m -q\e_t(0,t)+c_1\e(0,t),\ \ \ \e_x(1,t) \m=\m
   \beta_x(1,t)-Y_x(1,t),\crr\disp Y_t(x,t) \m=\m -Y_x(x,t),\ \ \
   Y(0,t) \m=\m -c_0\e(0,t),\crr\disp \beta_{tt}(x,t) \m=\m \beta
   _{xx}(x,t),\crr\disp \beta_x(0,t) \m=\m \frac{c_1}{1-c_0}\beta
   (0,t) + \frac{c_0-q}{1-c_0}\beta_t(0,t),\ \ \ \beta(1,t) \m=\m 0,
   \crr\disp \widetilde{w}_{tt}(x,t) \m=\m \widetilde{w}_{xx}(x,t),
   \crr\disp \widetilde{w}_x(0,t) \m=\m \frac{c_2-q}{1-c_2}
   \widetilde{w}_t(0,t) + c_1\varepsilon(0,t),\ \ \ \widetilde{w}_x
   (1,t) \m=\m -c_3 \widetilde{w}(1,t),\crr\disp Z_t(x,t) \m=\m -Z_x
   (x,t),\ \ \ Z(0,t) \m=\m -\frac{c_2}{1-c_2}\widetilde{w}(0,t),
   \end{array}\right.
\end{equation}
in the state space $\Xscr=\hline\times H^1(0,1)\times H^1_R(0,1)
\times L^2(0,1)\times\hline\times H^1(0,1)$.

\begin{theorem} \label{calADAto0-US-N}
Suppose that $\frac{c_1}{1-c_0}>0$, $\frac{c_0-q}{1-c_0}>0$, $\frac
{c_2-q}{1-c_2}>0$ and $c_3>0$. For any initial state $(\widetilde
\e_0,\widetilde\e_1,Y_0,\beta_0,\beta_t$, $\widetilde{w}_0,
\widetilde{w}_1,Z)\in\Xscr$, with the compatibility conditions
$Y_0(0)=-c_0\widetilde\e_0(0)$, $Z_0(0)=-\frac{c_2}{1-c_2}
\widetilde{w}_0(0)$, the system {\rm\dref{Obama}} admits a unique
solution $(\e,\e_t,Y,\beta,\beta_t,\widetilde{w},\widetilde{w}_t,Z)
\in C(0,\infty;\Xscr)$ and there exist two constants $M,\mu>0$ such
that
\begin{equation} \label{epYtztw-to0-US-N}
   \|(\e(\cdot,t),\e_t(\cdot,t),Y(\cdot,t),\beta(\cdot,t),
   \beta_t(\cdot,t),\widetilde{w}(\cdot,t),\widetilde{w}_t
   (\cdot,t),Z(\cdot,t))\|_{\Xscr} \leq Me^{-\mu t}.
\end{equation}
\end{theorem}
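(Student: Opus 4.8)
The plan is to follow the proof of Lemma \ref{calADAto0} almost verbatim, exploiting the cascade structure of \dref{Obama} and reducing the two new features (the ``damper-type'' boundary conditions at $x=0$ and the extra transport variable $Z$) to tools already in hand. First I would set $\widetilde\e = \e + Y$, which turns the ``$(\e,Y)$-part'' of \dref{Obama} into the form \dref{wave-o-obser-err-US-N}, and then introduce $\eta = \widetilde\e - \beta$. Using $\widetilde c_1 = c_1/(1-c_0)$ together with the common coefficient $(c_0-q)/(1-c_0)$ appearing at $x=0$, a direct check shows that $\eta$ satisfies a wave equation with $\eta_x(0,t) = \widetilde c_1\eta(0,t) + \frac{c_0-q}{1-c_0}\eta_t(0,t)$ and $\eta_x(1,t)=0$, while $\beta$ satisfies the same type of equation with $\beta(1,t)=0$. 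The key point is that this ``$(\eta,\beta)$-subsystem'' decouples from the rest, and by \cite[Theorem 2.1]{GuoXu2007} and \cite[Theorem 3]{ChengG1979} it is exponentially stable, so $\|(\eta,\eta_t,\beta,\beta_t)\|$ decays like $e^{-\mu_1 t}$.

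Next I would transfer this decay along the cascade. From the Sobolev embedding, $|\eta(0,t)+\beta(0,t)|$ decays exponentially; since evaluating $\widetilde\e = \e+Y$ at $x=0$ with $Y(0,t)=-c_0\e(0,t)$ gives $\widetilde\e(0,t)=(1-c_0)\e(0,t)$, the driving signal satisfies $\e(0,t)=\frac{1}{1-c_0}(\eta(0,t)+\beta(0,t))$ and hence also decays exponentially. Writing the ``$\widetilde{w}$-part'' of \dref{Obama} in abstract form as $\frac{\dd}{\dd t}(\widetilde{w},\widetilde{w}_t) = A(\widetilde{w},\widetilde{w}_t) + B\e(0,t)$, where $A$ is the wave operator carrying a passive damper at $x=0$ (with coefficient $\frac{c_2-q}{1-c_2}>0$) and the condition $\widetilde{w}_x(1,t)=-c_3\widetilde{w}(1,t)$, the energy $E=\half\int_0^1(\widetilde{w}_t^2+\widetilde{w}_x^2)\dd x+\frac{c_3}{2}\widetilde{w}(1,t)^2$ satisfies $\dot E = -\frac{c_2-q}{1-c_2}\widetilde{w}_t(0,t)^2\le 0$ (the $x=1$ boundary terms cancel), so by a standard multiplier argument $A$ generates an exponentially stable semigroup, while $B\sim(0,-\delta_0)$ is an admissible control operator. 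Lemma \ref{Lem-ABu} then gives $\|(\widetilde{w}(\cdot,t),\widetilde{w}_t(\cdot,t))\|_\hline\le M_2 e^{-\mu_2 t}$.

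It remains to handle the two transport parts, and here I would reuse the representation by characteristics and the sliding-window estimates from the proof of Lemma \ref{calADAto0} (compare \dref{Y-wp-sol}--\dref{tepsi-intval-exp-to0}). Differentiating the transport solutions shows that $\|Y(\cdot,t)\|_{H^1}$ and $\|Z(\cdot,t)\|_{H^1}$ are controlled by $\int_{t-1}^t\e_s(0,s)^2\dd s$ and $\int_{t-1}^t\widetilde{w}_s(0,s)^2\dd s$, respectively. For the first, the multiplier $\rho(t)=2\int_0^1(x-1)[\eta_t\eta_x+\beta_t\beta_x]\dd x$ yields, after integrating $\dot\rho$ over $[t-1,t]$ and invoking the exponential decay of $(\eta,\beta)$, an exponential bound on $\int_{t-1}^t\e_s(0,s)^2\dd s$, hence on $\|Y(\cdot,t)\|_{H^1}$. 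For the second, the analogous multiplier $2\int_0^1(x-1)\widetilde{w}_t\widetilde{w}_x\dd x$ produces $\dot\rho\ge \widetilde{w}_t(0,t)^2-\|(\widetilde{w},\widetilde{w}_t)\|_\hline^2$, which together with the decay of $\|(\widetilde{w},\widetilde{w}_t)\|_\hline$ bounds $\int_{t-1}^t\widetilde{w}_s(0,s)^2\dd s$, hence $\|Z(\cdot,t)\|_{H^1}$, exponentially. Combining all estimates through $\e=\widetilde\e-Y=\eta+\beta-Y$ gives \dref{epYtztw-to0-US-N}; existence, uniqueness and continuity in $C(0,\infty;\Xscr)$ follow from semigroup generation for each subsystem and the cascade dependence.

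The hardest step is the $Z$-transport, whose input is the \emph{displacement} trace $\widetilde{w}(0,t)$: differentiating the transport solution converts it into the \emph{velocity} trace $\widetilde{w}_t(0,t)$, which must then be estimated in $L^2$ over the window $[t-1,t]$ by a multiplier identity on the \emph{inhomogeneously driven} ``$\widetilde{w}$-part''. Care is needed to confirm that the boundary input $c_1\e(0,t)$ entering through $\widetilde{w}_x(0,t)$ does not spoil the one-sided bound $\dot\rho\ge\widetilde{w}_t(0,t)^2-\|(\widetilde{w},\widetilde{w}_t)\|_\hline^2$, which is why one retains only $\widetilde{w}_t(0,t)^2$ and simply discards the nonnegative full boundary term $\widetilde{w}_x(0,t)^2$.
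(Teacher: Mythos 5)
Your proposal is correct and follows essentially the same route as the paper's own proof: the same change of variables $\eta=\e+Y-\beta$ decoupling an exponentially stable $(\eta,\beta)$-subsystem, the same abstract semigroup-plus-admissibility argument via Lemma \ref{Lem-ABu} for the $\widetilde{w}$-part (the paper cites \cite[Proposition 2]{JFF2013tac} where you sketch the energy identity), and the same characteristics representation with the multiplier $\rho(t)=2\int_0^1(x-1)\widetilde{w}_t\widetilde{w}_x\,\dd x$ for the transport variables $Y$ and $Z$. Your explicit remark that the boundary input $c_1\e(0,t)$ is harmless because $\widetilde{w}_x^2(0,t)$ enters $\dot\rho$ with a favorable sign and can simply be discarded supplies precisely the detail the paper omits when it declares the proof of \dref{tw-intval-exp-to0-US-N} ``very similar'' to that of \dref{tepsi-intval-exp-to0}.
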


\begin{proof} We see that the ``$(\e,Y,\beta)$-part'' of \dref{Obama}
is independent of the ``$(\widetilde{w},Z)$-part'' of \dref{Obama}. We
first consider the ``$(\e,Y,\beta)$-part'' of \dref{Obama}. Denote
$\eta(x,t)=\e(x,t)+Y(x,t)-\beta(x,t) $. It is easy to check that
$(\eta(x,t),Y(x,t),\beta(x,t))$ satisfies
\begin{equation} \label{Theresa_May}
   \left\{\begin{array}{l} \eta_{tt}(x,t) = \eta_{xx}(x,t),\crr\disp
   \eta_x(0,t) = \frac{c_1}{1-c_0}\eta(0,t)+\frac{c_0-q}{1-c_0}\eta_t
   (0,t),\ \ \eta_x(1,t) = 0,\crr\disp Y_t(x,t) = -Y_x(x,t),\ \
   Y(0,t) = -\frac{c_0}{1-c_0}[\eta(0,t)+\beta(0,t)],\crr
   \disp \beta_{tt}(x,t) = \beta_{xx}(x,t),\crr\disp
   \beta_x(0,t)=\frac{c_1}{1-c_0}\beta(0,t) +
   \frac{c_0-q}{1-c_0}\beta_t(0,t),\ \ \beta(1,t) =
   0 \m.\end{array}\right.
\end{equation}
Comparing \dref{Theresa_May} with \dref{Tillerson} and noting that
$\frac{c_1}{1-c_0}>0$, $\frac{c_0-q}{1-c_0}>0$, the system
\dref{Theresa_May} is exactly the same as the system \dref{Tillerson}
after 
replacing $\frac{c_0}{1-c_0}>0$ with $\frac{c_0-q}{1-c_0}>0$. Thus, by Theorem
\ref{calADAto0}, we can conclude that \dref{Theresa_May} admits a
unique solution and there exist two constants $M_1,\mu_1>0$ such that
\begin{equation} \label{epYtztw-part-to0-US-N}
   \|(\e(\cdot,t),\e_t(\cdot,t),\beta(\cdot,t),
   \beta_t(\cdot,t),Y(\cdot,t))\|_{\hline\times H^1_R(0,1)
   \times L^2(0,1)\times H^1(0,1)} \leq M_1e^{-\mu_1 t}.
\end{equation}
Now, we consider the ``$(\widetilde{w},Z)$-part'' of \dref{Obama}
which reads as
\begin{equation} \label{th-pf-wZ-part-US-N}
   \left\{\begin{array}{l} \widetilde{w}_{tt}(x,t) =\widetilde{w}
   _{xx}(x,t),\crr\disp \widetilde{w}_x(0,t)=\frac{c_2-q}{1-c_2}
   \widetilde{w}_t(0,t)+c_1\varepsilon(0,t),\ \ \widetilde{w}_x(1,t)
   = -c_3\widetilde{w}(1,t),\crr\disp Z_t(x,t) = -Z_x(x,t),\ \
   Z(0,t) = -\frac{c_2}{1-c_2}\widetilde{w}(0,t),\end{array}\right.
\end{equation}
By Sobolev embedding theorem and \dref{epYtztw-part-to0-US-N}, we have
\begin{equation} \label{ep-tz-exp-decay-US-N}
   \begin{array}{l} |\e(0,t)| \leq \|\e(0,t)\|_{H^1(0,1)} \leq
   \|(\e(\cdot,t),\e_t(\cdot,t)\|_{\hline} \leq M_1e^{-\mu_1t}.
   \end{array}
\end{equation}
We can write ``$\widetilde{w}$-part'' of \dref{th-pf-wZ-part-US-N} as
$$ \frac{\dd}{\dd t}(\widetilde{w}(\cdot,t),\widetilde{w}_t
   (\cdot,t)) = A_0(\widetilde{w}(\cdot,t),\widetilde{w}_t
   (\cdot,t)) + B_0\e(0,t),$$
where the operators $A_0$ and $B_0$ are given by
\begin{equation} \label{A0-def}
   \left\{ \begin{array}{l}\disp A_0(\phi,\psi) \m=\m (\psi,\phi'')
   \FORALL (\phi,\psi)\in\Dscr(A_0),\crr\disp \Dscr(A_0) = \bigg\{
   (\phi,\psi)\in H^2(0,1)\times H^1(0,1)\ |\ \phi'(0)=\frac{c_2-q}
   {1-c_2}\psi(0),\;\phi'(1)=-c_3\phi(1)\bigg\},\end{array}\right.
\end{equation}
and $B_0=c_1(0,-\delta_0)$. It is well known (\cite[Proposition 2]
{JFF2013tac}) that $A_{0}$ generates an exponential stable operator
semigroup $e^{A_{0}t}$ on $\mathbb{H}$ and $B_0$ is admissible for
$e^{A_{0}t}$. It follows from \dref{ep-tz-exp-decay-US-N} and Lemma
\ref{Lem-ABu} that there exist two constant $M_2,\mu_2>0$ such that
\begin{equation} \label{eps-tz-exp-decay-US-N}
   \|(\widetilde{w}(\cdot,t),\widetilde{w}_t(\cdot,t)\| \m\leq\m
   M_2e^{-\mu_2t} \m.
\end{equation}
Next, we claim that the solution of ``$Z$-part'' of
 \dref{th-pf-wZ-part-US-N} is exponentially stable. Set
\begin{equation} \label{Z-wp-sol-US-N}
   Z(x,t) \m=\m \left\{\begin{array}{ll}\disp -\frac{c_2}{1-c_2}
   \widetilde{w}(0,t-x),&t\geq x,\crr\disp Z_0(x-t),&x>t \m.
   \end{array}\right.
\end{equation}
Then a direct computation shows that $Z(x,t)$
solves ``$Z$-part'' of \dref{th-pf-wZ-part-US-N}.  Thus, to show the
exponentially stability of ``$Z$-part'' of \dref{th-pf-wZ-part-US-N},
it suffices to prove that there exist two constants $M_3,\mu_3>0$ such
that
\begin{equation} \label{tw-intval-exp-to0-US-N}
   \int_0^1\widetilde{w}_t^2(0,t-x) \dd x \m\leq\m M_3e^{-\mu_3t} \m.
\end{equation}
{ 
Indeed, \dref{tw-intval-exp-to0-US-N}  can be proved
   by defining  $\rho(t) \m=\m 2\int_0^1 (x-1)\widetilde{w}_t(x,t)\widetilde{w}_x
   (x,t) \dd x \m.$ Since the proof of \dref{tw-intval-exp-to0-US-N} is very similar
   to the proof of \dref{tepsi-intval-exp-to0}, we omit the details.}
Combining \dref{epYtztw-part-to0-US-N},
\dref{eps-tz-exp-decay-US-N}, \dref{Z-wp-sol-US-N} and
the exponential stability of $Z(\cdot,t)$ on $H^1(0,1)$,
 we get   \dref{epYtztw-to0-US-N}.
  \end{proof}

\subsection{Well-posedness and exponential stability of the
            closed-loop system}

We go back to the closed-loop system of \dref{wave-o-US-N} under the
feedback \dref{con-out-US-N}:
\begin{equation} \label{Macron-a}
   \left\{\begin{array}{rl} w_{tt}(x,t) &=\m w_{xx}(x,t),\crr\disp
   w_x(0,t) &=\m -qw_t(0,t),\crr\disp w_x(1,t) &=\m -c_3\widehat
   {w}(1,t)-c_3Z(1,t)-z_x(1,t)+Y_x(1,t)\crr\disp\hspace{1.8cm}
   \m & \ \ \ \ \ -Z_x(1,t)+f(w(\cdot,t),w_t(\cdot,t))+d(t),\crr\disp
   v_{tt}(x,t) &=\m v_{xx}(x,t),\crr\disp v_x(0,t) &=\m -qv_t(0,t) +
   c_1[v(0,t) - w(0,t)],\crr \disp v_x(1,t) &=\m -c_3\widehat{w}(1,t)
   -c_3 Z(1,t)-z_x(1,t)+Y_x(1,t)\crr\disp\hspace{1.8cm} \m & \ \ \ \
   \ -Z_x(1,t)-W_x(1,t), \end{array}\right.
\end{equation}
\begin{equation} \label{Macron-b}
   \left\{\begin{array}{rl} z_{tt}(x,t) \nm &=\m z_{xx}(x,t),\crr
   \disp z_x(0,t) \nm &=\m \frac{c_1}{1-c_0}z(0,t)+\frac{c_0-q}
   {1-c_0} z_t(0,t),\ \ \ z(1,t) =\m -v(1,t)-W(1,t)+w(1,t),\crr\disp
   \widehat{w}_{tt}(x,t) \nm &=\m \widehat{w}_{xx}(x,t),\crr\disp
   \widehat{w}_x(0,t) \nm &=\m -q\widehat{w}_t(0,t)+c_1[\widehat{w}
   (0,t)-w(0,t)],\crr\disp \widehat{w}_x(1,t) \nm &=\m -c_3\widehat
   {w}(1,t)-c_3Z(1,t)-Z_x(1,t),\crr\disp W_t(x,t) \nm &=\m -W_x(x,t),
   \ \ W(0,t) \m=\m -c_0[v(0,t)-w(0,t)],\crr\disp Y_t(x,t) \nm &=\m
   -Y_x(x,t),\ \ Y(0,t) \m=\m -c_0[\widehat{w}(0,t)-w(0,t)],\crr\disp
   Z_t(x,t) \nm &=\m -Z_x(x,t),\ \ Z(0,t) \m=\m-c_2\widehat{w}(0,t)
   \m. \end{array}\right.
\end{equation}
We consider the system \dref{Macron-a}-\dref{Macron-b} in the state
space $\mathscr{H}=\hline^3\times H^1(0,1)\times\hline\times
[H^1(0,1)]^2$.

\begin{theorem} \label{Thm-closed-USN}
Suppose that $\frac{c_1}{1-c_0}>0$, $\frac{c_0-q}{1-c_0}>0$,
$\frac{c_2-q}{1-c_2}>0$ and $c_3>0$. Suppose that $f:\hline\to\rline$
is continuous, and $d\in L^\infty[0,\infty)$ or $d\in L^2[0,\infty)$.
For any initial state $(w_0,w_1,v_0,v_1,z_0,z_1,W_0,\widehat{w}_0,
\widehat{w}_1,Y_0,Z_0)\in\mathscr{H}$ with the compatibility
conditions
$$ \begin{array}{l} -z_0(1)-v_0(1)-W_0(1)+w_0(1) \m=\m 0 \m,\ \ \
   Z_0(0)+c_2\widehat{w}_0(0) \m=\m 0 \m,\crr\disp W_0(0)+c_0[v_0(0)
   -w_0(0)] \m=\m 0,\ \ \ Y_0(0)+c_0[\widehat{w}_0(0)-w_0(0)] \m=\m
   0 \m,\end{array}$$
there exists a unique solution to {\rm\dref{Macron-a}-\dref{Macron-b}}
such that\\ $(w,w_t,v,v_t,z,z_t,W,\widehat{w}_0,\widehat{w}_t,Y,Z)\in
C(0,\infty;\mathscr{H})$ satisfies
\begin{equation} \label{thm-whatwY-decay-US-N}
   \|(w(\cdot,t),w_t(\cdot,t),\widehat{w}(\cdot,t),\widehat{w}_t
   (\cdot,t),Y(\cdot,t),Z(\cdot,t))\|_{\mathbb{H}^2\times
   [H^1(0,1)]^2} \m\leq\m Me^{-\mu t},\ \ t\geq 0,
\end{equation}
with some $M,\mu>0$, and
\begin{equation} \label{thm-vzW-bd-US-N}
   \sup_{t\geq0}\|(v(\cdot,t),v_t(\cdot,t),z(\cdot,t),z_t(\cdot,t),
   W(\cdot,t))\|_{\hline^2\times H^1(0,1)} \m<\m \infty \m.
\end{equation}
If we assume further that  $f(0,0)=0$ and $d\in L^2[0,\infty)$, then
\begin{equation} \label{thm-vzW-bd0-US-N}
   \lim_{t\to\infty} \|(v(\cdot,t),v_t(\cdot,t),z(\cdot,t),z_t
   (\cdot,t),W(\cdot,t))\|_{\hline^2\times H^1(0,1)} \m=\m 0 \m.
\end{equation}
If we assume that $f\equiv 0$ and $d\equiv 0$, then there exist two
constants $M',\mu'>0$ such that
\begin{equation} \label{thm-vzW-bd-decay-US-N}
   \|(v(\cdot,t),v_t(\cdot,t),z(\cdot,t),z_t(\cdot,t),W(\cdot,t))
   \|_{\hline^2\times H^1(0,1)} \m\leq\m M'e^{-\mu' t},\ \ t\geq 0 \m.
\end{equation}
\end{theorem}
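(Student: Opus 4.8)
The plan is to follow the proof of Theorem \ref{thm-closed-US-NC} almost verbatim, adapting it to the negative-damper setting. First I would introduce the variables $\e$, $\beta$ and $\widehat v$ defined in \dref{varep-def-US-N}, \dref{tildez-def-US-N} and \dref{hatv-def-US-N}, together with the transport-based transformation $\widetilde w = \widehat w + Z$ from the auxiliary system preceding \dref{wave-o-obser-trans-tw-US-N}. Substituting these into the closed-loop system \dref{Macron-a}-\dref{Macron-b}, I expect it to decouple into two independent blocks: the ``$(\e,Y,\beta,\widetilde w,Z)$-part'', which is exactly the overall system \dref{Obama}, and the ``$(\widehat v,W)$-part'', which is exactly \dref{wave-o-hatv-P-US-N}, driven only by the total disturbance $f(w,w_t)+d$. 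Since this change of variables is a bounded, boundedly invertible linear map, \dref{Macron-a}-\dref{Macron-b} is well-posed if and only if the decoupled pair is, and the compatibility conditions in the hypothesis translate into those required by Theorem \ref{calADAto0-US-N} and Lemma \ref{Lem-hatz-bd-US-N}.

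Second, I would apply Theorem \ref{calADAto0-US-N} to the ``$(\e,Y,\beta,\widetilde w,Z)$-block'' to obtain constants $M_1,\mu_1>0$ with $\|(\e,\e_t,Y,\beta,\beta_t,\widetilde w,\widetilde w_t,Z)\|_\Xscr\leq M_1 e^{-\mu_1 t}$. Inverting the relations $\widehat w=\widetilde w-Z$ and $w=\widetilde w-Z-\e$ (and their time derivatives) transfers this exponential decay to $(w,w_t,\widehat w,\widehat w_t,Y,Z)$, which is precisely \dref{thm-whatwY-decay-US-N}. In particular $(w,w_t)$ is bounded and tends to zero exponentially.

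Third, I would treat the driven ``$(\widehat v,W)$-block'' via Lemma \ref{Lem-hatz-bd-US-N}. Because $(w,w_t)$ is bounded and $f$ is continuous, $f(w,w_t)\in L^\infty[0,\infty)$, so with $d\in L^\infty[0,\infty)$ or $d\in L^2[0,\infty)$ the lemma yields boundedness of $(\widehat v,\widehat v_t,W)$. If in addition $f(0,0)=0$ and $d\in L^2[0,\infty)$, then $(w,w_t)\to 0$ forces $|f(w,w_t)|\to 0$ and the lemma gives convergence to $0$; if $f\equiv 0$ and $d\equiv 0$ it gives exponential decay. Finally, from $v=\widehat v+w$ and $z=\beta-\widehat v-W$ (using $\beta=z-p$ and $p=-\widehat v-W$), together with $W_t=-W_x$, I would bound $\|(v,v_t)\|_\hline$ and $\|(z,z_t)\|_\hline$ by the already-controlled norms of $(\widehat v,\widehat v_t)$, $(\beta,\beta_t)$, $(W,W_x)$ and $(w,w_t)$, which delivers \dref{thm-vzW-bd-US-N}, \dref{thm-vzW-bd0-US-N} and \dref{thm-vzW-bd-decay-US-N} in the three respective regimes.

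The main obstacle is purely the algebraic bookkeeping in the first step: verifying that the boundary terms $z_x(1,t)$, $Y_x(1,t)$, $Z_x(1,t)$ and $W_x(1,t)$ recombine so that the $(\e,Y,\beta,\widetilde w,Z)$-equations collapse exactly onto \dref{Obama} and the $(\widehat v,W)$-equations onto \dref{wave-o-hatv-P-US-N}, with no residual coupling between the two blocks. Once this is confirmed, all the analytic content, namely the exponential decay and the disturbance estimates, is supplied by Theorem \ref{calADAto0-US-N} and Lemma \ref{Lem-hatz-bd-US-N}, so no fresh estimates are needed.
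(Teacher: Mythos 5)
Your proposal is correct and follows essentially the same route as the paper's own proof: the same change of variables $(\e,\beta,\widehat v,\widetilde w=\widehat w+Z)$, the same decoupling into the $(\e,Y,\beta,\widetilde w,Z)$-block (handled by Theorem \ref{calADAto0-US-N}) and the disturbance-driven $(\widehat v,W)$-block (handled by Lemma \ref{Lem-hatz-bd-US-N}), and the same reconstruction of $(w,\widehat w,v,z)$ from these blocks. The algebraic bookkeeping you flag as the remaining obstacle is exactly what the paper carries out in passing from \dref{Macron-a}-\dref{Macron-b} to \dref{closed-equiv-US-Na}-\dref{closed-equiv-US-Nb}, and it works out with no residual coupling.
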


\begin{proof}
Using the variables $\e(x,t),\beta(x,t)$ and $\widehat{v}
(x,t)$ given by \dref{varep-def-US-N}, \dref{tildez-def-US-N} and
 \dref{hatv-def-US-N}, respectively, and the invertible
transformation $\widetilde{w}(x,t)=\widehat{w}(x,t)+Z(x,t)$,
we can write a system equivalent to \dref{Macron-a}-\dref{Macron-b}
as follows:
\begin{equation} \label{closed-equiv-US-Na}
   \left\{\begin{array}{l} \e_{tt}(x,t) \m=\m \e_{xx}(x,t),\crr
   \disp \e_x(0,t) \m=\m -q\e_t(0,t)+c_1\e(0,t),\ \ \ \e_x(1,t)
   \m=\m \beta_x(1,t)-Y_x(1,t),\crr\disp Y_t(x,t) \m=\m
   -Y_x(x,t),\ \ \ Y(0,t) \m=\m -c_0\e(0,t),\crr\disp
   \beta_{tt}(x,t) \m=\m \beta_{xx}(x,t),\crr\disp
   \beta_x(0,t) \m=\m \frac{c_1}{1-c_0}\beta(0,t)+\frac
   {c_0-q}{1-c_0}\beta_t(0,t),\ \ \ \beta(1,t)
   \m=\m 0,\end{array}\right.
\end{equation}
\begin{equation} \label{closed-equiv-US-Nb}
   \left\{\begin{array}{l} \widetilde{w}_{tt}(x,t) \m=\m
   \widetilde{w}_{xx}(x,t),\crr\disp \widetilde{w}_x(0,t) \m=\m
   \frac{c_2-q}{1-c_2}\widetilde{w}_t(0,t) + c_1\e(0,t),\ \ \
   \widetilde{w}_x(1,t)=-c_3\widetilde{w}(1,t),\crr\disp Z_t(x,t)
   \m=\m -Z_x(x,t),\ \ \ Z(0,t) \m=\m -\frac{c_2}{1-c_2}
   \widetilde{w}(0,t),\crr\disp \widehat{v}_{tt}(x,t) \m=\m
   \widehat{v}_{xx}(x,t),\crr\disp \widehat{v}_x(0,t) \m=\m -q
   \widehat{v}_t(0,t)+c_1\widehat{v}(0,t),\crr\disp \widehat{v}_x

   (1,t) \m=\m -f(w(\cdot,t),w_t(\cdot,t))-d(t)-W_x(1,t),\crr
   \disp W_t(x,t) \m=\m -W_x(x,t),\ \ \ W(0,t) \m=\m -c_0
   \widehat{v}(0,t).\end{array}\right.
\end{equation}
We see that the ``$(\e,\beta,\widetilde{w},Y)$-part'' of
\dref{closed-equiv-US-Na}-\dref{closed-equiv-US-Nb} is independent of
the ``$(\widehat{v},W)$-part'' of \dref{closed-equiv-US-Na}-%
\dref{closed-equiv-US-Nb}. By Theorem \ref{calADAto0-US-N}, there
exist two constants $M_1,\mu_1>0$ such that the solution $(\e,\e_t,Y,
\beta,\beta_t,\widetilde{w},\widetilde{w}_t)\in
C(0,\infty;\Xscr)$ satisfies
\begin{equation} \label{proof-epYtztw-to0-US-N}
   \|(\e(\cdot,t),\e_t(\cdot,t),Y(\cdot,t),\beta(\cdot,t),
   \beta_t(\cdot,t),\widetilde{w}(\cdot,t),\widetilde{w}_t
   (\cdot,t),Z(\cdot,t))\|_{\mathscr{X}}\leq M_1e^{-\mu_1 t}.
\end{equation}
Since $\widehat{w}(x,t)=\widetilde{w}(x,t)-Z(x,t)$ and $\widehat{w}_t
(x,t)=\widetilde{w}_t(x,t)+Z_x(x,t)$, we have that
\begin{equation} \label{proof-hatw-to0-US-N}
   \begin{array}{l} \|(\widehat{w}(\cdot,t),\widehat{w}_t(\cdot,t))
   \|_\hline \leq \|(\widetilde{w}(\cdot,t),\widetilde{w}_t(\cdot,t))
   \|_\hline + \|(Z(\cdot,t),Z_x(\cdot,t))\|_\hline \leq 3M_1
   e^{-\mu_1 t}. \end{array}
\end{equation}
Since $w(x,t)=\widehat{w}(x,t)-\e(x,t)$, $w_t(x,t)=\widehat{w}_t(x,t)
-\e_t(x,t)$, we obtain
\begin{equation} \label{proof-tw-to0-US-N}
   \|(w(\cdot,t),w_t(\cdot,t))\|_\hline \leq \|(\widehat{w}(\cdot,t),
   \widehat{w}_t(\cdot,t))\|_\hline + \|(\e(\cdot,t),\e_t(\cdot,t))
   \|_\hline \leq 4M_1 e^{-\mu_1 t} \m.
\end{equation}
It follows from \dref{proof-epYtztw-to0-US-N},
\dref{proof-hatw-to0-US-N} and \dref{proof-tw-to0-US-N} that
\dref{thm-whatwY-decay-US-N} holds with $M=6M_1$ and $\mu=\mu_1$.

Now we consider the ``$(\widehat{v},W)$-part'' which reads as
\begin{equation}\label{closed-equiv-hatv-W-part-US-N}
\left\{\begin{array}{l}
\widehat{v}_{tt}(x,t)=\widehat{v}_{xx}(x,t),  \crr\disp
\widehat{v}_x(0,t)=-q\widehat{v}_t(0,t)+c_1\widehat{v}(0,t),  \crr\disp
\widehat{v}_x(1,t)=-f(w(\cdot,t),w_t(\cdot,t))-d(t)-W_x(1,t),\crr\disp
W_t(x,t)=-W_x(x,t),\;\;
W(0,t)=-c_0\widehat{v}(0,t).
\end{array}\right.
\end{equation}
Since $f:H^1(0,1)\times L^2(0,1)\to\rline$ is continuous and
$(w,w_t)$ is bounded (since it tends to zero), $f(w(\cdot,t),w_t
(\cdot,t))\in L^\infty[0,\infty)$. Since $d\in L^\infty[0,\infty)$ or
$d\in L^2[0,\infty)$, it follows from Lemma \ref{Lem-hatz-bd-US-N}
that system \dref{closed-equiv-hatv-W-part-US-N} admits a unique
bounded solution, i.e.,
\begin{equation} \label{th-hatv-W-bd-US-N}
   \sup_{t\geq 0} \|(\widehat{v}(\cdot,t),\widehat{v}_t(\cdot,t),
   W(\cdot,t))\|_{\hline\times H^1(0,1)} < \infty \m.
\end{equation}
Noting that $W_t(x,t)=-W_x(x,t)$, it follows from \dref{hatv-def},
\dref{tildez-def} and \dref{th-hatv-W-bd} that
$$ \begin{array}{rl} \|(v(\cdot,t),v_t(\cdot,t))\|_\hline \nm &\leq\m
   \|(\widehat{v}(\cdot,t),\widehat{v}_t(\cdot,t))\|_\hline + \|
   (w(\cdot,t),w_t(\cdot,t))\|_{\hline},\crr\disp \|(z(\cdot,t),z_t
   (\cdot,t))\|_{\hline} \nm &\leq\m \|(\beta(\cdot,t),
   \beta_t(\cdot,t))\|_{\hline} + \|(\widehat{v}(\cdot,t),
   \widehat{v}_t(\cdot,t))\|_{\hline} + \|(W(\cdot,t),W_x(\cdot,t))
   \|_{\hline} ,\end{array}$$
which gives \dref{thm-vzW-bd-US-N}, because both right-hand sides are
bounded.

Now suppose that $f(0,0)=0$ and $d\in L^2[0,\infty)$. By
\dref{proof-tw-to0-US-N} and the continuity of $f$, we have
$\lim_{t\to\infty}|f(w,w_t)|=0$. By Lemma \ref{Lem-hatz-bd-US-N}, we
obtain
\begin{equation} \label{lem-v-W-bd0-th-US-N}
   \lim_{t\to\infty}\|(\widehat{v}(\cdot,t),\widehat{v}_t(\cdot,t),
   W(\cdot,t))\|_{\hline\times H^1(0,1)} \m=\m 0 \m.
\end{equation}
By \dref{proof-epYtztw-to0-US-N}, \dref{proof-tw-to0-US-N},
\dref{th-hatv-W-bd-US-N} and \dref{lem-v-W-bd0-th-US-N},
we derive
$$ \begin{array}{l} \|(v(\cdot,t),v_t(\cdot,t))\|_{\hline} \leq \|
   (\widehat{v}(\cdot,t),\widehat{v}_t(\cdot,t))\|_{\hline}
   + \|(w(\cdot,t),w_t(\cdot,t))\|_{\hline} \to 0,\mbox{ as } t\to
   \infty,\crr\disp \|(z(\cdot,t),z_t(\cdot,t))\|_{\hline} \leq \|
   (\widetilde{z}(\cdot,t),\widetilde{z}_t(\cdot,t))\|_{\hline} +
   \|(\widehat{v}(\cdot,t),\widehat{v}_t(\cdot,t))\|_{\hline} +
   \|(W(\cdot,t),W_x(\cdot,t))\|_{\mathbb{H}} \m,\end{array}$$
which is bounded. Next, suppose that $f\equiv 0$ and $d\equiv 0$.
It follows from Lemma \ref{Lem-hatz-bd-US-N} that there exist two
constants $M_2,\mu_2>0$ such that for all $t\geq 0$,
\begin{equation} \label{lem-v-W-bd-decay-th-US-N}
   \|(\widehat{v}(\cdot,t),\widehat{v}_t(\cdot,t),W(\cdot,t))
   \|_{\hline\times H^1(0,1)} \m\leq\m M_2e^{-\mu_2 t} \m.
\end{equation}
By \dref{proof-epYtztw-to0-US-N}, \dref{proof-tw-to0-US-N} and
\dref{lem-v-W-bd-decay-th-US-N}, we obtain that for all $t\geq 0$,
$$ \|(v(\cdot,t),v_t(\cdot,t))\|_{\mathbb{H}} \leq \|(\widehat{v}
   (\cdot,t),\widehat{v}_t(\cdot,t))\|_{\mathbb{H}} + \|(w(\cdot,t),
   w_t(\cdot,t))\|_{\mathbb{H}}
   \leq M_2e^{-\mu_2 t} + 4M_1e^{-\mu_1 t},$$
$$ \begin{array}{rl} \|(z(\cdot,t),z_t(\cdot,t))\|_{\mathbb{H}}
   &\leq\m \|(\beta(\cdot,t),\beta_t(\cdot,t))
   \|_{\mathbb{H}} + \|(\widehat{v}(\cdot,t),\widehat{v}_t(\cdot,t))
   \|_{\mathbb{H}} + \|(W(\cdot,t),W_x(\cdot,t))\|_{\mathbb{H}}\crr
   \disp &\leq\m M_1e^{-\mu_1 t}+2M_2e^{-\mu_2 t},\end{array}$$
which, combining with \dref{lem-v-W-bd-decay-th-US-N}, implies that
\dref{thm-vzW-bd-decay-US-N} holds.
\end{proof}

\begin{remark} \label{rek-antistablewave} {\rm
Similarly to Remark \ref{rek-unstablewave}, we point out that the
output measurement signals $w(0,t),w(1,t)$ are also almost a minimal
set of measurement signals to exponentially stabilize the system
\dref{wave-o-US-N}. Theorem \ref{Thm-closed-USN} shows that we can
design disturbance estimator and state observer by using
$\{w(0,t),w(1,t)\}$ only and that the system \dref{wave-o-US-N} can be
exponentially stabilized by using $\{w(0,t),w(1,t)\}$ only. However,\\
(a). Each of the observation $\{w(0,t),w(1,t)\}$ is not enough for
exact observability, i.e., for any $T>0$, there is no constant $C_T$
such that
$$ \int_0^T w^2(0,s) \dd s \m\geq\m C_T\|w(\cdot,0),w_t(\cdot,0)
   \|_{\mathbb{H}},\;\;\int_0^T w^2(1,s)\dd s \m\geq\m C_T\|w(\cdot,
   0),w_t(\cdot,0)\|_{\mathbb{H}}.$$
(b). The  $y(t)=w(1,t)$ is also  not enough for exponential
stabilizability. Actually, let $f(w)\equiv 0$, $d(t)=\mu e^{i\mu t}$
and $\phi(x)=\sin\mu(x-1)$, where $\mu$ satisfies $\cosh i\mu=q\sinh
i\mu$. Then, system \dref{wave-o-US-N} admits a solution $(w,w_t)=
(e^{i\mu t}\phi(x),i\mu e^{i\mu t}\phi(x))$ which makes the output
$y(t)=w(1,t)\equiv 0$.

From (a), (b), $w(0,t)$ seems to be necessary to ensure the
possibility of stabilization.  We leave two open question here: (I):
Can we design a state observer for system \dref{wave-o-US-N} using
only $y(t)=w(0,t)$? (II): Is $y(t)=w(0,t)$ only enough to make system
\dref{wave-o-US-N} stabilizable?}
\end{remark}

\section{Concluding remarks}

We have studied the exponential stabilization problem for the one
dimensional unstable or anti-stable wave equation with Neumann
boundary control subject to an unknown bounded disturbance, using
only two measurement signals. We have designed disturbance estimators
that do not use high gain and, based on these, have proposed state
observers. We have shown that the total disturbance is estimated by
the disturbance estimator in the sense that the error is in $L^2[0,
\infty)$, and that the state of the original system is recovered by
the proposed state observer. We have constructed a state observer
based output feedback controller that guarantees that the signals in
the original system are exponentially stable. This is a first output
feedback controller that can exponentially stabilize a system
described by PDEs with both internal uncertainty and external
disturbance. This shows that exponential stability can be achieved
without sliding mode control even for a very general type of
disturbance. Our approach can be generalized to deal with other PDEs
such as unstable/anti-stable wave equation with Dirichlet boundary
control matched with the internal uncertainty and the external
disturbance, again using two measurement signals. We have posed open
questions in Remarks \ref{rek-unstablewave} and
\ref{rek-antistablewave} concerning a stabilizing controller using
only one output measurement signal.


\section{Appendix}

{\bf The proof of Proposition \ref{Pro01}.} As already noted, $\aline$
is skew-adjoint, so that by the theorem of Stone, it generates a
unitary group of operators on $\hline$. In addition, it is not
difficult to show that $\bline_1$ and $\bline_2$ are admissible
control operators for $e^{{\mathbb A}t}$ - the details of all this are
in \cite[Example 5.2]{Nata_etal:16}. Therefore, for any fixed $T>0$,
and for any given $u,d\in L^2_{loc}[0,\infty)$, we have \vspace{-4mm}
\begin{equation}
   \m\ \ \ \int_0^t e^{\aline(t-s)}\bline_2[u(s)+d(s)] \dd s \m\in\m
   C(0,T;\hline).
\end{equation}

For any fixed $T>0$, we define on $C(0,T;\hline)$ the norm
\begin{equation} \label{norm-def}
   \|(\phi,\psi)\|_* \m=\m \sup_{t\in[0,T]} e^{-\l t}\|(\phi(\cdot,t),
   \psi(\cdot,t))\|_{\mathbb{H}} \m,
\end{equation}
where $\l$ is a positive constant to be determined later. It is
obvious that $C(0,T;\hline)$ with $\|\cdot\|_*$ is a Banach space.
Define the nonlinear map $\fline$ from $C(0,T;\hline)$ to
$C(0,T;\hline)$ by \vspace{-2mm}
$$ \fline \bbm{\varphi\\ \psi}(t) \m=\m e^{\aline t}
   \bbm{w_0\\ w_1} + \int_0^t e^{\aline(t-s)}\bline_2
   [u(s)+d(s)] \dd s \hspace{30mm} \m$$
\begin{equation} \label{F-map-def}
   \m\ \ \ -\int_0^t e^{\aline(t-s)}\bline_1((q+1) \varphi(0,s))\dd s
   + \int_0^t e^{\aline(t-s)}\bline_2 f(\varphi(\cdot,s),\psi
   (\cdot,s)) \dd s \m.
\end{equation}
We show that $\fline$ is a strict contraction on $C(0,T;\hline)$.
Indeed, since $f$ satisfies global Lipschitz condition in $H^1(0,1)
\times L^2(0,1)$, there exists a constant $L>0$ such that
\begin{equation} \label{f-Lip-con-E}
   |f(\phi_1,\psi_1)-f(\phi_2,\psi_2)| \m\leq\m L\|(\phi_1,\psi_1)
   -(\phi_2,\psi_2)\|_{\hline}.
\end{equation}
The admissibility of $\bline_1$ implies that for all $t>0$, there
exists $C_{1t}>0$ such that
$$ \left\|\int_0^t e^{\aline(t-s)}\bline_1(\varphi_1(0,s)-
   \varphi_2(0,s)) \dd s\right\|_{\hline}^2 \leq C_{1t}\|
   \varphi_1(0,s)-\varphi_2(0,s)\|_{L^2[0,t]}^2 \m.$$
From \cite[Proposition 2.3]{art01} we know that $C_{1t}$
is nondecreasing in $t$, hence $C_{1t}\leq C_{1T}$ for any $t\in
[0,T]$. It is easy to see from \dref{Eliel} that $|\varphi(0)|\leq
\|(\varphi,\psi)\|_\hline$ holds for all $(\varphi,\psi)\in\hline$.
Thus, for any $(\varphi_1,\psi_1),\m (\varphi_2,\psi_2)\in C(0,T;
\hline)$, \vspace{-2mm}
\begin{equation} \label{admiss-pro-T}
   \left\|\int_0^te^{\aline(t-s)} \bline_1(\varphi_1(0,s)-\varphi_2
   (0,s)) \dd s\right\|_\hline^2 \leq C_{1T} \int_0^t \left\|\sbm
   {\varphi_1(\cdot,s)\\ \psi_1(\cdot,s)} - \sbm{\varphi_2(\cdot,s)\\
   \psi_2(\cdot,s)}\right\|_{\hline} \dd s \m.
\end{equation}
Similarly to the above, by \dref{f-Lip-con-E}, the admissibility of
$\bline_2$ implies that for all $t>0$, \vspace{-2mm}
$$ \begin{array}{l}\disp \left\|\int_0^t e^{\aline(t-s)}\bline_2\left[
   f(\sbm{\varphi_1(\cdot,s)\\ \psi_1(\cdot,s)})-f(\sbm{\varphi_2
   (\cdot,s)\\ \psi_2(\cdot,s)} \right] \dd s\right\|_\hline^2\crr
   \disp \m\ \leq C_{2T}\left\|f(\sbm{\varphi_1(\cdot,s)\\ \psi_1
   (\cdot,s)})-f(\sbm{\varphi_2(\cdot,s)\\ \psi_2(\cdot,s)})\right\|
   _{L^2[0,t]}^2 \leq C_{2T}L^2\int_0^t \left\|\sbm{\varphi_1(\cdot,s)
   \\ \psi_1(\cdot,s)}-\sbm{\varphi_2(\cdot,s)\\ \psi_2(\cdot,s)}
   \right\|_\hline^2 \dd s \m.\end{array}$$
It follows from here and \dref{F-map-def}, \dref{admiss-pro-T} that
for any $(\varphi_1,\psi_1),(\varphi_2,\psi_2)\in C(0,T;\hline)$,
$$ \begin{array}{l}\disp \left\|\fline\sbm{\varphi_1\\ \psi_1}(t) -
   \fline\sbm{\varphi_2\\ \psi_2}(t)\right\|_\hline^2 \leq \left(
   C_{1T}(q+1)^2 + C_{2T} L^2\right) \int_0^t \left\|\sbm{\varphi_1
   (\cdot,s)\\ \psi_1(\cdot,s)} - \sbm{\varphi_2(\cdot,s)\\ \psi_2
   (\cdot,s)}\right\|_\hline^2 \dd s\crr\disp \m\ \ = \left(C_{1T}
   (q+1)^2+C_{2T}L^2 \right) \int_0^t e^{2\l s}e^{-2\l s}\left\|\sbm
   {\varphi_1(\cdot,s)\\ \psi_1(\cdot,s)} - \sbm{\varphi_2(\cdot,s)\\
   \psi_2(\cdot,s)}\right\|_\hline^2 \dd s\crr\disp \m\ \ \leq \left(
   C_{1T}(q+1)^2 + C_{2T}L^2\right) \m \frac{e^{2\l t}-1}{2\l}\left
   \|\sbm{\varphi_1\m\\ \psi_1\sbluff}-\sbm{\varphi_2\m\\ \psi_2
   \sbluff} \right\|_*^2 \FORALL t\in [0,T] \m. \end{array}$$
Choose $\l>\half[C_{1T}(q+1)^2+C_{2T}L^2]$ in \dref{norm-def}, then
the above estimate implies that $\fline$ is a strict contraction on
$C(0,T;\hline)$. By the contraction mapping theorem (see, for
instance, \cite{BrSc:09}), \dref{F-map-def} has a unique fixed point
$(\phi,\psi)\in C(0,T;\hline)$, which is then a solution of
\dref{Trump} in $[0,T]$, which implies that $\psi=\varphi_t$. Since
the above reasoning works for any $T>0$, \dref{wave-o} admits a unique
global solution. \hfill $\Box$


\begin{thebibliography}{00}

\bibitem{BP-Krstic2014} D. Bresch-Pietri and M. Krstic, \m
 Output-feedback adaptive control of a wave PDE with boundary
 anti-damping, {\it Automatica} {\bf 50} (2014), 1407-1415.

\bibitem{BrSc:09}
 R.M. Brooks and K. Schmitt, \m The contraction mapping principle
 and some applications, {\em Electronic J. Differential Equations}
 {\bf 354} (2009), 1-90.

\bibitem{CIB2000tac} C.I. Byrnes,  I.G. Lauk\'{o}, D.S. Gilliam and
 V.I. Shubov, \m Output regulation for linear distributed parameter
 systems, {\it IEEE Trans. Automat. Control} {\bf 45} (2000),
 2236-2252.

\bibitem{ChengG1979} G. Chen,  Energy decay estimates and exact boundary
value controllability for the wave equation in a boundaed domain,
{\it J. Math. pures et appl.}, {\bf 58} (1979), 249-273.

\bibitem{MCheng2011auto} M.B. Cheng, V. Radisavljevic and W.C. Su,
 \m Sliding mode boundary control of a parabolic PDE system with
 parameter variations and boundary uncertainties, {\it Automatica}
 {\bf 47} (2011), 381-387.

\bibitem{De2004scl} M.A. Demetriou, \m Second order observers for
 second order distributed parameter systems. {\it Systems Control
 Letters} {\bf 51} (2004), 225-234.

\bibitem{Deu2015auto} J. Deutscher, \m A backstepping approach to the
 output regulation of boundary controlled parabolic PDEs, {\it
 Automatica} {\bf 57} (2015), 56-64.

\bibitem{Deu2015tac} J. Deutscher, \m Backstepping design of robust
 output feedback regulators for boundary controlled parabolic PDEs,
 {\it IEEE Trans. Automat. Control} {\bf 61} (2016), 2288-2294.

\bibitem{FansonVmeas} J.L. Fanson, \m {\em An Experimetal
 Investigation of Vibration Suppression in Large Space Structures
 Using Position Positive Feedback}, Ph.D dissertation, California
 Inst. Technology, Pasadena, CA, 1987.

\bibitem{FG2016} H. Feng and B.Z. Guo, \m A high gain free
 extended state observer to output feedback stabilization of
 one-dimensional unstable wave equation, {\it Proc. Chinese Control
 Conf.} 2016, 1383-1388.

\bibitem{FHGBZ} H. Feng and B.Z. Guo, \m A new active disturbance
 rejection control to output feedback stabilization for a
 one-dimensional anti-stable wave equation with disturbance, {\it
 IEEE Trans. Automatic Control}, to appear.

\bibitem{F-Gtac} H. Feng and B.Z. Guo, \m Observer design and
 exponential stabilization for wave equation in energy space by
 boundary displacement measurement only, {\it IEEE Trans. Automat.
 Control} {\bf 62} (2017), 1438-1444.

\bibitem{GeHeW} S.S. Ge, S. Zhang and W. He, \m Vibration control of
 an Euler-Bernoulli beam under unknown spatiotemporally varying
 disturbance, {\it Int. J. Control} {\bf 84} (2011), 947-960.

\bibitem{GWGBZ2013tac} W. Guo and B.Z. Guo, \m Parameter estimation
 and non-collocated adaptive stabilization for a wave equation
 subject to general boundary harmonic disturbance, {\it IEEE
 Trans. Automat. Control} {\bf 58} (2013), 1631-1643.

\bibitem{GuoGuo2009auto} B.Z. Guo and W. Guo, \m The strong
 stabilization of a one-dimensional wave equation by
 non-collocated dynamic boundary feedback control, {\it
 Automatica} {\bf 44} (2009), 790-797.

\bibitem{JFF2013tac} B.Z. Guo and F.F. Jin, \m Sliding mode and
 active disturbance rejection control to stabilization of
 one-dimensional anti-stable wave equations subject to disturbance
 in boundary input, {\it IEEE Trans. Automat. Control} {\bf 58}
 (2013), 1269-1274.

\bibitem{JFF2015tac} B.Z. Guo and F.F. Jin, \m Output feedback
 stabilization for one-dimensional wave equation subject to
 boundary disturbance, {\it IEEE Trans. Automat. Control} {\bf 60}
 (2015), 824-830.

\bibitem{GuoXu2007} B.Z. Guo and C.Z. Xu, The stabilization of a
 one-dimensional wave equation by boundary feedback with
 noncollocated observation, {\it IEEE Trans. Automatic Control}
 {\bf 52} (2007), 371-377.

\bibitem{Han1998} J.Q. Han, \m From PID to active disturbance
 rejection control, {\it IEEE Trans. Ind. Electronics} {\bf 56}
 (2009), 900-906.


\bibitem{Jacob2016} B. Jacob, R. Nabiullin, J.R. Partington and
 F.L. Schwenninger, \m Infinite-dimensional input-to-state stability
 and Orlicz spaces, submitted in 2016, available on arXiv.

\bibitem{Krst2010MA} M. Krstic, Adaptive control of an anti-stable
 wave PDE, {\it Dyn. Contin. Discrete Impuls. Systems, Ser. A. Math.
 Analysis} {\bf 17} (2010), 853-882.

\bibitem{KGBS:08} M. Krstic, B.Z. Guo, A. Balogh and A. Smyshlyaev,
 \m Output-feedback stabilization of an unstable wave equation,
 {\it Automatica} {\bf 44} (2008), 63-74.

\bibitem{LogemannTownley97siam} H. Logemann and S. Townley, \m
 Low-gain control of uncertain regular linear systems,  {\it SIAM J.
 Control Optim.} {\bf 35} (1997), 78-116.

\bibitem{Nata_etal:14} V. Natarajan, D. Gilliam and G. Weiss, \m
 The state feedback regulator problem for regular linear systems,
 {\it IEEE Trans. Automatic Control} {\bf 59} (2014), 2708-2723.

\bibitem{Nata_etal:16} V. Natarajan, H.C. Zhou, G. Weiss and
 E. Fridman, \m Exact controllability of a class of nonlinear
 distributed parameter systems using back and forth iterations,
 {\it Internat. J. Control.}, to appear (published online Dec.
 2016).

\bibitem{Pazy} A. Pazy,  {\it Semigroups of Linear Operators and
 Applications  to Partial Differential Equations}, Springer-Verlag,
 New York, 1983.

\bibitem{PauPoh:14} L. Paunonen and  S. Pohjolainen, \m The internal
 model principle for systems with unbounded control and observation,
 {\it SIAM J. Control Optim.} {\bf 52} (2014), 3967-4000.

\bibitem{Orlov2011siam} A. Pisano, Y. Orlov and E. Usai, \m Tracking
 control of the uncertain heat and wave equation via power-fractional
 and sliding-mode techniques, {\it SIAM J. Control Optim.} {\bf 49}
 (2011), 363-382.


\bibitem{ReWe:2003} R. Rebarber and G. Weiss, \m Internal model based
 tracking and disturbance rejection for stable well-posed systems,
 {\it Automatica} {\bf 39} (2003), 1555-1569.

\bibitem{RW-sto} E. Rosseel and G.N. Wells, \m Optimal control with
 stochastic PDE constraints and uncertain controls, {\it Comput.
 Methods Appl. Mech. Eng.} {\bf 213/216} (2012), 152-167.

\bibitem{AS-Krstic-book} A. Smyshlyaev and M. Krstic, {\it Adaptive
 Control of Parabolic PDEs}, Princeton University Press, Princeton,
 NJ, 2010.

\bibitem{StWe:04} O.J. Staffans and G.
 Weiss, \m Transfer functions of regular linear systems. Part III:
 inversions and duality, {\it Integral Equations and Oper. Theory}
 {\bf 49} (2004), 517-558.

\bibitem{obs_book} M. Tucsnak and G. Weiss, \m {\it Observation and
 Control for Operator Semigroups}, Birkh\"auser, Basel, 2009.

\bibitem{WLRCauto} J.M. Wang, J.J. Liu, B. Ren and J. Chen,
 \m Sliding mode control to stabilization of cascaded heat PDE-ODE
 systems subject to boundary control matched disturbance,
 {\it Automatica} {\bf 52} (2015), 23-34.

\bibitem{art01} G. Weiss, \m Admissibility of unbounded control
 operators, {\it SIAM J. Control Optim.} {\bf 27} (1989), 527-545.


\bibitem{ZhWe:2016} H.C. Zhou and G. Weiss, \m The regulation problem
 for the one-dimensional Schr\"{o}dinger equation via the
 backstepping approach, {\it Proc. Internat. Conf. on the Science
 of Electrical Engineering} (ICSEE), Eilat, Israel, Nov. 2016.

\bibitem{ZhouWeiss_IFAC:2017} H.C. Zhou and G. Weiss, \m Output
 feedback exponential stabilization of a nonlinear 1-D wave equation
 with boundary input, {\em Proc. of the IFAC World Congress}, Toulouse,
 France, July 2017.

\end{thebibliography}
\end{document}